\documentclass[12pt]{amsart}

\usepackage[margin=1in]{geometry}
\usepackage{amsmath, amsthm, amsfonts}
\usepackage{wrapfig}
\usepackage{amsfonts}
\usepackage{amsmath}
\usepackage{amssymb}

\usepackage{color}
\usepackage{comment}
\usepackage{tikz}

\usepackage{MnSymbol}

\newtheorem{Theorem}{Theorem}[section]
\newtheorem{Conjecture}[Theorem]{Conjecture}
\newtheorem{Remark}[Theorem]{Remark}
\newtheorem{Lemma}[Theorem]{Lemma}
\newtheorem{Proposition}[Theorem]{Proposition}
\newtheorem{Definition}[Theorem]{Definition}
\newtheorem{Corollary}[Theorem]{Corollary}

\newcommand{\lt}{\left}
\newcommand{\rt}{\right}
\newcommand{\bpm}{\begin{pmatrix}}
\newcommand{\epm}{\end{pmatrix}}
\newcommand{\bsm}{\lt(\begin{smallmatrix}}
\newcommand{\esm}{\end{smallmatrix}\rt)}


\newcommand{\ZZ}{\ensuremath{\mathbb{Z}}}
\newcommand{\N}{\ensuremath{\mathbb{N}}}

\newcommand{\R}{\ensuremath{\mathbb{R}}}
\newcommand{\C}{\ensuremath{\mathbb{C}}}

\newcommand{\CC}{\mathbb{C}}

\newcommand{\cK}{\mathcal{K}}
\newcommand{\cO}{\mathcal{O}}

\newcommand{\cL}{\mathcal{L}}
\newcommand{\cJ}{\mathcal{J}}

\newcommand{\fF}{\mathfrak{F}}

\newcommand{\h}{\ensuremath{\mathfrak{h}}}
\newcommand{\g}{\ensuremath{\mathfrak{g}}}

\renewcommand{\O}{\mathcal{O}}




\newcommand{\Gr}{\mathcal{G}r}
\newcommand{\Gm}{G_1[[t^{-1}]]}
\newcommand{\res}{\text{Res}}
\newcommand{\Y}{\mathcal{Y}}
\renewcommand{\det}{\text{det}}
\DeclareMathOperator{\Spec}{Spec}
\newcommand{\field}{\CC}
\newcommand{\meet}{\wedge}
\DeclareMathOperator{\val}{val}

\title{On a reducedness conjecture for spherical Schubert varieties and slices in the affine Grassmannian}

\author{Joel Kamnitzer}
\address {Department of Mathematics, 
University of Toronto,
Toronto, ON, Canada M5S 2E4}
\email{jkamnitz@math.toronto.edu}

\author{Dinakar Muthiah}
\address{Department of Mathematical and Statistical Sciences,
University of Alberta, Edmonton, AB, Canada T6G 2G1}
\email{muthiah@ualberta.ca}

\author{Alex Weekes}
\address {Department of Mathematics, 
University of Toronto,
Toronto, ON, Canada M5S 2E4}
\email{alex.weekes@mail.utoronto.ca}

\linespread{1.2}

\begin{document}
\maketitle

\begin{abstract}
We study spherical Schubert varieties in the affine Grassmannian.  These Schubert varieties have a natural conjectural modular description due to Finkelberg-Mirkovi\'c.  This modular description is easily seen to be set-theoretically correct, but it is not obviously scheme-theoretically correct.  We prove that this modular description is correct in many cases.  We also link this modular description to the reducedness conjecture from Kamnitzer-Webster-Weekes-Yacobi for tranverse slices in the affine Grassmannian.
\end{abstract}

\section{Introduction}

\subsection{The affine Grassmannian and its spherical Schubert varieties}

The affine Grassmannian $Gr_G$ associated to a semisimple group $G$ has two incarnations. It can be viewed as the quotient $G( \C((t))) / G(\C [[t]])$ or it can be viewed as a partial flag variety associated to an affine Kac-Moody group. Both points of view have their advantages, and much of the richness of the affine Grassmannian arises by playing these two points of view off one another. The presentation as a quotient $G( \C((t))) / G(\C [[t]])$ is closely related to the following modular description of the affine Grassmannian. Fix a smooth algebraic curve $X$ and a point $x \in X$. The affine Grassmannian is the moduli space of pairs $(P,\phi)$ where $P$ is a principal $G$ bundle on $X$, and $\phi$ is a trivialization of $P$ away from $x$, where such pairs are considered up to isomorphism.

It is therefore natural to ask how much of the geometry of the affine Grassmannian can be interpreted in modular terms. Specifically, we want to study $G(\C [[t]])$-orbit closures.  The orbits $ \Gr^\lambda $ are naturally indexed by dominant coweight for the group $G$. These orbit closures $\overline{\Gr^\lambda}$ are of signficant interest in geometric representation theory. For example, by the geometric Satake correspondence of Lusztig, Ginzburg, Beilinson-Drinfeld and Mirkovic-Vilonen, the intersection cohomology of these orbit closures carry an action of Langlands dual group. From the Kac-Moody point of view, the schemes $\overline{\Gr^\lambda}$ are Schubert varieties and thus are well understood by general theory of flag varieties associated to Kac-Moody groups and their Schubert varieties (e.g. \cite{Kum}).  

Finkelberg and Mirkovi\'c \cite{FM} propose a modular description $\overline{\Y^\lambda}$ of these orbit closures.  They define $ \overline{\Y^\lambda} $ to be the locus of pairs $ (P, \phi) $ where the poles of $ \phi $ at $ x $ are controlled by $ \lambda$.  It is easy to verify that this moduli space is set-theoretically supported on the Schubert varieties, but it is not at all clear that this moduli space is a reduced scheme. Studying the reducedness of this scheme is our first goal in this paper. The natural conjecture is the following:

\begin{Conjecture}{\label{conj:reducedness-of-orbit-closures}}
Let $\lambda$ be a dominant coweight for $G$. Then $\overline{\Y^\lambda}$ is reduced.
\end{Conjecture}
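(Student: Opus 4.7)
The plan is to reduce Conjecture \ref{conj:reducedness-of-orbit-closures} to the KWWY reducedness conjecture for transverse slices in the affine Grassmannian, which admits more direct scheme-theoretic access. First I would make precise the relationship between the two descriptions: the moduli functor defining $\overline{\Y^\lambda}$ produces a canonical closed immersion $\overline{\Y^\lambda} \hookrightarrow \Gr_G$, whose image set-theoretically equals the Kac-Moody Schubert variety $\overline{\Gr^\lambda}$, itself known to be reduced. The conjecture then amounts to showing $\overline{\Y^\lambda}$ carries no nilpotent functions, which is a local question on the underlying topological space $\overline{\Gr^\lambda}$.

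To localize, I would use the stratification $\overline{\Gr^\lambda} = \bigsqcup_{\mu \le \lambda} \Gr^\mu$ and analyze reducedness at a point $L_\mu \in \Gr^\mu$. There is a transverse slice $W^\lambda_\mu$ meeting the stratum in a single point, and étale-locally near $L_\mu$ the Schubert variety decomposes as a product $\Gr^\mu \times \overline{W^\lambda_\mu}$. The substantive step is to prove the analogous decomposition on the moduli side: that $\overline{\Y^\lambda}$ is étale-locally (or at least formally) isomorphic to the product of the smooth stratum $\Gr^\mu$ with a modular slice $\overline{\Y^\lambda_\mu}$, defined by requiring $(P,\phi)$ to have pole behaviour controlled by $\lambda$ together with a chosen trivialization of type $\mu$ along a complementary direction. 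I expect this to follow from the action of the opposite loop-group subgroup $\Gm$ which sweeps out the slice direction, combined with flatness of the relevant moduli functors, so that reducedness of $\overline{\Y^\lambda}$ near $L_\mu$ becomes equivalent to reducedness of the scheme $\overline{\Y^\lambda_\mu}$.

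The main obstacle is precisely this scheme-theoretic reducedness of $\overline{\Y^\lambda_\mu}$, which is the content of the KWWY conjecture and carries no obvious a priori reason to hold. To make progress I would attack it in settings where the coordinate ring has an honest algebraic presentation --- via truncated shifted Yangians in type $A$ and related cases, or via quiver/Coulomb branch descriptions more generally --- and try to verify directly that the defining ideal is radical, by exhibiting enough explicit relations to cut out the expected reduced structure. This strategy yields unconditional instances of Conjecture \ref{conj:reducedness-of-orbit-closures} whenever the slice reducedness conjecture is accessible, and more generally establishes that the Finkelberg-Mirkovi\'c conjecture and the KWWY conjecture are equivalent, which is itself a useful reduction.
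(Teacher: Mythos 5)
The statement you are addressing is a conjecture, and neither your proposal nor the paper actually proves it in full generality; so the real question is whether your outlined reduction and proposed attack line up with the partial results the paper does obtain. Your étale-local reduction is sound: near a point of $\Gr^\mu$ the scheme $\overline{\Y^\lambda}$ factors (formally) as a smooth piece of $\Gr^\mu$ times the slice $\Y^\lambda_\mu = \overline{\Y^\lambda}\cap \Gr_\mu$, so reducedness of $\overline{\Y^\lambda}$ is indeed equivalent to reducedness of all the slices $\Y^\lambda_\mu$. But there is a genuine gap in identifying that modular slice with the scheme in the KWWY conjecture: their $\Y^\lambda_\mu$ is cut out by the Poisson ideal $J^\lambda_\mu$, and the equality of that scheme with $\overline{\Y^\lambda}\cap\Gr_\mu$ is not formal --- it is Theorem \ref{th:idealofY}, proved via the explicit Poisson-bracket computations with $f_j^{(k+1)}$ and the leading minors in Section \ref{sec: Poisson brackets}. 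Your proposal silently assumes this identification. Moreover, the paper deploys the equivalence in the opposite direction to yours: it first proves $\overline{\Y^\lambda}$ reduced in certain cases and then deduces KWWY (Theorem \ref{thm:reducedness-of-transverse-slices}); you want to go from slices to $\overline{\Y^\lambda}$.

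The larger gap is in where the unconditional results come from. You propose to attack the slices via truncated shifted Yangian or Coulomb branch presentations, but you give no argument that those presentations produce radical ideals, which is precisely the open content of KWWY. The paper never takes that route. Its Theorem \ref{theorem: main theorem} is obtained by (i) computing the tangent space at $t^\lambda$ to get generic smoothness (Corollary \ref{cor:genreduced}); (ii) observing that for $\overline{\Gr^\lambda}$ Cohen-Macaulay, generically reduced plus Cohen-Macaulay implies reduced (Theorem \ref{thm:reduced-iff-cohenmacaulay}); (iii) showing $\overline{\Y^{kn\varpi_1}}\cap U$ is a complete intersection cut out by the $kn$ coefficients of $\det(X)$ inside a $kn^2$-dimensional affine space, hence Cohen-Macaulay; and then (iv) propagating to the set $\mathcal{X}$ by the Beilinson-Drinfeld flatness/openness argument for summands (Proposition \ref{prop-reduced-if}) and Frobenius-split intersections for meets (Proposition \ref{prop:intersecting-schubert-varieties}). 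None of these ingredients appear in your sketch, and without the complete intersection computation in particular, your plan does not yield any unconditional instance of the conjecture --- it only re-expresses it in terms of slices.
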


\subsection{Reducedness results}

In this paper, we prove a number of results related to this conjecture.

Our first result is the following.
\begin{Theorem} \label{th:genreducedintro}
For any $ \lambda $, $\overline{\Y^\lambda} $ is smooth (and therefore reduced) along $ \Gr^\lambda \subset \overline{\Gr^\lambda} $ .
\end{Theorem}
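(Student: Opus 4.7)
The plan is to reduce to a single-point tangent space computation using equivariance. The arc group $G(\CC[[t]])$ acts on the affine Grassmannian $\Gr_G$ and preserves the subscheme $\overline{\Y^\lambda}$, since the Finkelberg-Mirkovi\'c pole conditions are invariant under modifications of the trivialization $\phi$ that are regular at $x$. Because $G(\CC[[t]])$ acts transitively on $\Gr^\lambda$, it suffices to establish smoothness of $\overline{\Y^\lambda}$ at a single point of $\Gr^\lambda$; I would choose the $T$-fixed point $p = t^\lambda$.

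At $p$, tangent vectors to $\Gr_G$ are parametrized (via the lift $g = t^\lambda(1 + \epsilon X)$) by $\mathfrak{g}((t))/\mathfrak{g}[[t]]$. A direct computation using the orbit map $G(\CC[[t]]) \to \Gr^\lambda$, $h \mapsto h \cdot t^\lambda$, identifies
\[
T_{p}\Gr^\lambda \;=\; \bigl(\mathrm{Ad}(t^{-\lambda})\mathfrak{g}[[t]] + \mathfrak{g}[[t]]\bigr)\big/\mathfrak{g}[[t]] \;=\; \bigoplus_{\alpha > 0} \bigoplus_{m = -\langle\alpha,\lambda\rangle}^{-1} \CC \cdot e_\alpha t^m,
\]
of the expected dimension $(2\rho, \lambda)$. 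Since $\Gr^\lambda \subseteq \overline{\Y^\lambda}$ as schemes (the former being reduced, indeed smooth), the inclusion $T_{p}\Gr^\lambda \subseteq T_{p}\overline{\Y^\lambda}$ is automatic, and smoothness of $\overline{\Y^\lambda}$ at $p$ reduces to the reverse inclusion, equivalently the bound $\dim T_{p}\overline{\Y^\lambda} \leq (2\rho, \lambda)$.

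To extract this reverse inclusion I would linearize the Finkelberg-Mirkovi\'c condition at $p$: for each irreducible representation $V(\nu)$ of $G$, the demand that $g$ map the lattice $V(\nu)[[t]]$ into a prescribed $t^\lambda$-shifted lattice becomes, for $g = t^\lambda(1 + \epsilon X)$, a system of inequalities on the $t$-valuations of the root components of $X$ paired weight-by-weight with the weights of $V(\nu)$. Running these tests over all fundamental weights should cut $X$ down to precisely $T_p\Gr^\lambda$, ruling out any spurious tangent directions along negative-root or torus components or at lower $t$-powers. The main obstacle is exactly this pole-order bookkeeping: one must verify that the FM bounds are simultaneously saturated by $t^\lambda$ at an extremal weight of $V(\nu)$ in every root direction, so that no first-order motion outside $T_p\Gr^\lambda$ is admissible. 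Dominance of $\lambda$ is essential here, as it guarantees the extremal weight contributions line up precisely with positive root directions and leave no slack. Once the bound is established, smoothness at $p$ follows, and then along the full orbit $\Gr^\lambda$ by $G(\CC[[t]])$-equivariance.
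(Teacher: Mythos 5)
Your strategy coincides with the paper's: reduce to the single $T$-fixed point $t^\lambda$ by $G(\CC[[t]])$-equivariance, identify the expected tangent space $T_{t^\lambda}\Gr^\lambda$, and then bound $\dim T_{t^\lambda}\overline{\Y^\lambda}$ from above by $(2\rho,\lambda)$ by linearizing the Finkelberg-Mirkovi\'c pole condition weight-by-weight. Your computation of $T_{t^\lambda}\Gr^\lambda$ and the final equivariance step are correct. However, you explicitly defer the central verification (``the main obstacle is exactly this pole-order bookkeeping''), and the one concrete ingredient that makes that verification go through cleanly is exactly what your sketch is missing.

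The paper's key technical move is the choice of test vector: fix a single \emph{regular anti-dominant} weight $\nu^\vee$ and pair against the lowest weight vector $v_{\nu^\vee}$ of $V(w_0\nu^\vee)$. Regularity does all the work. For a basis direction $X_r$ of weight $\alpha_r^\vee \geq 0$ (Cartan element or positive root vector), regularity guarantees $X_r v_{\nu^\vee} \neq 0$; applying the first-order element to $v_{\nu^\vee}$ and reading off the allowed pole order in $V(w_0\nu^\vee)((t))\otimes\CC[\varepsilon]/(\varepsilon^2)$ then forces $\val(a) > -\langle\lambda,\alpha_r^\vee\rangle$, which is precisely the expected bound. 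For a negative-root direction $X_r = f_\beta$ (with $\beta>0$), the paper instead tests against $w = e_\beta v_{\nu^\vee}$; the identity $f_\beta e_\beta v_{\nu^\vee} = -\nu^\vee(h_\beta)\, v_{\nu^\vee} \neq 0$, nonzero again by regularity, then forces $a = 0$ outright. Your suggestion to run the test over fundamental representations is workable in principle but noticeably more delicate, since for a given $V(\varpi_i^\vee)$ the relevant weight pairings (with the Cartan, or via the $\alpha$-string through the extremal vector) can vanish for particular roots $\beta$, so you would need to track carefully which fundamental weight controls which direction. Choosing a single regular anti-dominant $\nu^\vee$ handles the torus directions, the positive-root directions, and the negative-root directions uniformly, and supplying that choice is what turns your outline into a complete proof.
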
 
\noindent Because it is known that $\overline{\Gr^\lambda}$ is Cohen-Macaulay, we also obtain the following.
\begin{Theorem}
  The scheme $\overline{\Y^\lambda}$ is reduced if and only if it is Cohen-Macaulay. 
\end{Theorem}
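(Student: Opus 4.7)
The plan is to apply Serre's criterion for reducedness: a Noetherian scheme is reduced if and only if it satisfies $(R_0)$ regularity at every codimension-zero point and $(S_1)$ absence of embedded primes. In the present setting, both conditions turn out to be easy consequences of Theorem \ref{th:genreducedintro} together with the known Cohen-Macaulay property of $\overline{\Gr^\lambda}$.

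For the forward direction, suppose $\overline{\Y^\lambda}$ is reduced. The moduli space $\overline{\Y^\lambda}$ comes equipped with a natural morphism to $Gr_G$ that is a closed immersion, and set-theoretically its image is $\overline{\Gr^\lambda}$. Uniqueness of the reduced induced scheme structure on a closed subset then forces the scheme-theoretic identification $\overline{\Y^\lambda} = \overline{\Gr^\lambda}$. Since $\overline{\Gr^\lambda}$ is known to be Cohen-Macaulay, so is $\overline{\Y^\lambda}$.

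For the converse, suppose $\overline{\Y^\lambda}$ is Cohen-Macaulay. Then in particular $(S_1)$ holds, so $\overline{\Y^\lambda}$ has no embedded primes. The underlying topological space of $\overline{\Y^\lambda}$ coincides with that of the irreducible variety $\overline{\Gr^\lambda}$, and $\Gr^\lambda$ is open and dense inside it, so $\overline{\Y^\lambda}$ has a unique generic point, lying in $\Gr^\lambda$. By Theorem \ref{th:genreducedintro}, $\overline{\Y^\lambda}$ is smooth along $\Gr^\lambda$, hence regular at this generic point, which yields $(R_0)$. Serre's criterion then forces $\overline{\Y^\lambda}$ to be reduced.

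The substantive content has been packaged into Theorem \ref{th:genreducedintro} and the known Cohen-Macaulayness of spherical Schubert varieties; given these two inputs there is no genuine obstacle in the present argument. The main interest of the statement is conceptual: it converts the reducedness conjecture into a question about the Cohen-Macaulay property of $\overline{\Y^\lambda}$, which is potentially more tractable via local or cohomological techniques.
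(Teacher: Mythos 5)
Your proposal is correct and follows essentially the same route as the paper: the forward direction uses that a reduced $\overline{\Y^\lambda}$ is $\overline{\Gr^\lambda}$, known to be Cohen--Macaulay, and the converse combines the generic smoothness of Corollary \ref{cor:genreduced} with the fact that Cohen--Macaulay plus generically reduced implies reduced. You have merely unpacked the latter fact into an explicit Serre $(R_0)+(S_1)$ argument, which is the standard justification the paper leaves implicit.
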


Then we specialize to the case $ G = SL_n $, and we prove the following result.
\begin{Theorem}
Let $G = SL_n$ and let $\mathcal{X}$ denote the set of all dominant coweights that can be written in the form $a \varpi_i + b \varpi_{i+1}$ for $a, b \geq 0$ and $i \in \{ 1, \ldots, {n-2} \}$. Then for all $\lambda \in \mathcal{X}$, $\overline{\Y^\lambda}$ is reduced.
\end{Theorem}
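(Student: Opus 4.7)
The plan is to apply the preceding reducedness-iff-Cohen--Macaulay theorem to reduce the problem to an explicit Cohen--Macaulayness check, and then to use the lattice model of the affine Grassmannian in type $A$ to identify $\overline{\Y^\lambda}$ with a classical Cohen--Macaulay scheme. Concretely, for $G=SL_n$ the affine Grassmannian parametrizes $\C[[t]]$-lattices $L \subset \C((t))^n$ of fixed total degree, and the Finkelberg--Mirkovi\'c conditions defining $\overline{\Y^\lambda}$ translate into $t$-adic divisibility conditions on the minors of a matrix presenting $L$ relative to the standard lattice $\cO^n$.

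The hypothesis $\lambda = a\varpi_i + b\varpi_{i+1}$ with $i \in \{1,\dots,n-2\}$ is exploited in two ways. First, the support of $\lambda$ consists of only two adjacent Dynkin nodes, so among the fundamental representations $\Lambda^k\C^n$ only $k=i$ and $k=i+1$ contribute nontrivial Finkelberg--Mirkovi\'c conditions. Second, the dominant diagonal form of $\lambda$ takes only the three values $0, b, a+b$, which keeps the elementary-divisor structure minimal. The compatibility between the $i$-th and $(i{+}1)$-th exterior power conditions is then governed by standard Pl\"ucker contractions, and the aim is to realize an affine chart of $\overline{\Y^\lambda}$ as a classical Cohen--Macaulay scheme---for instance a ladder determinantal variety in the coefficients of the lattice-defining matrix---whose reducedness is well known.

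The main obstacle will be scheme-theoretic rather than set-theoretic. The set-theoretic identification is essentially immediate from the definitions; what needs work is to show that the ideal of $\overline{\Y^\lambda}$ inside the ambient lattice moduli space is genuinely radical, not merely that it cuts out the correct support. The smoothness assertion of Theorem~\ref{th:genreducedintro} along $\Gr^\lambda$ already rules out embedded components supported on the open orbit, so the burden lies in controlling the boundary strata. The two-adjacent-weights hypothesis should enter precisely at this point, keeping the defining minor ideal simple enough to be recognized on the nose with a classical (known radical) determinantal ideal; outside this regime, the mixing of three or more fundamental representations produces Pl\"ucker relations too intricate to control by the same elementary means, which is why the argument is confined to $\mathcal{X}$.
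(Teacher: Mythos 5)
Your proposal and the paper share a single strategic touchstone: both invoke Theorem~\ref{thm:reduced-iff-cohenmacaulay} to convert reducedness into a Cohen--Macaulayness question. Beyond that the routes diverge completely, and yours has a serious gap.

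The paper does \emph{not} attempt a direct Cohen--Macaulay verification for a general $\lambda = a\varpi_i + b\varpi_{i+1}$. It carries out an explicit computation only for $\lambda = kn\varpi_1$: in that one case the chart $\overline{\Y^{kn\varpi_1}}\cap U$ sits inside the affine space of matrices $I_n + X^{(1)}t^{-1} + \cdots + X^{(k)}t^{-k}$, and the \emph{only} nontrivial Finkelberg--Mirkovi\'c conditions are the $kn$ equations $\det^{(1)} = \cdots = \det^{(kn)} = 0$ expressing $\det X = 1$ (the pole bounds from $\varpi_\ell^\vee$, $\ell \ge 2$, are automatic in that chart). So one gets a complete intersection of codimension $kn$ in $\mathbb{A}^{kn^2}$, hence Cohen--Macaulay. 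The diagram automorphism then handles $kn\varpi_{n-1}$. Everything else in $\mathcal{X}$ is obtained by \emph{bootstrapping}, using two propagation mechanisms proved earlier: (i) Proposition~\ref{prop-reduced-if}, via the Beilinson--Drinfeld one-parameter family, shows reducedness of $\overline{\Y^{\mu_1+\mu_2}}$ implies reducedness of each summand $\overline{\Y^{\mu_j}}$; and (ii) Propositions~\ref{prop:intersecting-schubert-varieties} and \ref{prop:intersecting-moduli-version-schubert-varieties}, via Frobenius splitting of Kac--Moody Schubert varieties, show $\overline{\Y^{\lambda\wedge\mu}} = \overline{\Y^\lambda}\cap\overline{\Y^\mu}$ is reduced whenever $\overline{\Y^\lambda}$ and $\overline{\Y^\mu}$ are. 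The set $\mathcal{X}$ is then identified combinatorially (via the piecewise-linear ``triangle'' argument) as the closure of $\{kn\varpi_1\}\cup\{kn\varpi_{n-1}\}$ under summands and meets.

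Your proposal instead aims to realize $\overline{\Y^{a\varpi_i+b\varpi_{i+1}}}\cap U$ directly as a classical Cohen--Macaulay scheme such as a ladder determinantal variety. This is the gap: you do not establish such an identification, and there is no reason to expect one. The complete-intersection phenomenon that powers the $kn\varpi_1$ computation is special to that case --- for general $a\varpi_i + b\varpi_{i+1}$ the pole bounds on $i\times i$ and $(i{+}1)\times(i{+}1)$ minors impose genuinely overdetermined conditions that are not cut out by the determinant alone, and the resulting ideal of $t$-adic valuation inequalities on minors of a polynomial matrix is not a determinantal ideal in the classical sense (the variables are coefficients of polynomial entries, and the conditions are degree bounds on minors, not rank conditions). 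Absent a concrete identification with a known Cohen--Macaulay family, the crucial step of your argument is missing. You also do not use, or reconstruct, either of the two propagation mechanisms that the paper relies on, which is why your sketch cannot get off the ground even in the base case of $\mathcal{X}$ beyond $kn\varpi_1$ and $kn\varpi_{n-1}$.
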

\noindent In particular, this covers all cases for $SL_2$ and $SL_3$.

\subsection{Affine Grassmannian slices}
An expanded form of the reducedness conjecture was stated by Kamnitzer-Webster-Weekes-Yacobi in \cite{KWWY}. There the authors study a particular Poisson structure on $\Gr$. This Poisson structure is related to the Lie bialgebra structure on $\mathfrak{g}[[t]]$, the quantization of which gives rise to the Hopf algebra structure on the Yangian. In particular, they study schemes $\Gr^\lambda_\mu$ which are canonically-defined (after choice of coordinate $t$) transverse slices to $\Gr^\mu$ inside of the orbit closure $\overline{\Gr^\lambda}$. The subschemes $\Gr^\lambda_\mu$ preserve the Poisson structure on $\Gr$, and the authors of \cite{KWWY} construct a quantization of a subscheme $\Y^\lambda_\mu$ that is set-theoretically supported on $\Gr^\lambda_\mu$, i.e. up to nilpotents. This scheme $\Y^\lambda_\mu$ has an explicit ring-theoretic description in terms of the representation theory of $G$ and Poisson brackets. The authors conjecture the following.

\begin{Conjecture}{\label{conj:reducedness-of-transverse-slices}}
Let $\lambda$ and $\mu$ be dominant coweights for $G$ such that $\mu \leq \lambda$. Then $\Y^\lambda_\mu = \Gr^\lambda_\mu$ as schemes. Equivalently, $\Y^\lambda_\mu$ is reduced.
\end{Conjecture}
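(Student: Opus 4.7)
The plan is to reduce Conjecture \ref{conj:reducedness-of-transverse-slices} to the earlier Conjecture \ref{conj:reducedness-of-orbit-closures} by realizing $\Y^\lambda_\mu$ as a transverse slice inside the modular Schubert scheme $\overline{\Y^\lambda}$. Since $\Gr^\lambda_\mu$ is by construction the intersection $\overline{\Gr^\lambda} \cap S_\mu$ for a smooth transverse slice $S_\mu \subset \Gr_G$ (for instance the orbit $G_1[[t^{-1}]] \cdot L_\mu$ through a chosen basepoint $L_\mu \in \Gr^\mu$), the natural expectation is that the ring-theoretic slice $\Y^\lambda_\mu$ of \cite{KWWY} coincides with the scheme-theoretic intersection $\overline{\Y^\lambda} \cap S_\mu$. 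Granted this identification, reducedness of $\overline{\Y^\lambda}$ forces reducedness of $\Y^\lambda_\mu$, because slicing by a smooth transverse subscheme preserves both reducedness and Cohen-Macaulayness.

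First I would write $S_\mu$ in explicit loop-group coordinates and impose the modular condition defining $\overline{\Y^\lambda}$ directly on pairs $(P,\phi)$ lying in $S_\mu$, producing a presentation of $\overline{\Y^\lambda} \cap S_\mu$ by generators and relations. Next I would match this presentation, term by term, with the Yangian-style generators defining $\Y^\lambda_\mu$ in \cite{KWWY}, using the fact that both arise from Taylor coefficients of the matrix entries of the trivialization $\phi$ restricted to $S_\mu$. With the identification in hand, the partial results established for $\overline{\Y^\lambda}$ transfer immediately: Theorem \ref{th:genreducedintro} yields smoothness of $\Y^\lambda_\mu$ along its open stratum; the Cohen-Macaulay/reducedness equivalence transfers verbatim; and the $SL_n$ reducedness result for $\lambda = a\varpi_i + b\varpi_{i+1}$ gives reducedness of all corresponding $\Y^\lambda_\mu$.

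The hard part is precisely the scheme-theoretic identification $\Y^\lambda_\mu = \overline{\Y^\lambda} \cap S_\mu$. The set-theoretic version is known from \cite{KWWY}, but upgrading it scheme-theoretically is subtle: the modular side may a priori carry nonreduced structure, while the Yangian-generator side is defined by Poisson relations that must be shown to generate the defining ideal of $S_\mu \cap \overline{\Y^\lambda}$ with no redundancy and no deficit. Producing one inclusion of ideals should be accessible by expanding $\phi$ in terms of the Yangian generators, but proving the reverse inclusion — that no further relations are needed — appears to be the crux. Two routes seem plausible: a Beilinson-Drinfeld degeneration that factors $S_\mu$ into simpler slices amenable to induction on the length of $\mu$; or a Poisson-geometric argument observing that any embedded component of $\Y^\lambda_\mu$ would have to be stable under its own Hamiltonian flow, which is highly restrictive and in favourable cases impossible.
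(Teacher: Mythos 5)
Your overall strategy agrees with the paper's: realize the KWWY slice as the scheme-theoretic intersection of $\overline{\Y^\lambda}$ with the slice $\Gr_\mu$, show that the Poisson ideal $J^\lambda_\mu$ cuts out exactly this intersection, and then invoke reducedness of $\overline{\Y^\lambda}$ to deduce Conjecture \ref{conj:reducedness-of-transverse-slices}. You have also correctly isolated the crux: proving that the Poisson-generated ideal coincides \emph{scheme-theoretically} with the ideal of $\overline{\Y^\lambda}\cap\Gr_\mu$, not merely set-theoretically.

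However, neither of the two routes you propose for this crux is the one the paper takes. The paper does not degenerate the slice in a Beilinson--Drinfeld family, nor does it argue via Hamiltonian flows and embedded components. Instead it proceeds by an explicit computation with the Poisson bracket of generalized minors: using the residue identity and the formula in Lemma \ref{lemma: bracket with f_i}, it shows that the pullback ideal $I^\lambda_\mu \subset \O(\Gm)$ is generated, as an \emph{ordinary} ideal, by the matrix coefficients $\Delta_{\beta,\gamma}^{(s)}$ with $s > m_i + \langle \mu^*, \varpi_i^\vee - \operatorname{wt}(\gamma)\rangle$ (the unlabeled Proposition after Corollary \ref{Poisson brackets}, together with Lemma \ref{lemma: Poisson closed} for the reverse containment). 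These explicit generators are then matched term by term against the pole conditions in the modular description of $\overline{\Y^\lambda}$, yielding Theorem \ref{th:idealofY} and hence Theorem \ref{th:reducedness}. Without some version of this identification of an ordinary generating set, your plan does not close: the ``one inclusion is easy'' assertion is not established, and the reverse inclusion is exactly what the Poisson-bracket induction in Lemma \ref{lemma: Poisson closed} is designed to prove. A second, smaller issue: the assertion that slicing a reduced scheme by a smooth transverse subscheme automatically yields a reduced intersection is false in general. The paper uses, as a separate input, that $\Gr^\lambda_\mu = \overline{\Gr^\lambda}\cap\Gr_\mu$ is reduced, citing Kumar--Schwede (Frobenius splitting of Schubert varieties); without that, reducedness of $\overline{\Y^\lambda}$ alone would not give reducedness of the slice.
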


It is easy to show that the above Conjecture \ref{conj:reducedness-of-transverse-slices} for $\mu=0$ is equivalent to the above Conjecture \ref{conj:reducedness-of-orbit-closures}. We also show a strong converse result:

\begin{Theorem}{\label{thm:reducedness-of-transverse-slices}}
Let $\lambda$ and $\mu$ be dominant coweights for $G$ such that $\mu \leq \lambda$. If $\overline{\Y^\lambda}$ is reduced, then Conjecture \ref{conj:reducedness-of-transverse-slices} is true for $\Y^\lambda_\mu$.
\end{Theorem}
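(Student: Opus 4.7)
The plan is to realize $\Y^\lambda_\mu$ as the scheme-theoretic intersection of $\overline{\Y^\lambda}$ with a transverse slice, and then to transport reducedness through this identification.

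First I would identify $\Y^\lambda_\mu$ with the scheme-theoretic intersection $\overline{\Y^\lambda} \cap S_\mu$, where $S_\mu = G_1[[t^{-1}]] \cdot t^\mu$ is the standard transverse slice to $\Gr^\mu$ inside $\Gr$, chosen so that $\Gr^\lambda_\mu = \overline{\Gr^\lambda} \cap S_\mu$ by definition. This is the technical heart of the argument: the scheme $\Y^\lambda_\mu$ is defined in \cite{KWWY} as a closed subscheme of a larger ambient slice $\Y_\mu$ whose underlying reduced space is $S_\mu$, cut out by explicit Yangian-style generators; whereas $\overline{\Y^\lambda} \cap S_\mu$ is cut out of $S_\mu$ by the pullback of the Finkelberg-Mirkovi\'c equations defining $\overline{\Y^\lambda}$ inside $\Gr$. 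I would match the two sides by observing that both sets of equations express the vanishing of the same matrix coefficients, associated to representations of $G$ and evaluated on sections of the relevant bundle near $x$. The comparison then reduces to a direct translation between the two formalisms, which the constructions in \cite{KWWY} and \cite{FM} have essentially been arranged to make compatible.

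Next I would invoke the reducedness hypothesis. Since $\overline{\Y^\lambda}$ and $\overline{\Gr^\lambda}$ are known to agree set-theoretically, and $\overline{\Y^\lambda}$ is reduced by assumption, we obtain the scheme-theoretic equality $\overline{\Y^\lambda} = \overline{\Gr^\lambda}$. Intersecting both sides scheme-theoretically with $S_\mu$ and using the identification of Step~1 yields
\[
\Y^\lambda_\mu \;=\; \overline{\Y^\lambda}\cap S_\mu \;=\; \overline{\Gr^\lambda}\cap S_\mu \;=\; \Gr^\lambda_\mu,
\]
which is precisely Conjecture \ref{conj:reducedness-of-transverse-slices} for these $\lambda, \mu$.

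The main obstacle is Step~1: reconciling the purely ring-theoretic presentation of $\Y^\lambda_\mu$ from \cite{KWWY} with the modular presentation of $\overline{\Y^\lambda}$ restricted to the slice, and verifying that the resulting defining ideals agree in $\O(S_\mu)$. Once this identification is in place, the rest of the proof is formal, since the hypothesis immediately forces $\overline{\Y^\lambda} = \overline{\Gr^\lambda}$ and the intersection with $S_\mu$ is then the reduced scheme $\Gr^\lambda_\mu$ by definition. It is worth noting that even a closed embedding $\Y^\lambda_\mu \hookrightarrow \overline{\Y^\lambda}\cap S_\mu$ would suffice for the argument: under the hypothesis the target becomes the reduced scheme $\Gr^\lambda_\mu$, and since $\Y^\lambda_\mu$ has the same underlying set, the embedding is forced to be an isomorphism.
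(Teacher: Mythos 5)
Your outline is precisely the paper's strategy: identify $\Y^\lambda_\mu$ (the scheme cut out by the KWWY Poisson ideal $J^\lambda_\mu$) with the scheme-theoretic intersection $\overline{\Y^\lambda}\cap\Gr_\mu$, and then note that reducedness of $\overline{\Y^\lambda}$ forces $\overline{\Y^\lambda}=\overline{\Gr^\lambda}$, whence
$\Y^\lambda_\mu=\overline{\Y^\lambda}\cap\Gr_\mu=\overline{\Gr^\lambda}\cap\Gr_\mu=\Gr^\lambda_\mu$,
which is reduced by \cite[Proposition 5.3]{KumSch}. However, the step you label ``the main obstacle'' and then characterize as ``a direct translation between the two formalisms'' is in fact the entire content of Sections~\ref{section: Ideal generators for Lusztig slices} and \ref{sec: Poisson brackets}, namely Theorems~\ref{th:reducedness} and \ref{th:idealofY}. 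The difficulty is that $J^\lambda_\mu$ is given only as a \emph{Poisson} ideal, so its ordinary ideal-theoretic generators are not a priori matrix coefficients at all; the nontrivial work is to prove (via the bracket identities in Lemma~\ref{lemma: bracket with f_i} and the Poisson-closedness argument of Lemma~\ref{lemma: Poisson closed}) that $I^\lambda_\mu$ is generated as an ordinary ideal by the coefficients $\Delta^{(s)}_{\beta,\gamma}$ with the correct degree bounds, after which the comparison with the modular conditions defining $\overline{\Y^\lambda}$ becomes a direct verification. Your closing observation that a one-sided closed embedding $\Y^\lambda_\mu\hookrightarrow\overline{\Y^\lambda}\cap\Gr_\mu$ would already suffice is correct (a surjective closed immersion into a reduced scheme is an isomorphism), but it does not shorten the argument: that embedding is equivalent to the containment of the ideal of $\overline{\Y^\lambda}\cap\Gr_\mu$ inside $J^\lambda_\mu$, and this is exactly the direction that requires the Poisson-bracket analysis, since the opposite containment is the easy one (the Poisson generators visibly encode pole bounds and the ideal of $\overline{\Y^\lambda}\cap\Gr_\mu$ is Poisson). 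One small notational slip: the slice is $\Gr_\mu=\Gm\, t^{w_0\mu}$, not $G_1[[t^{-1}]]\cdot t^\mu$, and there is no separate thickened ambient $\Y_\mu$ in \cite{KWWY}; the ambient slice $\Gr_\mu$ is already reduced.
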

In particular, for $SL_n$, Conjecture \ref{conj:reducedness-of-transverse-slices} is true for all $\lambda \in \mathcal{X}$.

\subsection{Overview}

The paper divides naturally into two parts. In the first part we study Conjecture \ref{conj:reducedness-of-orbit-closures}.  First, we prove Theorem \ref{th:genreducedintro} by computing the tangent space of $ \overline{\Y^\lambda} $ at the point $t^\lambda $ and showing that it has same dimension as $ \Gr^\lambda$.

 Next, we study a factorable version of $\overline{\Y^\lambda}$, i.e. a version of $\overline{\Y^\lambda}$ that naturally sits as a closed subscheme of the Beilinson-Drinfeld Grassmannian. Let $\lambda$ and $\mu$ be dominant weights, we prove that the reducedness of $\overline{\Y^{\lambda + \mu}}$ implies the reducedness of $\overline{\Y^\lambda}$ and $\overline{\Y^\mu}$. Using the fact that the scheme-theoretic
intersection of Schubert varieties is reduced, we conclude that if $\overline{\Y^\lambda}$ and $\overline{\Y^\mu}$ are both reduced, then so is $\overline{\Y^{\lambda \meet \mu}}$ (see Section \ref{sec:Some simplifying arguments} for the definition of $\lambda \meet \mu$). Thus we have two methods for proving reducedness using the reducedness in known cases.

For $SL_n$, we prove explicitly that $\overline{\Y^{nk\varpi_1}}$ is reduced by first showing that it is Cohen-Macaulay (Section \ref{sec:Type A calculations}). Then we show that it is generically reduced by exhibiting a smooth point. Using the $SL_n$ diagram automorphism, we obtain the same for $\overline{\Y^{nk\varpi_{n-1}}}$. Finally, using the two methods above, we prove that $\overline{\Y^{\lambda}}$ is reduced for all $\lambda \in \mathcal{X}$ (Section \ref{section: Generating more cases}).

In the second part of the paper (Sections \ref{section: Ideal generators for Lusztig slices} and \ref{sec: Poisson brackets}), we reduce Conjecture \ref{conj:reducedness-of-transverse-slices} to Conjecture \ref{conj:reducedness-of-orbit-closures}. This involves explicitly studying the Poisson ideal that defines $\Y^\lambda_\mu$. In particular, we give generators for the ideal as an ordinary ideal (without reference to the Poisson structure) and relate them to equations defining $\overline{\Y^\lambda}$. 
\subsection{Acknowledgements}
We thank Alexander Braverman, Shrawan Kumar, Oded Yacobi, and Xinwen Zhu for helpful conversations. We especially thank Xinwen Zhu for pointing out an error in a previous version of this paper.  J.K. was supported by NSERC and a Sloan Fellowship. D.M. was supported by a PIMS Postdoctoral Fellowship.

\section{Affine Grassmannian and $G(\cO)$-orbit closures}
\label{Affine grassmannian and G(O) orbit closures}

\subsection{Notation}
\label{Notation}
Throughout the paper, $G$ denotes a fixed semisimple group and $\g$ its Lie algebra. For simplicity we will assume $G$ is simply-connected. We also work throughout over the field $\CC$ of complex numbers. We remark that the results in Sections \ref{sec:tangent-space-computation} and \ref{sec:beilinson-drinfeld-deformation} still hold if we replace $\CC$ with any algebraically closed field of arbitrary characteristic. 

We write $ \lambda, \mu$, etc. for coweights of $ G $ and $ \nu^\vee$, etc. for weights of $ G $.  Denote their pairing by $\langle \lambda, \nu^\vee\rangle$.  We write $ \varpi_i, \varpi_i^\vee $ for the fundamental coweights and weights of $  \g$.  Similarly, we write $ \alpha_i $ for the simple coroots of $ \g $ and $ \alpha_i^{\vee} $ for the simple roots.  We write $ \Phi^\vee $ for the set of all roots.  

Let $ \lambda $ be a dominant coweight of $ G $.  Then we write $ \lambda^* = -w_0 \lambda $, where $ w_0 $ denotes the longest element of the Weyl group of $ G $.

Let $ a_{ij} $ denote the Cartan matrix of $ \g $, and let $ d_i $ be the positive integers chosen to symmetrize the Cartan matrix.  Choose Chevalley generators $e_i, h_i, f_i$ for $\g$, and extend this to a choice of root vectors $e_{\alpha^\vee}, f_{\alpha^\vee}$ for $\alpha^\vee \in \Phi_+^\vee$. We choose a $\g$-invariant symmetric bilinear form  $(\cdot,\cdot)$ on $ \g $, so that $(h_i, h_j) = d_i a_{ij}$ and $(e_i, f_j) = d_i^{-1} \delta_{ij}$, as in \cite[Chapter 2]{Kac}.  Denote $(e_{\alpha^\vee}, f_{\alpha^\vee}) = d_{\alpha^\vee}^{-1}$, and choose a dual basis $\{h^i\}$ to $\{h_i\}$.  Identifying $\h$ and  $\h^\ast$ via $(\cdot,\cdot)$ there is an induced bilinear form $(\cdot,\cdot)$ on $\h^\ast$, which for $\nu^\vee,\eta^\vee\in\h^\ast$ is given by
$$ (\nu^\vee,\eta^\vee) = \sum_i \nu^\vee(h_i)\eta^\vee(h^i)$$

\subsection{Modular description of $\Gr$}

 Recall the following modular description of the affine Grassmannian $\Gr$. Let $X$ be a a smooth curve, and let $x \in X$ be a closed point. An $S$-point of $\Gr_{X,x}$ consists of

\begin{itemize}
\item$\fF_G$ a $G$-bundle on $S \times X$.
\item A trivialization
$\phi : \fF^0_G \dashedrightarrow \fF_G$ defined on $S\times (X-x)$
\end{itemize}

By the Beauville-Laszlo theorem, this definition only depends on the adic disk centered at $x$, and we can identify $\Gr_{X,x}$ with the quotient $G(\cK _x)/G(\cO_x)$, where $\cO_x$ is the completed local ring at $x$, and $\cK_x$ is the fraction field of $\cO_x$.  If we choose a local coordinate near $ x$, then we have isomorphisms $ \cK_x = \C((t)) $ and $ \cO_x = \C[[t]] $.  This gives us an isomorphism $ \Gr_{X,x} \cong \Gr := G(\C((t))) / G(\C[[t]]) $.

\subsection{A modular description of orbit closures}

The $G(\C[[t]])$-orbits on $\Gr$ are naturally indexed by the dominant coweights.  Let $ \lambda $ be a dominant coweight and let $ t^\lambda $ be the corresponding point in $ \Gr $.  Let us write $\Gr^\lambda := G(\C[[t]]) t^\lambda $ for the orbit through $\lambda$, and let $\overline{\Gr^\lambda}$ be the closure with the \emph{reduced} scheme structure.

We define $\overline{\Y^\lambda}$ to be the following closed subfunctor of the affine Grassmannian.

An $S$-point of $\overline{\Y^\lambda}$ consists of the following
\begin{itemize}
\item$\fF_G$ a $G$-bundle on $S \times X$.
\item A trivialization $\phi : \fF^0_G \dashedrightarrow \fF_G$ defined on $S\times (X-x)$

\item For every dominant weight $\nu^\vee$, the composed map
  \begin{align}
    \label{eq:4}
\phi_{\nu^\vee} : \fF^0_G \times^G V(\nu^\vee) \dashedrightarrow \fF_G \times^G V(\nu^\vee) \rightarrow  \fF_G \times^G V(\nu^\vee) \otimes \cO\left(\langle \lambda^*, \nu^\vee \rangle \cdot x \right)
  \end{align}
is regular on on all of $S \times X$

\end{itemize}

By choosing a faithful embedding $G \hookrightarrow	GL_n$, we can see that $\overline{\Y^\lambda}$ is a finite-type Noetherian scheme. Using the Cartan Decomposition, we see that $\overline{\Y^\lambda}(F) = \overline{\Gr^\lambda}(F)$ for any field $F$. So we see that $\overline{\Y^\lambda}$ is a possibly non-reduced thickening of $\overline{\Gr^\lambda}$. Our goal is to prove that $\overline{\Y^\lambda} = \overline{\Gr^\lambda}$, or equivalently, that $\overline{\Y^\lambda}$ is reduced.

\section{Generic smoothness}{\label{sec:tangent-space-computation}}

Let $ \lambda $ be a dominant coweight.  In this section, we will prove that $ \overline{\Y^\lambda} $ is generically reduced, by computing the tangent space at the point $ t^\lambda $.

Let $X$ scheme over $\CC$, and let $x$ be a closed point of $X$. Recall that the tangent space at $x$ can be identified with the space of maps $\Spec \CC[\varepsilon]/(\varepsilon^2) \rightarrow X$ that map the closed point of $\Spec \CC[\varepsilon]/(\varepsilon^2)$ to $x$.

Now, the tangent space to $t^\lambda$ in $ \Gr $ is $\mathfrak{g}(\cK)/(t^\lambda \mathfrak{g}(\cO) t^{-\lambda})$.  Choose a basis $ \{ X_r \} $ for $ \g $ consisting of root vectors and elements of the Cartan.  For each $ r $, let $ \alpha_r^\vee $ denote the weight of $ X_r $ (so $ \alpha_r^\vee $ is either a root or 0).  Thus we see that the tangent space to $ t^\lambda $ is $ \oplus_r t^{\langle \lambda, \alpha_r^\vee \rangle -1} \C[t^{-1}] X_r $.  For any tangent vector $ t^{\langle \lambda, \alpha_r^\vee \rangle -1} a X_r $, let us write $ (1 + \varepsilon t^{\langle \lambda, \alpha_r^\vee \rangle -1} a X_r)t^\lambda $ for the corresponding $ \Spec \CC[\varepsilon]/(\varepsilon^2) $ point of $ \Gr $.

\begin{Lemma}
If $(1 + \varepsilon t^{\langle \lambda, \alpha_r^\vee \rangle -1} a X_r)t^\lambda $ is a $  \Spec \CC[\varepsilon]/(\varepsilon^2) $ point of $ \overline{\Y^\lambda} $ then $ a \in \{a_0 + \dots + a_{k-1} t^{-(k-1)} \}$, where $ k = \langle \lambda, \alpha_r^\vee \rangle $.
\end{Lemma}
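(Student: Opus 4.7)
The plan is to unpack the defining condition of $\overline{\Y^\lambda}$ representation-theoretically and evaluate it on the given deformation. A $\Spec \CC[\varepsilon]/(\varepsilon^2)$-point of $\overline{\Y^\lambda}$ corresponds to $g \in G(\CC[\varepsilon]/(\varepsilon^2)((t)))$ such that, for every dominant weight $\nu^\vee$ and every $v \in V(\nu^\vee)$, the image $g \cdot v \in V(\nu^\vee) \otimes \CC((t))[\varepsilon]/(\varepsilon^2)$ has $t$-adic valuation at least $-\langle \lambda^*, \nu^\vee \rangle$. For $g = (1 + \varepsilon t^{k-1} a X_r) t^\lambda$ and $v$ a weight vector of weight $\mu^\vee$, I would first compute
\[
g \cdot v = t^{\langle \lambda, \mu^\vee \rangle} v + \varepsilon \, t^{k-1+\langle \lambda, \mu^\vee \rangle} a \cdot X_r v.
\]
The non-$\varepsilon$ summand automatically satisfies the bound (since $\langle \lambda, \mu^\vee \rangle \geq \langle \lambda, w_0\nu^\vee \rangle = -\langle \lambda^*, \nu^\vee\rangle$), so the constraint is nontrivial only when $X_r v \neq 0$.

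Writing $a = \sum_{j\geq 0} a_j t^{-j}$, regularity of the $\varepsilon$-coefficient translates into the inequality
\[
j \leq k - 1 + \langle \lambda, \mu^\vee \rangle + \langle \lambda^*, \nu^\vee \rangle
\]
for every $j$ with $a_j \neq 0$, and this must hold for every $(\nu^\vee, \mu^\vee)$ with $X_r$ acting nontrivially on $V(\nu^\vee)_{\mu^\vee}$. To obtain the lemma's bound $j \leq k - 1$, I would exhibit a pair $(\nu^\vee, \mu^\vee)$ with $\langle \lambda, \mu^\vee \rangle + \langle \lambda^*, \nu^\vee\rangle = 0$ and $X_r v_{\mu^\vee} \neq 0$. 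The minimum of $\langle \lambda, \mu^\vee \rangle + \langle \lambda^*, \nu^\vee \rangle$ over weights $\mu^\vee$ of $V(\nu^\vee)$ is $0$, attained precisely at $\mu^\vee = w_0 \nu^\vee$.

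The bulk of the work is then a case analysis on the weight $\alpha_r^\vee$ of $X_r$. If $\alpha_r^\vee$ is a positive root (or zero), I take $v = v_-$, the lowest weight vector of $V(\nu^\vee)$; then $X_r v_-$ lies in the $(w_0 \nu^\vee + \alpha_r^\vee)$-weight space, and by $\mathfrak{sl}_2$-theory applied to the triple containing $X_r$, this is nonzero as long as $\langle w_0 \nu^\vee, \alpha_r^\vee\rangle \neq 0$, which can be arranged by taking $\nu^\vee$ sufficiently dominant. This yields the desired $j \leq k - 1$. If $\alpha_r^\vee$ is a negative root, then $X_r$ annihilates $v_-$, so I would instead pick $v \in V(\nu^\vee)_{w_0 \nu^\vee - \alpha_r^\vee}$ with $X_r v \in V(\nu^\vee)_{w_0 \nu^\vee}$ nonzero; here $\mu^\vee = w_0\nu^\vee - \alpha_r^\vee$ gives $\langle \lambda, \mu^\vee\rangle + \langle \lambda^*, \nu^\vee\rangle = -k$, hence the constraint becomes $j \leq -1$, forcing $a = 0$, which is exactly what the lemma asserts in the $k \leq 0$ case (the summation $\{a_0 + \cdots + a_{k-1} t^{-(k-1)}\}$ being empty).

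The main obstacle is verifying the existence of these nonzero matrix coefficients for some dominant $\nu^\vee$; this reduces to a short $\mathfrak{sl}_2$-theoretic argument together with a choice of $\nu^\vee$ large enough (e.g., a suitable multiple of $\rho^\vee$) to ensure all the relevant weight spaces are nontrivial and all the relevant raising/lowering operators are nonzero. The computation of the regularity condition itself is routine once the setup is in place.
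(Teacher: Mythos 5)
Your proof is correct and follows essentially the same route as the paper: both apply the regularity condition in the definition of $\overline{\Y^\lambda}$ to a lowest-weight vector of a sufficiently regular representation when $\alpha_r^\vee \geq 0$, and to a raising operator applied to the lowest-weight vector when $\alpha_r^\vee < 0$, then read off the valuation bound on $a$. The only cosmetic difference is that the paper phrases things using an anti-dominant $\nu^\vee$ and $V(w_0\nu^\vee)$, while you use a dominant $\nu^\vee$ and $V(\nu^\vee)$, and you frame the choice of weight $\mu^\vee$ as optimizing the bound $j \leq k-1 + \langle\lambda,\mu^\vee\rangle + \langle\lambda^*,\nu^\vee\rangle$ — the underlying computation is identical.
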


\begin{proof}
For any element $ a \in \C((t)) $, let us write $ \val(a) $ for the valuation of $ a $ (the most negative exponent occurring in $ a $).

Let $ \nu^\vee $ be a regular anti-dominant weight of $ G $ and consider a vector $ v_{\nu^\vee} $ of weight $ \nu^\vee $ lying in the representation $V(w_0 \nu^\vee) $ of highest weight $ w_0 \nu^\vee $.  

\noindent{\bf Case $\alpha_r^\vee \geq  0$:} 
We may assume that $ \langle X_r, \nu^\vee \rangle \neq 0 $ for all basis vectors $ X_r $ which lie in the Cartan. Fix a basis vector $ X_r $ such that $ \alpha_r^\vee $ is a positive root or 0.  Thus, since $ \nu^\vee$ is regular, $X_r v_{\nu^\vee} \ne 0 $.

Then for any $ a \in \C[t^{-1}] $ consider
\begin{align}
  \label{eq:3}
  (1 + \varepsilon t^{\langle \lambda, \alpha_r^\vee \rangle -1} a X_r)  t^{\lambda} v_{\nu^\vee} = t^{\langle \lambda, \nu^\vee \rangle}v_{\nu^\vee} + \varepsilon a t^{\langle  \lambda, \alpha_r^\vee + \nu^\vee \rangle -1} X_r v_{\nu^\vee} \in V(w_0\nu^\vee)((t)) \otimes \CC[\varepsilon]/(\varepsilon^2)
\end{align}
By the definition of $ \overline{\Y^\lambda}$, if $ (1 + \varepsilon t^{\langle \lambda, \alpha_r^\vee \rangle -1} a X_r)t^\lambda $ is a $  \Spec \CC[\varepsilon]/(\varepsilon^2) $ point of $ \overline{\Y^\lambda} $, then we know that the worst pole allowed in \eqref{eq:3} is $ - \langle \lambda, \nu^\vee \rangle $. The worst pole that occurs in \eqref{eq:3} is equal to $ -\val( a t^{\langle  \lambda, \alpha_r^\vee + \nu^\vee \rangle -1})$.  Thus we conclude that $ \val(a) > - \langle \lambda, \alpha_r^\vee \rangle$, and the result follows for this case.

\noindent{\bf Case $\alpha_r^\vee < 0$:}
Now we consider a basis vector $ X_r $ for which $ \alpha_r^\vee $ is a negative root.  Let $ E $ be a root vector corresponding the positive root $ -\alpha_r^\vee $.  Let $ w = E v_{\nu^\vee} $.  This is a non-zero vector of weight $ \nu^\vee -\alpha_r^\vee $ and $ X_r w \ne 0 $.  For any $ a \in \C[t^{-1}] $ consider
\begin{align}
  \label{eq:5}
(1 + \varepsilon t^{\langle \lambda, \alpha_r^\vee \rangle -1} a X_r)  t^{\lambda} w = t^{\langle \lambda, \nu^\vee - \alpha_r^\vee \rangle} w + \varepsilon a t^{\langle  \lambda, \alpha_r^\vee + \nu^\vee - \alpha_r^\vee \rangle -1} X_r w \in V(w_0\nu^\vee)((t)) \otimes \CC[\varepsilon]/(\varepsilon^2)
\end{align}
As before the worst pole allowed in \eqref{eq:5} is $ -\langle \lambda, \nu^\vee \rangle  $ and thus we conclude that $ a = 0 $. The result follows in this case too.
\end{proof}

\begin{Corollary} \label{cor:genreduced}
$\overline{\Y^\lambda} $ is smooth (and therefore reduced) along the open subset $ \Gr^\lambda \subset \overline{\Gr^\lambda}$. 
\end{Corollary}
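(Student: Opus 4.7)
The plan is to combine the tangent-space estimate from the preceding Lemma with the known dimension of $\Gr^\lambda$ to conclude smoothness at $t^\lambda$, and then to propagate smoothness along the whole orbit using $G(\cO)$-equivariance.

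First, I would tally the constraints imposed by the Lemma. The tangent space to $t^\lambda$ in the ambient affine Grassmannian is
\[
T_{t^\lambda}\Gr = \bigoplus_r t^{\langle \lambda, \alpha_r^\vee\rangle -1}\,\CC[t^{-1}]\,X_r.
\]
The Lemma restricts the component along each basis vector $X_r$ to a space of dimension $\max(\langle \lambda, \alpha_r^\vee\rangle, 0)$: for $X_r$ lying in the Cartan we have $\alpha_r^\vee = 0$ and the only allowed $a$ is $0$; for $\alpha_r^\vee$ a negative root we again get $a = 0$; and for $\alpha_r^\vee$ a positive root $\alpha^\vee$ we get a space of dimension $\langle \lambda, \alpha^\vee\rangle$ (non-negative since $\lambda$ is dominant). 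Summing over basis vectors yields the upper bound
\[
\dim T_{t^\lambda}\overline{\Y^\lambda} \;\leq\; \sum_{\alpha^\vee \in \Phi_+^\vee} \langle \lambda, \alpha^\vee\rangle \;=\; \langle \lambda, 2\rho^\vee\rangle.
\]

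Next, I would match this against the known dimension of the orbit. Since $\overline{\Y^\lambda}$ is a thickening of $\overline{\Gr^\lambda}$ (they agree on field points by the discussion at the end of Section 2), we have a closed embedding $\overline{\Gr^\lambda}\hookrightarrow\overline{\Y^\lambda}$, so $T_{t^\lambda}\overline{\Y^\lambda}$ contains $T_{t^\lambda}\overline{\Gr^\lambda}$. But $t^\lambda$ is a smooth point of $\overline{\Gr^\lambda}$ (it lies in the open orbit $\Gr^\lambda$), so $\dim T_{t^\lambda}\overline{\Gr^\lambda} = \dim\Gr^\lambda = \langle \lambda, 2\rho^\vee\rangle$. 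Combined with the upper bound, equality holds throughout, which forces $\overline{\Y^\lambda}$ to be smooth at $t^\lambda$ of dimension $\langle \lambda, 2\rho^\vee\rangle$.

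Finally, the moduli description makes $\overline{\Y^\lambda}$ manifestly $G(\cO)$-invariant, since the action of $G(\cO)$ changes the trivialization $\phi$ only on the formal disk at $x$ and hence preserves the pole-control condition defining $\overline{\Y^\lambda}$. Since $\Gr^\lambda = G(\cO)\cdot t^\lambda$, smoothness at $t^\lambda$ translates to smoothness along all of $\Gr^\lambda$, and smoothness implies reducedness. The substantive input is already contained in the Lemma; the corollary is then just a dimension count plus equivariance, so no real obstacle remains.
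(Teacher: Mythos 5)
Your proposal matches the paper's own proof essentially verbatim: bound $\dim T_{t^\lambda}\overline{\Y^\lambda}$ above by $\langle\lambda,2\rho^\vee\rangle$ using the preceding Lemma, observe this equals $\dim\Gr^\lambda$, conclude smoothness at $t^\lambda$, propagate along $\Gr^\lambda$ by $G(\cO)$-equivariance, and finish with regular $\Rightarrow$ reduced. Your additional remark that the closed embedding $\overline{\Gr^\lambda}\hookrightarrow\overline{\Y^\lambda}$ supplies the matching lower bound just makes explicit a step the paper leaves implicit.
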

\begin{proof}
From the above computation, we see that the dimension of the tangent space to $ \overline{\Y^\lambda} $ at $ t^\lambda $ is at most
$$
\sum_{\alpha^\vee}  \langle \lambda, \alpha^\vee \rangle
$$
where the sum ranges over all positive roots.  This number equals $ \langle \lambda, 2\rho^\vee \rangle $ which is the dimension of the smooth variety $ \Gr^\lambda $.  Thus we conclude that $ \overline{\Y^\lambda} $ is smooth at $ t^\lambda $.  By the $ G(\C[[t]]) $ action, we conclude that $ \overline{\Y^\lambda} $ is smooth along $ \Gr^\lambda $.

Finally, this implies that $\overline{\Y^\lambda}$ is reduced along $\Gr^\lambda$ via the general fact that a regular local ring is an integral domain (for example, \cite[Lemma 11.23]{AM}).
\end{proof}

\subsection{Reducedness and the Cohen-Macaulay property}

\begin{Theorem}{\label{thm:reduced-iff-cohenmacaulay}}
  The scheme $\overline{\Y^\lambda}$ is reduced if and only if it is Cohen-Macaulay. 
\end{Theorem}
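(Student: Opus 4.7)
The plan is to invoke Serre's criterion for reducedness: a Noetherian scheme is reduced if and only if it satisfies both $(R_0)$ (regularity at every codimension-zero point) and $(S_1)$ (absence of embedded associated primes). Combined with Corollary~\ref{cor:genreduced} and the known Cohen-Macaulayness of the affine Schubert variety $\overline{\Gr^\lambda}$ cited in the introduction, this criterion handles both implications with essentially no extra work.

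For the forward direction, I would argue that if $\overline{\Y^\lambda}$ is reduced then it must agree scheme-theoretically with $\overline{\Gr^\lambda}$. Indeed, both schemes have the same underlying topological space, and $\overline{\Gr^\lambda}$ is defined to be this space with the reduced induced structure; once $\overline{\Y^\lambda}$ is known to be reduced, the two schemes coincide. Since $\overline{\Gr^\lambda}$ is Cohen-Macaulay, so is $\overline{\Y^\lambda}$.

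For the backward direction, assume that $\overline{\Y^\lambda}$ is Cohen-Macaulay. Then $\overline{\Y^\lambda}$ automatically satisfies $(S_k)$ for every $k$, and in particular $(S_1)$. To verify $(R_0)$, I would use that $\overline{\Gr^\lambda}$ is irreducible, so $\overline{\Y^\lambda}$ has a unique generic point, lying inside the open dense orbit $\Gr^\lambda$. By Corollary~\ref{cor:genreduced}, $\overline{\Y^\lambda}$ is smooth, and hence regular, along $\Gr^\lambda$, so regularity holds at this generic point. Serre's criterion then forces $\overline{\Y^\lambda}$ to be reduced.

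There is no substantive obstacle here: the theorem is essentially a repackaging of the generic smoothness established in Section~\ref{sec:tangent-space-computation} with the classical Cohen-Macaulayness of $\overline{\Gr^\lambda}$ and Serre's criterion. The only observation worth highlighting is that once one has the global Cohen-Macaulay hypothesis and generic reducedness at the single generic point of the irreducible scheme $\overline{\Y^\lambda}$, there is no room left for nilpotents anywhere.
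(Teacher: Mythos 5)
Your proposal is correct and follows essentially the same approach as the paper: the forward direction uses that a reduced $\overline{\Y^\lambda}$ must coincide with the Cohen-Macaulay scheme $\overline{\Gr^\lambda}$, and the backward direction combines the generic reducedness from Corollary~\ref{cor:genreduced} with the fact that for Cohen-Macaulay schemes generic reducedness implies reducedness (the paper states this directly, while you unpack it via Serre's $(R_0)+(S_1)$ criterion, which is the same underlying argument).
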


\begin{proof}
It is known that the spherical Schubert varieties $\overline{\Gr^\lambda}$ are Cohen-Macaulay (for example, \cite[Theorem 8]{F}). Therefore if $\overline{\Y^\lambda}$ is reduced, then it is Cohen-Macaulay. 

We have shown in Corollary \ref{cor:genreduced}, that the schemes $\overline{\Y^\lambda}$ are generically reduced. For Cohen-Macaulay schemes, generic reduceness implies reducedness. Therefore, if $\overline{\Y^\lambda}$ is Cohen-Macaulay, then it is reduced.
\end{proof}

\section{The Beilinson-Drinfeld deformation of $ \overline{\Y^\lambda} $}
\label{sec:beilinson-drinfeld-deformation}
\subsection{A one-parameter family}

Let $X$ be a smooth curve, and let $p \in X$ be a  closed point. Let $\Gr_{G,X\times p}$ be the closed sub ind-scheme of the Beilinson-Drinfeld Grassmannian for two points on a curve where the second point is fixed at $p$. Explicitly, an $S$-point of $\Gr_{G,X\times p}$ consists of the following data:
\begin{itemize}

\item $x: S \rightarrow X$. Let $\Gamma_x$ denote the graph of $x$. Let $\Gamma_p = S \times \{p\} \subset S \times X $ be the graph of the constant map taking value $p$.

\item$\fF_G$ a $G$-bundle on $S \times X$.

\item A trivialization $\phi : \fF^0_G \dashedrightarrow \fF_G$ defined on $S\times X - (\Gamma_x \cup \Gamma_p)$

\end{itemize}

\noindent Let $\pi_x : \Gr_{G,X\times p} \rightarrow X$ be the map that remembers the point $x$. Then if $y \in X$ and $y \neq p$, then we can canonically identify $\pi_x^{-1}(y)$ with $\Gr_{X,y} \times \Gr_{X,p}$. When $y = p$, we have $\pi_x^{-1}(p) = \Gr_{X,p}$.

Let $\lambda, \mu$ be two  dominant coweights. Now let us consider the following closed sub-ind-scheme $\overline{\Y^{\lambda,\mu}_{X\times p}}$ of $\Gr_{G,X\times p}$. An $S$-point of $\overline{\Y^{\lambda,\mu}_{X\times p}}$ consists of an $S$-point of $\Gr_{G,X\times p}$ subject to the following condition:

\begin{itemize}

\item For every dominant weight $\nu^\vee$, the composed map
  \begin{align*}
\phi_{\nu^\vee} : \fF^0_G \times^G V(\nu^\vee) \dashedrightarrow \fF_G \times^G V(\nu^\vee) \rightarrow  \fF_G \times^G V(\nu^\vee) \otimes \cO\left(\langle \lambda^*, \nu^\vee \rangle \cdot \Gamma_x + \langle \mu^*, \nu^\vee\rangle \cdot \Gamma_p \right)
  \end{align*}
is regular on on all of $S \times X$
\end{itemize}

By embedding $G$ into some $GL_n$, we can see that  $\overline{\Y^{\lambda,\mu}_{X\times p}}$ is a finite-type scheme.

\begin{Proposition}{\label{prop-reduced-if}}
Suppose $\lambda$ and $\mu$ are dominant coweights.
If $\overline{\Y^{\lambda + \mu}}$ is reduced, then both $\overline{\Y^{\lambda}}$ and $\overline{\Y^{\mu}}$ are reduced.
\end{Proposition}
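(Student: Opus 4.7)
My plan is to exploit the one-parameter Beilinson-Drinfeld family $Z := \overline{\Y^{\lambda, \mu}_{X \times p}} \to X$ as a degeneration between $\overline{\Y^\lambda} \times \overline{\Y^\mu}$ (generic fiber over $y \neq p$) and $\overline{\Y^{\lambda + \mu}}$ (special fiber over $y = p$). Once flatness of $Z \to X$ is established, the openness of geometric reducedness of fibers in flat families of finite type propagates reducedness of the special fiber to a Zariski neighborhood of $p$ in $X$, and any nearby non-special fiber then gives reducedness of $\overline{\Y^\lambda} \times \overline{\Y^\mu}$. Since a product of finite-type $\CC$-schemes is reduced if and only if both factors are, this will suffice.

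First I would identify the fibers of the family. For $y = p$, the graphs $\Gamma_x$ and $\Gamma_p$ inside $S \times X$ coincide, so the divisor $\langle \lambda^*, \nu^\vee\rangle \Gamma_x + \langle \mu^*, \nu^\vee\rangle \Gamma_p$ collapses to $\langle (\lambda + \mu)^*, \nu^\vee\rangle \Gamma_p$, yielding $Z_p = \overline{\Y^{\lambda + \mu}}$. For $y \neq p$, the two graphs are disjoint and the factorization property of the Beilinson-Drinfeld Grassmannian identifies the fiber with $\overline{\Y^\lambda} \times \overline{\Y^\mu}$. Next I would establish flatness of $Z \to X$. Since $X$ is a smooth curve, this amounts to showing no associated point of $Z$ lies over a closed point. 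All fibers have equal dimension $\langle \lambda + \mu, 2\rho^\vee\rangle$, which rules out any irreducible component of $Z$ being contained in $Z_p$. To rule out embedded primes supported in $Z_p$, I would compare $Z$ with the reduced Beilinson-Drinfeld Schubert variety $Y := \overline{\Gr^{\lambda, \mu}_{X \times p}} \hookrightarrow Z$: this inclusion is a set-theoretic equality, $Y$ is known to be flat and Cohen-Macaulay over $X$, and under the hypothesis $\overline{\Y^{\lambda + \mu}}$ is reduced, so the two schemes agree on the special fiber. A Nakayama-type argument using flatness of $Y$ then forces $Y = Z$ in a Zariski neighborhood of $Z_p$, giving the required flatness of $Z$ there.

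The hardest step will be the flatness argument, specifically ruling out embedded primes of $Z$ concentrated at the special fiber. The reducedness hypothesis enters here precisely to match the special fibers of $Z$ and of the flat reduced Beilinson-Drinfeld Schubert variety $Y$; without this matching we could not transfer flatness from $Y$ to $Z$. Once flatness is in hand, openness of geometric reducedness produces an open $U \ni p$ in $X$ over which all fibers are reduced, and choosing any $y \in U \setminus \{p\}$ completes the argument.
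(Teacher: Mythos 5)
Your overall strategy coincides with the paper's: pass to the one-parameter Beilinson--Drinfeld family $\pi_x\colon \overline{\Y^{\lambda,\mu}_{X\times p}} \to X$, establish that $\pi_x$ is flat, and then invoke the openness of the locus of geometrically reduced fibers for a proper flat morphism ([EGAIV3, Prop.\ 12.2.4(v)]), using the identification of the general fiber with $\overline{\Y^\lambda}\times\overline{\Y^\mu}$ and the special fiber with $\overline{\Y^{\lambda+\mu}}$. Where you genuinely diverge is in the flatness step. The paper invokes a general criterion directly ([GW, Prop.\ 14.16]: a locally Noetherian scheme over a DVR with reduced special fiber which is the set-theoretic closure of its generic fiber is automatically flat). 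The set-theoretic closure condition is clear, and the reducedness of the special fiber is exactly the hypothesis, so flatness drops out with no further input. You instead import the auxiliary scheme $Y = \overline{\Gr^{\lambda,\mu}_{X\times p}}$, claim its flatness and Cohen--Macaulayness over $X$ as known, match $Y_p = Z_p$ using the reducedness hypothesis, and run a $\mathrm{Tor}_1$/Nakayama argument to force the closed immersion $Y \hookrightarrow Z$ to be an equality near the special fiber. This Nakayama step is correct: flatness of $Y$ kills $\mathrm{Tor}_1^{\cO_X}(\cO_Y,\kappa(p))$, so $\ker(\cO_Z \to \cO_Y) \otimes \kappa(p) = 0$, and Nakayama plus coherence gives vanishing of the kernel on a neighborhood of $Z_p$. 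What you pay for is the extra input of flatness (and Cohen--Macaulayness, which you actually do not end up needing) of the reduced Beilinson--Drinfeld Schubert variety; that is a nontrivial fact in its own right and would require a citation. The paper's argument is shorter and avoids this dependence, which is why the authors phrase the flatness claim as they do.

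One small remark: once your Nakayama argument gives $Y = Z$ near $Z_p$, you could bypass the openness-of-reducedness step entirely by using that the fibers of $Y$ over nearby $y\neq p$ are already known (again from Zhu's work) to be $\overline{\Gr^\lambda}\times\overline{\Gr^\mu}$; but as written, your use of openness of geometric reducedness is also fine and mirrors the paper's.
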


\begin{proof}
First we will prove that the map $\pi_x$ is in fact flat. Over $X-\{p\}$, the map $\pi_x$ is a fiber bundle and is therefore flat. Let us consider the base change of the family $\pi_x$ over $\Spec \cO_{X,x}$ where $\cO_{X,x}$ is the local ring at $x$. By \cite[Proposition 14.16]{GW} a locally Noetherian family over a DVR with reduced special fiber such that the entire family is the \emph{set-theoretic} closure of the generic fiber is automatically flat. The \emph{set-theoretic} closure condition is clear (see also \cite[Proposition 2.1.4]{Zhu}).

Then because $\pi_x$ is proper and flat, by \cite[Proposition 12.2.4(v)]{EGAIV3}, we know that the locus of $y \in X$, such that the scheme-theoretic fiber $\pi_{x}^{-1}(y)$ is geometrically reduced is open.
\end{proof}

\subsection{Some remarks on the family $\pi_x$.}

The proof of Proposition \ref{prop-reduced-if} tells us that the reducedness of the special fiber of $\pi_x$ implies that $\pi_x$ is flat and that the generic fiber is reduced. Here we will show that the converse is true. This fact will not be used in the sequel.

\begin{Proposition}{\label{prop-reduced-onlyif}}
 Suppose that $\pi_x : \overline{\Y^{\lambda,\mu}_{X\times p}} \rightarrow X $ is flat and that
 both $\overline{\Y^{\lambda}}$ and $\overline{\Y^{\mu}}$ are reduced. Then $\overline{\Y^{\lambda + \mu}}$ is reduced.
\end{Proposition}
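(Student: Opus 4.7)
The plan is to compare Hilbert polynomials in two flat families degenerating respectively to $\overline{\Y^{\lambda+\mu}}$ and to $\overline{\Gr^{\lambda+\mu}}$, and to deduce from the comparison that the natural closed immersion $\overline{\Gr^{\lambda+\mu}} \hookrightarrow \overline{\Y^{\lambda+\mu}}$ is an isomorphism.

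First I would fix an ample line bundle $\mathcal{L}$ on the projective scheme $\overline{\Gr^{\lambda+\mu}}$ (for instance the restriction of the determinant line bundle on $\Gr$), and extend it to an ample line bundle on the ambient Beilinson-Drinfeld Grassmannian whose restriction to each fiber recovers the natural ample line bundles on $\overline{\Gr^\lambda}\times\overline{\Gr^\mu}$ and on $\overline{\Gr^{\lambda+\mu}}$. Since $\pi_x$ is flat (by hypothesis) and projective, the Hilbert polynomial of its fibers with respect to $\mathcal{L}$ is constant on $X$. The fiber at a point $y\ne p$ is $\overline{\Y^\lambda}_{X,y}\times\overline{\Y^\mu}_{X,p}$, which by the reducedness hypothesis on each factor coincides with $\overline{\Gr^\lambda}\times\overline{\Gr^\mu}$. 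The fiber at $p$ is $\overline{\Y^{\lambda+\mu}}$. Therefore
\[
P\bigl(\overline{\Y^{\lambda+\mu}}\bigr) \;=\; P\bigl(\overline{\Gr^\lambda}\times\overline{\Gr^\mu}\bigr).
\]

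Next I would invoke the classical flatness of the reduced Beilinson-Drinfeld family of spherical Schubert varieties, whose generic fiber is $\overline{\Gr^\lambda}\times\overline{\Gr^\mu}$ and whose special fiber at the diagonal is $\overline{\Gr^{\lambda+\mu}}$ (see \cite{Zhu}). By constancy of Hilbert polynomial in this family, $P(\overline{\Gr^\lambda}\times\overline{\Gr^\mu}) = P(\overline{\Gr^{\lambda+\mu}})$, and combining with the previous step yields $P(\overline{\Y^{\lambda+\mu}}) = P(\overline{\Gr^{\lambda+\mu}})$. Since $\overline{\Gr^{\lambda+\mu}}$ is the underlying reduced closed subscheme of $\overline{\Y^{\lambda+\mu}}$, we have a short exact sequence
\[
0 \;\to\; \mathcal{I} \;\to\; \mathcal{O}_{\overline{\Y^{\lambda+\mu}}} \;\to\; \mathcal{O}_{\overline{\Gr^{\lambda+\mu}}} \;\to\; 0
\]
of coherent sheaves on the projective scheme $\overline{\Gr^{\lambda+\mu}}$, with $\mathcal{I}$ the nilradical ideal sheaf. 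Additivity of Hilbert polynomials and the equality just obtained force $P(\mathcal{I})\equiv 0$, whence $\mathcal{I}=0$. Therefore $\overline{\Y^{\lambda+\mu}} = \overline{\Gr^{\lambda+\mu}}$ as schemes and is in particular reduced.

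The main obstacle I anticipate is bookkeeping: setting up both flat families inside a common projective ambient scheme with a single polarization so that the Hilbert polynomials are computed against the same ample line bundle. Once this compatibility is arranged the argument is essentially immediate, and the nontrivial external input --- the classical flatness of the Beilinson-Drinfeld family for Schubert varieties --- is standard.
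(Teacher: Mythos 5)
Your proposal is correct and is essentially the paper's own argument, just phrased in terms of Hilbert polynomials rather than in terms of dimensions of $H^0(\cdot,\cL^{\otimes k})$ for $k\gg 0$. The two inputs you invoke — flatness of $\pi_x$ (hypothesis) and flatness of the reduced Beilinson--Drinfeld family (Zhu) — are exactly what the paper uses, the latter in the equivalent guise of the tensor product property of affine Demazure modules, and the final ``Hilbert polynomial of the nilradical sheaf vanishes, hence the sheaf is zero'' step matches the paper's ``$H^0(\mathcal{J}\otimes\cL^{\otimes k})=0$ for $k\gg 0$ and $\cL$ ample, hence $\mathcal{J}=0$.''
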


This argument is very similar to the argument used to prove \cite[Proposition 2.1.4]{Zhu}.

\begin{proof}
Consider the determinant line bundle $\cL$, and let $\mathcal{J}$ be the nilradical sheaf on $\overline{\Y^{\lambda + \mu}}$. We get a short exact sequence
\begin{align}
0 \rightarrow \cJ \rightarrow \cO_{\overline{\Y^{\lambda + \mu}}} \rightarrow i_* \cO_{\overline{\Gr^{\lambda + \mu}}} \rightarrow 0
\end{align}
where $i : \overline{\Gr^{\lambda + \mu}} \rightarrow \overline{\Y^{\lambda + \mu}}$ is the natural map.

For $k >> 0$, we can tensor with $\cL^{\otimes k}$ and take global sections to obtain the following short exact sequence
\begin{align}
0 \rightarrow H^0(\overline{\Y^{\lambda + \mu}}, \cJ \otimes \cL^{\otimes k} ) \rightarrow H^0(\overline{\Y^{\lambda + \mu}}, \cL^{\otimes k}) \rightarrow H^0(\overline{\Gr^{\lambda + \mu}}, \cL^{\otimes k}) \rightarrow 0
\end{align}

By flatness of $\pi_x$, for $k >> 0$, we have an isomorphism $H^0(\overline{\Y^{\lambda + \mu}}, \cL^{\otimes k}) \cong H^0(\overline{\Y^{\lambda}} \times \overline{\Y^{\mu}}, \cL^{\otimes k})= H^0(\overline{\Gr^{\lambda}} \times \overline{\Gr^{\mu}}, \cL^{\otimes k}) \cong H^0(\overline{\Gr^{\lambda + \mu}}, \cL^{\otimes k})$, where the last isomorphism is given by the tensor product property of affine Demazure modules appearing in integrable vacuum representations \cite[Lemma 2.1.4]{Zhu}.

 Therefore, we have $H^0(\overline{\Y^{\lambda + \mu}}, \cJ \otimes \cL^{\otimes k} ) = 0$ for $k$ sufficiently large. As $\cL$ is ample, we conclude that $\cJ =0$.
\end{proof}

\begin{Remark}
If we knew that the map $\pi_x : \overline{\Y^{\lambda,\mu}_{X\times p}} \rightarrow X $ was flat for all $\lambda$ and $\mu$, then the problem of proving the reducedness of $\overline{\Y^{\nu}}$ would greatly simplify. For example, by taking $G$ to be of adjoint type it would suffice to check the reducedness for all fundamental coweights. When $G=PGL_n$, all fundamental coweights $\varpi$ are miniscule, and $\overline{\Y^\varpi}$ is in fact smooth (and therefore reduced) as we prove in section \ref{sec:tangent-space-computation}.
Alternatively, using Proposition \ref{prop-reduced-if} and \ref{prop-reduced-onlyif} it would also suffice to prove that $\overline{\Y^{\nu}}$ is reduced for a single regular dominant coweight $\nu$.

Therefore, we see that the question of reducedness of $\overline{\Y^{\nu}}$ is very closely related to the question of the flatness of $\pi_x : \overline{\Y^{\lambda,\mu}_{X\times p}} \rightarrow X $ for all $\lambda$ and $\mu$.  Unfortunately, it does not seem easy to directly prove the flatness of $ \pi_x $.
\end{Remark}

\section{Some simplifying arguments}
\label{sec:Some simplifying arguments}

\subsection{Reduction to big cell}

The big cell in the affine Grassmannian is defined as $ U = G(\C[t^{-1}]) G(\C[[t]]) / G(\C[[t]]) \subset \Gr $.  Define $ G_1[t^{-1}] $ be the kernel of the evaluation morphism $ G\big(\C[t^{-1}]\big) \rightarrow G $.  Recall that the natural map $ G_1[t^{-1}]\rightarrow U $ is an isomorphism.

\begin{Proposition} \label{prop:bigcellreduced}
Let $U \subset \Gr$ be the big cell in the affine Grassmannian. Then $\overline{\Y^\lambda}$ is reduced if and only if $U \cap \overline{\Y^\lambda}$ is reduced.
\end{Proposition}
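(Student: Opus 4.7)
The forward direction is immediate: any open subscheme of a reduced scheme is reduced, and $U \cap \overline{\Y^\lambda}$ is open in $\overline{\Y^\lambda}$.

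For the converse, my strategy is to exploit the $G(\C[[t]])$-action on $\overline{\Y^\lambda}$ together with the fact that $1 \in U$. Let $Z$ denote the non-reduced locus of $\overline{\Y^\lambda}$, i.e., the support of the nilradical sheaf $\mathcal{J}$. Since $\overline{\Y^\lambda}$ is Noetherian, $\mathcal{J}$ is coherent and $Z$ is closed. Moreover, $G(\C[[t]])$ acts on $\overline{\Y^\lambda}$ by scheme automorphisms (the defining condition on the maps $\phi_{\nu^\vee}$ is manifestly $G(\C[[t]])$-stable), so $\mathcal{J}$ is preserved and $Z$ is $G(\C[[t]])$-invariant.

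The key step will be to show that if $Z$ is non-empty, then necessarily $1 \in Z$. Any point $z \in Z$ lies in some orbit $\Gr^\mu$ with $\mu \leq \lambda$ dominant, and the $G(\C[[t]])$-invariance together with closedness of $Z$ forces $\overline{\Gr^\mu} \subset Z$. Here the simply-connected hypothesis on $G$ enters crucially: the cocharacter lattice then coincides with the coroot lattice, and because the inverse Cartan matrix has all positive entries, the dominant cone is contained in the $\mathbb{R}_{\geq 0}$-span of the simple coroots. Consequently every dominant coweight $\mu$ is a non-negative integer combination of simple coroots, so $0 \leq \mu$ in the dominance order and $\Gr^0 = \{1\} \subset \overline{\Gr^\mu} \subset Z$.

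Finally, since $1 \in U$ and $U$ is open in $\Gr$, the local ring $\mathcal{O}_{\overline{\Y^\lambda}, 1}$ agrees with $\mathcal{O}_{U \cap \overline{\Y^\lambda}, 1}$. By hypothesis the latter is reduced, so $1 \notin Z$, contradicting $1 \in Z$. Hence $Z = \emptyset$ and $\overline{\Y^\lambda}$ is reduced. The main (and essentially only nontrivial) input is the claim that $\{1\}$ lies in every $G(\C[[t]])$-orbit closure inside $\overline{\Gr^\lambda}$, which is exactly where simple-connectedness of $G$ is needed.
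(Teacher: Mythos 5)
Your proof is correct. The forward direction matches the paper. For the converse, however, you take a genuinely different route from the paper's very short argument. The paper simply observes that every $G(\C[[t]])$-orbit $\Gr^\mu \subset \overline{\Gr^\lambda}$ meets $U \cap \overline{\Y^\lambda}$, so via the $G(\C[[t]])$-action every closed point acquires an open neighborhood isomorphic to an open subset of the reduced scheme $U \cap \overline{\Y^\lambda}$; reducedness being a local property, this finishes it. You instead work with the non-reduced locus $Z = \operatorname{Supp}(\mathcal{J})$, observe that it is closed and $G(\C[[t]])$-stable, hence a union of Schubert varieties $\overline{\Gr^\mu}$, and then invoke the dominance-order fact ($0 \le \mu$ for any dominant $\mu$ in the coroot lattice, using positivity of the inverse Cartan matrix) to force $1 \in Z$ whenever $Z \ne \emptyset$, contradicting reducedness of the stalk at $1$. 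The two arguments are close in spirit — both shuttle information from $U$ to the whole of $\overline{\Y^\lambda}$ via the group action — but they are not the same: the paper exploits that \emph{each orbit} meets $U$ and so needs only the reducedness at one point \emph{per orbit}, while your argument exploits that the \emph{single point} $1$ lies in \emph{every orbit closure} and so needs only reducedness at $1$. Your version is arguably a cleaner logical reduction and makes explicit where the simply-connected hypothesis (standing in the paper) enters, whereas the paper leaves the claim that every orbit meets $U$ unproved. One could note that the paper's assertion and your dominance-order input are essentially equivalent: $\Gr^\mu$ meets the open set $U$ precisely because $\Gr^\mu$ is dense in $\overline{\Gr^\mu} \ni 1$ and $U$ is an open neighborhood of $1$. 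So while packaged differently, the two proofs rest on the same geometric fact about the base point.
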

\begin{proof}
One direction is clear. For the other direction, suppose that $U \cap \overline{\Y^\lambda}$ is reduced. Note that $\overline{\Y^\lambda}$ has a $G(\C[[t]])$-action, and each orbit has non-empty intersection with  $U \cap \overline{\Y^\lambda}$. Using this action, we see that every closed point of $\overline{\Y^\lambda}$ has a open neighborhood that is reduced.
\end{proof}

\subsection{Intersecting Orbit Closures}

\begin{Definition}
Let $\lambda$ and $\mu$ be dominant coweights whose difference lies in the coroot lattice. Write $\lambda = \sum_i a_i \alpha_i$, and $\mu = \sum_i b_i \alpha_i$. Then define the {\bf meet} $\lambda \wedge \mu = \sum_i \min \{ a_i, b_i\} \alpha_i$.
\end{Definition}
The following lemma follows from \cite[Theorem 1.3]{Stembridge}
\begin{Lemma}
\label{lem-meet-of-dom-weights}
For $\lambda, \mu$ dominant, $\lambda\wedge \mu$ is dominant.
\end{Lemma}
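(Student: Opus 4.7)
The plan is to verify the dominance condition $\langle \lambda \meet \mu, \alpha_j^\vee \rangle \geq 0$ directly at each simple root $\alpha_j^\vee$, avoiding appeal to Stembridge's general theorem. Writing $\nu := \lambda \meet \mu = \sum_i c_i \alpha_i$ with $c_i = \min\{a_i, b_i\}$, I would first note that since the $\alpha_i$ are simple coroots and the $\alpha_j^\vee$ are simple roots, expanding gives
\[
\langle \nu, \alpha_j^\vee \rangle \;=\; \sum_i c_i\, a_{ij},
\]
where $a_{ij}$ denotes the Cartan matrix.

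Next I would fix an index $j$ and split on which of $a_j, b_j$ realizes the minimum $c_j$; by symmetry assume $c_j = a_j$. Then
\[
\langle \nu, \alpha_j^\vee \rangle - \langle \lambda, \alpha_j^\vee \rangle \;=\; \sum_i (c_i - a_i)\, a_{ij} \;=\; \sum_{i \neq j} (c_i - a_i)\, a_{ij},
\]
since the $i = j$ contribution vanishes by choice. For each $i \neq j$, the factor $c_i - a_i$ is non-positive by definition of $c_i$, and the off-diagonal Cartan entry $a_{ij}$ is non-positive. Hence each summand on the right is non-negative, so $\langle \nu, \alpha_j^\vee \rangle \geq \langle \lambda, \alpha_j^\vee \rangle \geq 0$ by dominance of $\lambda$, as required.

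I do not expect any serious obstacle. The only bit of craft is arranging the case split at $j$ so that the diagonal term $(c_j - a_j)\cdot 2$ --- the one contribution whose sign would be uncontrolled --- is forced to vanish; without this split, a naive estimate fails. Stembridge's Theorem 1.3 establishes this meet-closure property for the dominance order in considerably greater generality, but the three-line calculation above suffices for what we need.
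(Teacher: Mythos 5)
Your proof is correct, and it is genuinely different in route from the paper's: the paper simply cites Stembridge's Theorem 1.3 on the meet-semilattice structure of the dominance order, whereas you give the direct three-line verification. Your calculation is clean and uses only two facts: that the diagonal contribution $(c_j - a_j)\,a_{jj}$ is killed by arranging (WLOG) $c_j = a_j$, and that $c_i - a_i \le 0$ and $a_{ij}\le 0$ for $i\neq j$ make every remaining term nonnegative. This is essentially the classical argument one would find unpacked inside a proof of the meet-closure property, and it works verbatim for any generalized Cartan matrix since it never invokes $a_{jj}=2$ beyond the vanishing you force. The paper's choice to cite Stembridge buys brevity and defers to a standard reference that also situates the result inside a broader analysis of the dominance poset; your inline computation buys self-containedness and, arguably, transparency, at the modest cost of a paragraph. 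Either is acceptable here; a paper might prefer the citation simply to keep the body short, since the lemma is not the point of the section.

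One small presentational remark: it is worth stating explicitly that the "WLOG $c_j = a_j$" step is a pointwise (index-by-index) reduction, i.e.\ you fix $j$, then choose which of $\lambda,\mu$ to compare against based on which realizes the minimum at that index, and the roles of $\lambda$ and $\mu$ may differ for different $j$. You do say "fix an index $j$ and split," so the logic is sound; just be sure a reader does not misread it as a single global choice between $\lambda$ and $\mu$.
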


\begin{Proposition}
  \label{prop:intersecting-schubert-varieties}
Suppose $\lambda$ and $\mu$ are dominant coweights whose difference lies in the coroot lattice.
The scheme theoretic intersection $\overline{\Gr^\lambda} \cap \overline{\Gr^\mu}$ is equal to $\overline{\Gr^{\lambda \wedge \mu}}$
\end{Proposition}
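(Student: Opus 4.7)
My approach is to split the proof into two steps. First, I would verify the desired equality at the level of underlying topological spaces (that is, as reduced schemes), using the standard closure relations together with Lemma \ref{lem-meet-of-dom-weights}. Second, I would upgrade this to a scheme-theoretic equality by invoking a general reducedness result for intersections of Kac-Moody Schubert varieties.

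For the first step, I would use the standard description
\begin{equation*}
\overline{\Gr^\lambda} = \bigsqcup_{\nu \leq \lambda} \Gr^\nu,
\end{equation*}
where $\nu$ runs over dominant coweights with $\lambda - \nu$ a non-negative integer combination of simple coroots. Intersecting the analogous decompositions for $\overline{\Gr^\lambda}$ and $\overline{\Gr^\mu}$ gives, at the level of $\CC$-points,
\begin{equation*}
(\overline{\Gr^\lambda} \cap \overline{\Gr^\mu})(\CC) = \bigsqcup_{\substack{\nu \text{ dominant}\\ \nu \leq \lambda,\ \nu \leq \mu}} \Gr^\nu(\CC).
\end{equation*}
By Lemma \ref{lem-meet-of-dom-weights}, $\lambda \wedge \mu$ is itself dominant, and by the defining property of the meet a dominant coweight $\nu$ satisfies $\nu \leq \lambda$ and $\nu \leq \mu$ precisely when $\nu \leq \lambda \wedge \mu$. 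This shows that $\overline{\Gr^\lambda} \cap \overline{\Gr^\mu}$ and $\overline{\Gr^{\lambda \wedge \mu}}$ have the same underlying reduced scheme.

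For the second step, I would appeal to the standard fact that the scheme-theoretic intersection of any two Schubert varieties in a Kac-Moody partial flag variety is reduced; this is recorded for example in \cite[Chapter VII]{Kum}, and is ultimately a consequence of compatible Frobenius splittings. The spherical Schubert varieties $\overline{\Gr^\lambda}$ are themselves the Kac-Moody Schubert varieties in the affine partial flag variety $G(\CC((t)))/G(\cO)$, so this theorem applies directly and forces the scheme-theoretic intersection $\overline{\Gr^\lambda} \cap \overline{\Gr^\mu}$ to be reduced. Combined with the set-theoretic identification from the first step, this yields the proposition.

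The main point requiring care, as far as I can see, is ensuring that the cited reducedness statement is formulated in characteristic zero: in the literature it is typically proved first in positive characteristic via Frobenius splitting and then lifted by a standard spreading-out argument. Apart from this, the argument is entirely formal: the combinatorial input is contained in Lemma \ref{lem-meet-of-dom-weights}, and the geometric input in the reducedness theorem for intersections of Kac-Moody Schubert varieties.
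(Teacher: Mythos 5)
Your argument is correct and follows the same two-step strategy as the paper: set-theoretic equality from the closure relations together with Lemma \ref{lem-meet-of-dom-weights}, then scheme-theoretic reducedness via compatible Frobenius splitting of Kac-Moody Schubert varieties. The only (minor) discrepancy is the reference for the splitting: the paper cites \cite[Proposition 5.3: Assertion I]{KumSch} rather than \cite{Kum}, since the latter works in characteristic zero and does not develop Frobenius splitting.
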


\begin{proof}
On the level of sets, this follows from Lemma \ref{lem-meet-of-dom-weights}. The main content is that the scheme-theoretic intersection is reduced. The reducedness follows because Schubert varieties in a Kac-Moody partial flag variety are Frobenius split compatibly with their Schubert subvarieties \cite[Proposition 5.3: Assertion I]{KumSch}.
\end{proof}

The corresponding fact is obvious for the modular versions of these spaces:
\begin{Proposition}
  \label{prop:intersecting-moduli-version-schubert-varieties}
Scheme-theoretically, we have $\overline{\Y^\lambda} \cap \overline{\Y^\mu} = \overline{\Y^{\lambda \wedge \mu}}$
\end{Proposition}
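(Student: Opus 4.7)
The plan is to check that the two closed subfunctors of $\Gr$ defined by the given pole conditions coincide on every test scheme $S$. For the containment $\overline{\Y^{\lambda \wedge \mu}} \subseteq \overline{\Y^\lambda} \cap \overline{\Y^\mu}$, note that $\lambda - (\lambda \wedge \mu)$ and $\mu - (\lambda \wedge \mu)$ are non-negative integral combinations of simple coroots, and since $-w_0$ permutes the simple coroots, the same holds after applying $(-)^*$. Pairing with any dominant weight $\nu^\vee$ then yields $\langle (\lambda\wedge\mu)^*,\nu^\vee\rangle \le \min\bigl(\langle \lambda^*,\nu^\vee\rangle, \langle \mu^*,\nu^\vee\rangle\bigr)$, so the pole bound defining $\overline{\Y^{\lambda \wedge \mu}}$ is stronger than both of the pole bounds defining $\overline{\Y^\lambda}$ and $\overline{\Y^\mu}$.

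For the reverse containment, let $(\fF_G, \phi)$ be an $S$-point of $\overline{\Y^\lambda} \cap \overline{\Y^\mu}$, and fix a dominant weight $\nu^\vee = \sum_j c_j \varpi_j^\vee$. Writing $\lambda^* = \sum_i a_i \alpha_i$ and $\mu^* = \sum_i b_i \alpha_i$, I would decompose $\nu^\vee = \nu_1^\vee + \nu_2^\vee$ via $\nu_1^\vee := \sum_{j:\, a_j \le b_j} c_j \varpi_j^\vee$ and $\nu_2^\vee := \sum_{j:\, a_j > b_j} c_j \varpi_j^\vee$; both summands are dominant, and a direct computation gives $\langle\lambda^*, \nu_1^\vee\rangle + \langle\mu^*, \nu_2^\vee\rangle = \sum_j c_j \min(a_j, b_j) = \langle (\lambda\wedge\mu)^*, \nu^\vee\rangle$.

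Since $V(\nu^\vee) = V(\nu_1^\vee + \nu_2^\vee)$ is the Cartan component of $V(\nu_1^\vee) \otimes V(\nu_2^\vee)$, occurring with multiplicity one, the surjection $V(\nu_1^\vee) \otimes V(\nu_2^\vee) \twoheadrightarrow V(\nu^\vee)$ admits a $G$-equivariant splitting. Passing to the associated bundles of $\fF_G^0$ and $\fF_G$ and using naturality of $\phi$ under $G$-equivariant morphisms of representations, the map $\phi_{\nu^\vee}$ factors (away from $x$) as the direct summand inclusion followed by $\phi_{\nu_1^\vee} \otimes \phi_{\nu_2^\vee}$ followed by the projection. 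By the assumption $(\fF_G, \phi) \in \overline{\Y^\lambda} \cap \overline{\Y^\mu}$, the maps $\phi_{\nu_1^\vee}$ and $\phi_{\nu_2^\vee}$ extend across $x$ with poles of order at most $\langle\lambda^*, \nu_1^\vee\rangle$ and $\langle\mu^*, \nu_2^\vee\rangle$, so their tensor product extends with pole at most the sum of these, and hence $\phi_{\nu^\vee}$ also extends with that bound, which is exactly $\langle (\lambda \wedge \mu)^*, \nu^\vee \rangle$.

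The main bookkeeping will be making precise the naturality of the $\phi_{(-)}$ construction with respect to $G$-equivariant morphisms of representations and the monoidal structure on associated bundles, together with the observation that a tensor product of $\cO$-linear maps with pole bounds $k_1$ and $k_2$ has pole bound $k_1 + k_2$. Beyond that, the argument reduces to the elementary identity $\sum_j c_j \min(a_j, b_j) = \sum_{j:\, a_j \le b_j} c_j a_j + \sum_{j:\, a_j > b_j} c_j b_j$, which provides the optimal splitting of $\nu^\vee$ between $\lambda^*$ and $\mu^*$.
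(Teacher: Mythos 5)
Your proof is correct, and it supplies exactly the detail that the paper elides by declaring the result ``obvious'': both sides of the equality are subfunctors whose defining pole conditions only need to be checked on fundamental weights (via the Cartan-component-plus-tensor-of-pole-bounds argument you give), and on fundamental weights the bounds for $\overline{\Y^\lambda} \cap \overline{\Y^\mu}$ and $\overline{\Y^{\lambda\wedge\mu}}$ visibly coincide. Your slightly sharper variant, splitting $\nu^\vee = \nu_1^\vee + \nu_2^\vee$ according to whether $a_j \le b_j$ or $a_j > b_j$ rather than decomposing all the way into fundamental weights, is the same idea and works; the paper itself uses this reduction-to-fundamental-weights fact without proof in Section~\ref{sec:Type A calculations} and again in the proof of Theorem~\ref{th:idealofY}.
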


\section{Type A calculations}
\label{sec:Type A calculations}

Now let us focus on $G = SL_n$ and dominant coweights of the form $k n \varpi_1$, where $k$ is some positive integer. As dimension is insensitive to nilpotents, we have $ \dim \overline{\Y^{kn \varpi_1}} = \dim \overline{\Gr^{kn \varpi_1}} = kn(n-1)$.

Consider the intersection $\overline{\Y^{k n\varpi_1}} \cap U$, where as before $U \subset \Gr$ is the big cell in the affine Grassmannian. Because $U$ is an ind-affine scheme, the intersection $\overline{\Y^{kn\varpi_1}} \cap U$ is an ordinary affine scheme. Let $A$ be the ring of regular functions on $\overline{\Y^{kn\varpi_1}} \cap U$.

We have an identification $ U = G_1[t^{-1}] $.  Since $ G = SL_n \hookrightarrow M_n$, we can embed $ U \hookrightarrow I_n + t^{-1}M_n([t^{-1}]) $ where $ M_n $ denotes the variety of $ n \times n $ matrices.  Thus we can regard $ \overline{\Y^{kn\varpi_1}} \cap U $ as a subscheme of $ I_n + t^{-1}M_n([t^{-1}]) $.  Applying the case $ \nu^\vee = \varpi_1^\vee $ in the definition of $ \overline{\Y^{kn\varpi_1}}$, we see that $ \overline{\Y^{kn\varpi_1}} \cap U $ is a subscheme of the $kn^2 $ dimensional affine space $ I_n + t^{-1}M_n([t^{-1}])_{\le k} $ consisting of matrices whose entries are polynomials in $t^{-1} $ of degree at most $ k $ (and which evaluate to $I_n$ at $ t^{-1} = 0 $).

Let us consider variables $x_{i,j}^{(s)}$ where the indices $i$ and $j$ vary in the set $\{ 1, \ldots, n\}$, and the indices $s$ vary in the set $\{1, \ldots, k \}$. For fixed $s$, let $X^{(s)}$ be the $n \times n$ matrix whose entries are $x_{i,j}^{(s)}$. Consider matrix valued polynomials of the following form:
\begin{align}
  \label{eq:1}
 X = I_n + \sum_{s = 1}^{k} X^{(s)} \cdot t^{-s}
\end{align}
Let us also write
\begin{align}
  \label{eq:2}
  \det(X) = \sum_{r=1}^{kn} \det^{(r)} \cdot t^{-r}
\end{align}
which defines $\det^{(r)}$ as certain polynomials in the variables $x_{i,j}^{(s)}$.

\begin{Proposition}
The ring $A$ is isomorphic to $k[x_{i,j}^{(s)}]$ modulo the ideal generated by the polynomials $\mathrm{det}^{(r)}$ where $r \in \{1, \ldots, kn\}$.
\end{Proposition}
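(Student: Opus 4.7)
The proposition is equivalent to saying that, inside the ambient $kn^2$-dimensional affine space $I_n + t^{-1}M_n([t^{-1}])_{\leq k} = \Spec \C[x_{i,j}^{(s)}]$, the closed subscheme $\overline{\Y^{kn\varpi_1}}\cap U$ is cut out precisely by the $kn$ equations $\det^{(r)}=0$, i.e., by the single polynomial identity $\det(X) = 1$. The plan is to verify this scheme-theoretic equality functorially, by showing both containments on $S$-points.

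One direction is immediate: for any $S$-point of $\overline{\Y^{kn\varpi_1}}\cap U \subset U = G_1[t^{-1}]$, the matrix $X$ lies in $SL_n(\O_S[t^{-1}])$, so $\det(X) = 1$ identically and each $\det^{(r)}$ vanishes. For the reverse direction, take a functorial $S$-point $X = I_n + \sum_{s=1}^k X^{(s)}t^{-s}$ of the ambient affine space satisfying $\det(X) = 1$. Then $X$ defines an $S$-point of $U$, and what remains is to verify, for every dominant weight $\nu^\vee$, the defining condition of $\overline{\Y^{kn\varpi_1}}$: namely that $\phi_{\nu^\vee}$ is regular. Since we are over $\C$ and $G = SL_n$, every $V(\nu^\vee)$ with $\nu^\vee = \sum_i c_i \varpi_i^\vee$ sits as a direct summand (Cartan component) in $\bigotimes_i V(\varpi_i^\vee)^{\otimes c_i}$. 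Because pole orders add under tensor product and are preserved under restriction to a direct summand, and because both $\langle (kn\varpi_1)^*, \nu^\vee\rangle$ and the bound we will obtain are additive in $\nu^\vee$, it suffices to check the condition for $\nu^\vee = \varpi_i^\vee$.

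For $\nu^\vee = \varpi_i^\vee$, recall $V(\varpi_i^\vee) = \Lambda^i V(\varpi_1^\vee)$, so $\phi_{\varpi_i^\vee}$ is given by $\Lambda^i X$, whose matrix entries are the $i \times i$ minors of $X$. Since the entries of $X$ are polynomials of degree at most $k$ in $t^{-1}$, each such minor is a polynomial of degree at most $ki$ in $t^{-1}$. On the other hand, a direct computation (e.g.\ via the realization of $SL_n$-(co)weights as $n$-tuples modulo the all-ones vector) gives $\langle (kn\varpi_1)^*, \varpi_i^\vee\rangle = \langle kn\varpi_{n-1}, \varpi_i^\vee\rangle = ki$, which matches the minor-degree bound exactly. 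Hence the $\varpi_i^\vee$-condition is automatically satisfied, completing the reverse containment. The substantive point of the argument is this precise coincidence between the minor-degree bound and the required pole-order bound; once this is pinned down, the remaining bookkeeping is routine and no deeper obstacle is anticipated.
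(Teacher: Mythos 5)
Your proof is correct and follows essentially the same route as the paper's: show containment one way by observing the equations $\det^{(r)}$ cut out $U$ inside the ambient affine space, then establish the reverse containment by reducing the $\overline{\Y^{kn\varpi_1}}$-condition to fundamental weights $\varpi_\ell^\vee$ and noting that $\ell\times\ell$ minors of $X$ automatically have degree at most $k\ell = \langle (kn\varpi_1)^*, \varpi_\ell^\vee\rangle$. You spell out the reduction to fundamental weights (via Cartan components of tensor products) and the pairing computation $\langle kn\varpi_{n-1},\varpi_i^\vee\rangle = ki$ in more detail, but the underlying argument is the same.
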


\begin{proof}
The equations $ \det^{(r)} $ define the intersection $ U \cap \big(I_n + t^{-1}M_n([t^{-1}])_{\le k}\big)$.  By the above analysis, this scheme contains $ \overline{\Y^{kn\varpi_1}} \cap U $.  To see that there are no additional equations, we note that it suffices to check the condition in the definition of $ \overline{\Y^{kn\varpi_1}} $ for $ \nu^\vee =\varpi_\ell^\vee$ a fundamental weight.  This amounts to show that all $ \ell \times \ell $ minors of $ X $ have degree (in $t^{-1}$) at most $ kl = -\langle kn\varpi_1^*, \varpi_\ell^\vee \rangle $, for $\ell = 2, \dots, n-1 $.  But this is immediate from the definition.
\end{proof}

In particular, we see that $\Spec A$ is cut out by $kn$ equations in an affine space of dimension $kn^2$. Because $\overline{\Y^{kn\varpi_1}}$ has dimension $kn(n-1)$, we see that $\Spec A$ is a complete intersection inside this affine space. In particular, it is a Cohen-Macaulay scheme. As in the proof of Proposition \ref{prop:bigcellreduced}, the $ G(\C[[t]]) $ action proves that $\overline{\Y^{kn\varpi_1}}$ is also Cohen-Macauley.  Furthermore, applying the unique non-trivial diagram automorphism of $SL_n$, we obtain that $\overline{\Y^{kn \varpi_{n-1}}}$ is also Cohen-Macaulay.
Applying Theorem \ref{thm:reduced-iff-cohenmacaulay}, we have the following.

\begin{Corollary}
For $G = SL_n$ and any integer $k \geq 1$, both $\overline{\Y^{kn\varpi_1}}$ and $\overline{\Y^{kn\varpi_{n-1}}}$ are reduced.
\end{Corollary}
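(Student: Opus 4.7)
The plan is to assemble the pieces already set up in the preceding paragraph. The Corollary is essentially a clean conclusion from the complete intersection observation combined with the earlier generic reducedness result. First, I would record the dimension count: the scheme $\overline{\Y^{kn\varpi_1}} \cap U$ sits inside the $kn^2$-dimensional affine space $I_n + t^{-1}M_n([t^{-1}])_{\le k}$, cut out by the $kn$ coefficient equations $\det^{(r)} = 0$ coming from \eqref{eq:2}. Since $\overline{\Y^{kn\varpi_1}}$ has the same dimension as $\overline{\Gr^{kn\varpi_1}}$, namely $kn(n-1) = kn^2 - kn$, each of these $kn$ equations must cut down the dimension by exactly one, so $\Spec A$ is a complete intersection and hence Cohen-Macaulay.

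Next, I would globalize Cohen-Macaulayness from $\Spec A = \overline{\Y^{kn\varpi_1}} \cap U$ to all of $\overline{\Y^{kn\varpi_1}}$. Exactly as in the proof of Proposition \ref{prop:bigcellreduced}, every closed point of $\overline{\Y^{kn\varpi_1}}$ can be translated by the $G(\C[[t]])$-action into the big cell $U$, so a Cohen-Macaulay open neighborhood exists around each closed point. This transfers the Cohen-Macaulay property globally.

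Then I would invoke Theorem \ref{thm:reduced-iff-cohenmacaulay}: since $\overline{\Y^{kn\varpi_1}}$ is Cohen-Macaulay and, by Corollary \ref{cor:genreduced}, generically reduced, it is reduced. For the second assertion, I would apply the unique non-trivial diagram automorphism of $SL_n$, which sends $\varpi_1 \mapsto \varpi_{n-1}$. This automorphism is an isomorphism at the level of the affine Grassmannian that intertwines the conditions defining $\overline{\Y^{kn\varpi_1}}$ and $\overline{\Y^{kn\varpi_{n-1}}}$, giving an isomorphism of schemes, and reducedness transfers immediately.

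The only point requiring care is verifying that the complete intersection argument really uses the right dimension: the upper bound on $\dim \Spec A$ comes from it being cut out by $kn$ equations, and the lower bound comes from $\dim \overline{\Y^{kn\varpi_1}} = kn(n-1)$. These two match exactly, forcing the equality that makes $\Spec A$ a complete intersection. This equality is the one technical point that must be tracked, but it follows from what is already written. There is no genuine obstacle remaining; the Corollary is a bookkeeping consequence of the preceding paragraph together with Theorem \ref{thm:reduced-iff-cohenmacaulay}.
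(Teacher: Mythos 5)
Your proposal is correct and follows the paper's argument essentially step for step: the complete-intersection count on the big cell gives Cohen-Macaulayness of $\Spec A$, the $G(\C[[t]])$-action globalizes it, Theorem \ref{thm:reduced-iff-cohenmacaulay} (which already has generic reducedness from Corollary \ref{cor:genreduced} baked in) gives reducedness, and the diagram automorphism handles $\varpi_{n-1}$. The only cosmetic difference is that you apply the diagram automorphism after concluding reducedness, whereas the paper transfers Cohen-Macaulayness first; these are logically equivalent.
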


We learned of this method of proving reducedness of a scheme from a similar argument in \cite[{\it Proofs of Theorems 2 and 3} ]{Knu}.
 
\section{Generating more cases}
\label{section: Generating more cases}

We use two facts to prove the reducedness of many more cases. These two facts are:

\begin{enumerate}
\item The scheme-theoretic intersection of Schubert varieties is reduced, and moreover $\overline{\Gr^\lambda} \cap \overline{\Gr^\mu} = \overline{\Gr^{\lambda\wedge\mu}}$ by Proposition \ref{prop:intersecting-schubert-varieties}.  By Proposition \ref{prop:intersecting-moduli-version-schubert-varieties}, we have $\overline{\Y^\lambda} \cap \overline{\Y^\mu} = \overline{\Y^{\lambda \meet \mu}}$

\item If $\overline{\Y^{\mu_1+\mu_2}}$ is reduced for $\mu_1$ and $\mu_2$ two dominant coweights, then $\overline{\Y^{\mu_1}}$ and $\overline{\Y^{\mu_2}}$ are both reduced by Proposition \ref{prop-reduced-if}. Given a dominant coweight $\lambda$, we say that a dominant coweight $\mu$ is a \emph{summand} of $\lambda$ if $\lambda-\mu$ is dominant.
\end{enumerate}

Thus we see that if we know reducedness for any set of dominant coweights, then we know it also for the set all dominant coweights obtained by repeatedly taking summands and meets.

Let $\mathcal{X}$ be the smallest set of dominant coweights for $SL_n$ that contains $\{ k n \varpi_1 : k >0 \}$ and $\{ k n \varpi_{n-1}: k >0 \}$ and is closed under summands and meets. Then we know the truth of the reducedness conjecture for all elements of $\mathcal{X}$.

\begin{Proposition}
  The set $\mathcal{X}$ consists of all dominant coweights that can be written in the form $a \varpi_i + b \varpi_{i+1}$ for $a, b \geq 0$ and $i \in \{ 1, \ldots, {n-2} \}$.
\end{Proposition}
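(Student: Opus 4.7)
The plan is to show $\mathcal{X}$ equals the set $\mathcal{S} := \{a\varpi_i + b\varpi_{i+1} : a, b \geq 0,\ 1 \leq i \leq n-2\}$ by using an explicit combinatorial encoding of the coweights in $\mathcal{S}$. The central observation, which drives both inclusions, is that for $\lambda = a\varpi_i + b\varpi_{i+1} \in \mathcal{S}$, a direct computation using the formula $c_{ik} = \min(k(n-i),\, i(n-k))/n$ for the inverse Cartan matrix of $SL_n$ expresses the coefficients of $\lambda$ in the coroot basis (with $\lambda_0 = \lambda_n = 0$) as
\begin{equation*}
\lambda_k \;=\; \min\!\bigl(k\alpha,\ (n-k)\beta\bigr), \qquad k \in \{0, 1, \dots, n\},
\end{equation*}
where $\alpha = \tfrac{1}{n}\bigl(a(n-i) + b(n-i-1)\bigr)$ and $\beta = \tfrac{1}{n}\bigl(ai + b(i+1)\bigr)$. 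Conversely, any dominant coweight admitting such a ``min-of-lines'' presentation has support contained in two consecutive integers, hence lies in $\mathcal{S}$.

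For the inclusion $\mathcal{X} \subseteq \mathcal{S}$, I check that $\mathcal{S}$ contains the generators $kn\varpi_1, kn\varpi_{n-1}$ and is stable under the two operations. Summands of $\lambda \in \mathcal{S}$ remain in $\mathcal{S}$ because $\lambda - \mu$ dominant forces the fundamental-basis support of $\mu$ to lie inside that of $\lambda$. For meets, the distributive identity
\begin{equation*}
\min\!\bigl(\min(k\alpha_1, (n-k)\beta_1),\, \min(k\alpha_2, (n-k)\beta_2)\bigr) \;=\; \min\!\bigl(k\min(\alpha_1,\alpha_2),\, (n-k)\min(\beta_1,\beta_2)\bigr),
\end{equation*}
which holds because $k, n-k \geq 0$, shows that the meet of two elements of $\mathcal{S}$ retains the min-of-lines form; together with Lemma \ref{lem-meet-of-dom-weights}, which guarantees dominance, the meet lies in $\mathcal{S}$. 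Minimality of $\mathcal{X}$ then gives $\mathcal{X} \subseteq \mathcal{S}$.

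For the reverse inclusion, I realize each nonzero $\lambda = a\varpi_i + b\varpi_{i+1} \in \mathcal{S}$ explicitly as a meet of two elements of $\mathcal{X}$. Set $p = ai + (i+1)b$ and $q = (n-i)a + (n-i-1)b$; these are non-negative integers with $p+q = (a+b)n$, so the image of $p\varpi_1 - q\varpi_{n-1}$ in $P^\vee/Q^\vee \cong \mathbb{Z}/n\mathbb{Z}$ is $p + q \equiv 0$, and the meet $p\varpi_1 \wedge q\varpi_{n-1}$ is defined. Both $p\varpi_1$ and $q\varpi_{n-1}$ are summands of $kn\varpi_1,\, kn\varpi_{n-1}$ for $k$ large, hence belong to $\mathcal{X}$. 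The min-of-lines parameters of $p\varpi_1$ and $q\varpi_{n-1}$ are $\bigl(p(n-1)/n,\, p/n\bigr)$ and $\bigl(q/n,\, q(n-1)/n\bigr)$; a short numerical check (using $1 \leq i \leq n-2$) yields $p(n-1) \geq q$ and $q(n-1) \geq p$, so the distributive identity above shows that $p\varpi_1 \wedge q\varpi_{n-1}$ has parameters $\bigl(q/n, p/n\bigr)$, which coincide with $(\alpha, \beta)$ for $\lambda$. Hence $\lambda \in \mathcal{X}$. The only creative step is the min-of-lines encoding of $\mathcal{S}$; once identified, the rest reduces to routine algebraic manipulations, with no meaningful obstacle remaining.
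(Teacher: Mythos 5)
Your proof is correct and takes essentially the same approach as the paper: the ``min-of-lines'' encoding $\lambda_k = \min(k\alpha, (n-k)\beta)$ is precisely the paper's picture of sampling at integer points a triangle-shaped piecewise-linear function on $[0,n]$ (two line segments rising from the origin to an apex and falling back to $(n,0)$), and both arguments establish $\mathcal{X}\subseteq\mathcal{S}$ via closure of $\mathcal{S}$ under meets and summands and the reverse via meets of suitable multiples of $\varpi_1$ and $\varpi_{n-1}$. Your reverse inclusion is somewhat more explicit than the paper's, realizing $\lambda$ directly as $p\varpi_1 \wedge q\varpi_{n-1}$ with $p = ai + (i+1)b$ and $q = (n-i)a + (n-i-1)b$, where the paper only asserts that appropriate $\ell, \ell'$ produce some $a'\varpi_i + b'\varpi_{i+1}$ with large coefficients and then appeals again to summands. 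One small point worth adding: to justify that $p\varpi_1$ and $q\varpi_{n-1}$ are genuine coweights of $SL_n$ (and hence legitimate summands of $kn\varpi_1$, $kn\varpi_{n-1}$), note that $p = ai + b(i+1) \equiv 0 \pmod n$ and likewise $q \equiv 0 \pmod n$, which follows precisely because $\lambda$ lies in the coroot lattice.
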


\begin{proof}
Consider the set $\mathcal{T}$ of piecewise linear functions $f: [0,n]\rightarrow \R_{\geq 0}$ whose graphs are triangles: $f$ consists of the straight lines from $(0,0)$ to $(a,b)$ to $(n,0)$, for some $a\in [1,n-1]$ and $b\geq 0$ (see Figure \ref{figure: triangles}).  Observe that for $f_1, f_2 \in \mathcal{T}$ we have $\min\{ f_1, f_2\} \in \mathcal{T}$.

Define a  ``discrete sampling'' map $ \pi : \mathcal{T} \rightarrow \mathfrak{h}_\R$ by $f \mapsto \sum_{i=1}^{n-1} f(i) \alpha_i$.  Note that
\begin{enumerate}
\item $ \pi( \min\{f_1,f_2\}) = \pi(f_1) \wedge \pi(f_2) $
\item $\varpi_i$ is the image of the element with apex at $(i, \tfrac{i(n-i)}{n})$
\item If the apex $(a,b)$ of $f$ has $i\leq a\leq i+1$, then $\pi(f)\in \R \varpi_i \oplus \R \varpi_{i+1}$.
\end{enumerate}
Now suppose we take two coweights $$\lambda_1, \lambda_2 \in\{a \varpi_i + b\varpi_{i+1} : i\in I, \ a,b\geq 0\}$$  Then both are in the image of $\mathcal{T}$, and by properties (1) and (3) so is $\lambda_1\wedge\lambda_2$.  Hence the set of coweights of this form is closed under meets.  It is clearly closed under summands.

On the other hand, for each $i$ and $N\geq 0$ there is some $a \varpi_i + b \varpi_{i+1}\in \mathcal{X}$ with $a, b \geq N$.  Indeed, by appropriately choosing $\ell, \ell'$, such elements can be produced of the form $\ell \varpi_1 \wedge \ell' \varpi_{n-1}$.  Since $\mathcal{X}$ is closed under taking summands, the claim follows.
\end{proof}

\begin{figure}
\begin{tikzpicture}[scale=1]
\draw[->] (-0.5,0) -- (6,0);
\draw[->] (0,-0.5) -- (0,3);

\draw[-] (0,0) -- (4,2);
\draw[-] (4,2) -- (5,0);
\draw node at (5,-0.3) {\small $(n,0)$};
\draw node at (4,2.3) {\small $(a,b)$};
\end{tikzpicture}
\caption{\label{figure: triangles}
The graph of the element of $\mathcal{T}$ with apex at $(a,b)$.
}
\end{figure}
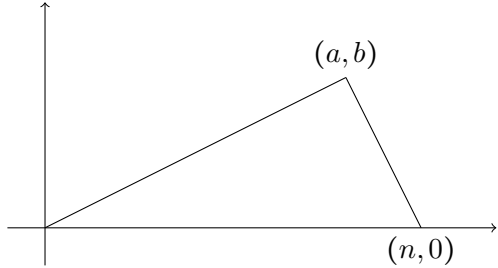

\begin{Theorem} \label{theorem: main theorem}
For $G= SL_n$ and all $\lambda \in \mathcal{X}$, the scheme $\overline{\Y^\lambda}$ is reduced.  In particular, for $\lambda \in \mathcal{X}$ we have $\overline{\Y^\lambda} = \overline{\Gr^\lambda}$.
\end{Theorem}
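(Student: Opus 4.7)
The proof should be essentially formal at this point, since all the hard analytic and algebraic work has been done in the preceding sections. My plan is to set up a straightforward induction using the inductive description of $\mathcal{X}$ as the closure of $\{kn\varpi_1 : k>0\} \cup \{kn\varpi_{n-1} : k>0\}$ under the two operations of summands and meets.

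First, I would dispense with the second assertion. Since $\overline{\Y^\lambda}$ and $\overline{\Gr^\lambda}$ always agree on $F$-points for any field $F$ (as remarked at the end of Section~\ref{Affine grassmannian and G(O) orbit closures}), the closed immersion $\overline{\Gr^\lambda} \hookrightarrow \overline{\Y^\lambda}$ is a bijection on points. Therefore, once reducedness of $\overline{\Y^\lambda}$ is established, the equality $\overline{\Y^\lambda} = \overline{\Gr^\lambda}$ is automatic.

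For the reducedness statement, I would argue by induction on the construction of $\mathcal{X}$. Each element $\lambda \in \mathcal{X}$ is obtained after finitely many steps, each of which is either (i) taking a base element $kn\varpi_1$ or $kn\varpi_{n-1}$, (ii) taking a summand $\lambda$ of some previously constructed $\lambda+\mu$, or (iii) taking a meet $\lambda_1 \wedge \lambda_2$ of two previously constructed elements. For the base step, the Corollary at the end of Section~\ref{sec:Type A calculations} establishes reducedness of $\overline{\Y^{kn\varpi_1}}$ and $\overline{\Y^{kn\varpi_{n-1}}}$. For step (ii), Proposition~\ref{prop-reduced-if} states precisely that reducedness of $\overline{\Y^{\lambda+\mu}}$ implies reducedness of $\overline{\Y^\lambda}$. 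For step (iii), observe that Proposition~\ref{prop:intersecting-moduli-version-schubert-varieties} gives $\overline{\Y^{\lambda_1}} \cap \overline{\Y^{\lambda_2}} = \overline{\Y^{\lambda_1 \wedge \lambda_2}}$; since $\overline{\Y^{\lambda_i}}$ are reduced by the inductive hypothesis, their scheme-theoretic intersection (taken inside the affine Grassmannian, which is reduced) is reduced, so $\overline{\Y^{\lambda_1 \wedge \lambda_2}}$ is reduced. Note that the meet $\lambda_1 \wedge \lambda_2$ is dominant by Lemma~\ref{lem-meet-of-dom-weights}, so this is well-defined.

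There is no real obstacle here: the inductive step is dispatched by direct citation of the results already assembled. The substance of the theorem lies upstream, in the complete intersection argument of Section~\ref{sec:Type A calculations} that produces the base cases, in the one-parameter family argument of Section~\ref{sec:beilinson-drinfeld-deformation} that yields the summand property, and in the Frobenius splitting fact used to establish reducedness of the scheme-theoretic intersection of Schubert varieties. Given the characterization of $\mathcal{X}$ proved in the preceding Proposition, this also yields reducedness for every $\lambda$ of the form $a\varpi_i + b\varpi_{i+1}$ with $a, b \geq 0$ and $1 \leq i \leq n-2$, completing the proof.
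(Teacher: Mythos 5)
Your overall strategy matches the paper's: induction over the construction of $\mathcal{X}$, with the base cases from Section~\ref{sec:Type A calculations}, the summand step from Proposition~\ref{prop-reduced-if}, and the meet step from the two intersection Propositions. Your dispatch of the second assertion (equality on $F$-points plus reducedness forces scheme equality) is also correct.

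However, your step (iii) as written invokes a false general principle. You claim that since $\overline{\Y^{\lambda_1}}$ and $\overline{\Y^{\lambda_2}}$ are reduced and sit inside the reduced affine Grassmannian, their scheme-theoretic intersection is automatically reduced. This is not true in general: for instance, $V(y)$ and $V(y - x^2)$ are reduced curves in $\mathbb{A}^2$, but their scheme-theoretic intersection is $\Spec \CC[x]/(x^2)$, which is non-reduced. Reducedness of scheme-theoretic intersections of Schubert varieties is a nontrivial theorem, Proposition~\ref{prop:intersecting-schubert-varieties}, coming from Frobenius splitting. The correct route is: by the inductive hypothesis, $\overline{\Y^{\lambda_i}} = \overline{\Gr^{\lambda_i}}$ for $i = 1, 2$; then $\overline{\Y^{\lambda_1 \wedge \lambda_2}} = \overline{\Y^{\lambda_1}} \cap \overline{\Y^{\lambda_2}} = \overline{\Gr^{\lambda_1}} \cap \overline{\Gr^{\lambda_2}} = \overline{\Gr^{\lambda_1 \wedge \lambda_2}}$, where the last equality is Proposition~\ref{prop:intersecting-schubert-varieties}; since the right side is reduced by definition, so is the left. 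Your closing paragraph does name the Frobenius splitting fact as an ingredient, so this is likely a slip rather than a misunderstanding, but the argument as stated does not establish reducedness at the meet step.
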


\begin{Remark}
  For $SL_2$ and $SL_3$ this is everything.
\end{Remark}

\section{Ideal generators for affine Grassmannian slices}
\label{section: Ideal generators for Lusztig slices}

Our goal is now to study the conjecture \cite[Conjecture 2.20]{KWWY} describing the ideal generators for affine Grassmannian slices.  We will give a positive answer in those cases covered by Theorem \ref{theorem: main theorem}.

\begin{Remark}
To follow the notation of \cite{KWWY}, we will work now in the thick affine Grassmannian $G((t^{-1}))/G[t]$.  This does not affect results since we are studying subschemes of $\overline{\Y^\lambda}$, a finite-type scheme naturally embedded in both the thick and the thin affine Grassmannian.
\end{Remark}

Let $ \mu $ be a dominant coweight with $ \mu \le \lambda$.   Let $ \Gr_\mu = \Gm t^{w_0 \mu} $ and let $ \Gr^\lambda_\mu = \overline{\Gr^\lambda} \cap \Gr_\mu $.  This is a transverse slice to $ \overline{\Gr^\lambda} $ at the point $ t^{w_0 \mu} $.   It is known by \cite[Proposition 5.3]{KumSch} that $ \Gr^\lambda_\mu $ is reduced.  Also let $\Y_\mu^\lambda = \overline{\Y^\lambda} \cap \Gr_\mu$.

Recall from \cite[Section 3]{KWWY}, that $ \Gm $ is a Poisson algebraic group, that  $ \Gr_\mu $ is a Poisson homogeneous space for $ \Gm $, and that both $\Y_\mu^\lambda$ and $ \Gr^\lambda_\mu $ are Poisson subschemes of $ \Gr_\mu$.

Let $ V $ be a representation of $ G $, let $ v \in V, \beta \in V^* $.  Then we have a matrix coefficient $ \Delta_{\beta, v} \in \O(G) $.  The group $G_1[[t^{-1}]]$ acts on $V[[t^{-1}]]$, and for $ s \in \N $ we define $ \Delta_{\beta, v}^{(s)} \in \O(\Gm) $ by
$$ \Delta_{\beta,v}(g) = \sum_{s \geq 0 }\Delta_{\beta,v}^{(s)}(g) t^{-s} $$
It is useful to encode these functions in the form of formal series, and we will denote
$$ \Delta_{\beta,\gamma}(u) := \sum_{s\geq 0} \Delta_{\beta,\gamma}^{(s)} u^{-s} \ \in \ \O\left(G_1[[t^{-1}]]\right) [[ u^{-1}]] $$
In particular, the Poisson bracket on $\O(G_1[[t^{-1}]])$ is given by
\begin{equation} \label{eq: poisson bracket of minors}
(u-v) \left\{ \Delta_{\beta_1,\gamma_1}(u), \Delta_{\beta_2,\gamma_2}(v) \right\} = \sum_a \Big( \Delta_{\beta_1, J_a \gamma_1}(u) \Delta_{\beta_2, J^a \gamma_2}(v) - \Delta_{J_a \beta_1, \gamma_1}(u) \Delta_{J^a \beta_2, \gamma_2}(v) \Big)
\end{equation}
where $\{J_a\}$, $\{J^a\}$ are dual bases for $\g$ with respect to the bilinear form $(\cdot,\cdot)$, which in practice we will take to be (see Section \ref{Notation})
$$ \{ h_i, e_{\alpha^\vee}, f_{\alpha^\vee} : i\in I, \alpha^\vee \in \Phi_+^\vee\}, \ \ \ \{h^i, d_{\alpha^\vee} f_{\alpha^\vee}, d_{\alpha^\vee} e_{\alpha^\vee} : i\in I, \alpha^\vee \in \Phi_+^\vee \} $$
We follow here the convention of \cite{KWWY}: this Poisson structure corresponds to the $r$-matrix $\Omega/(u-v)$ on $\operatorname{Lie}\big(\Gm\big) = t^{-1} \g[[t^{-1}]]$, where $\Omega$ is the Casimir 2-tensor for $(\cdot,\cdot)$.

Consider the irreducible fundamental representation $V(\varpi_i^\vee)$ for $\g$.  Fix a highest weight vector $v_i\in V(\varpi_i^\vee)$, and a lowest weight dual vector $v_i^\ast \in V(\varpi_i^\vee)^\ast$.  Define
$$ f_i(u)  = \sum_{s>0}f_i^{(s)} u^{-s} :=  d_i^{\phantom{.}-1/2} \frac{\Delta_{v_i^\ast,f_i v_i}(u)}{\Delta_{v_i^\ast, v_i}(u)} $$
As explained in \cite[Section 2G]{KWWY}, the matrix coefficients of $V(\varpi_i^\vee)$ are well-defined on $G_1[[t^{-1}]]$ even if $G$ is not simply-connected

\begin{Theorem}[Theorem 3.12 in \cite{KWWY}] \label{subalgebra}
The subalgebra $\O(\Gr_\mu) \subset \O(\Gm)$ is Poisson generated by the elements
$$ \Delta_{e_i v_i^\ast, v_i}^{(s)}  \text{ and } \Delta_{v_i^\ast, v_i}^{(s)} \text{ for all } i\in I, s>0,$$
$$ \text{ and } f_i^{(s)} \text{ for all } i\in I, s>\mu_i$$
where $ \mu_i = \langle \mu^*, \alpha_i^\vee \rangle $.
\end{Theorem}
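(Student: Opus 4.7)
The plan is to identify $\O(\Gr_\mu)$ with the ring of right $H_\mu$-invariants inside $\O(\Gm)$, where $H_\mu := \mathrm{Stab}_{\Gm}(t^{w_0\mu})$, and then to verify that the listed elements both lie in this subring and Poisson-generate it.  The first task is to describe $H_\mu$ explicitly: writing $g = I + \sum_{k \geq 1} g_k t^{-k} \in \Gm$ and decomposing each $g_k$ into root-space components $X_\beta \in \g_\beta$, the condition $t^{-w_0\mu} g t^{w_0\mu} \in G[[t]]$ forces the $X_\beta$-component of $g_k$ to vanish whenever $k > \langle \mu^*, \beta \rangle$, giving $\mathrm{Lie}(H_\mu)$ as a completely explicit subspace of $\mathrm{Lie}(\Gm)$.

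Next I would verify that each of the proposed functions is $H_\mu$-invariant.  The matrix coefficients $\Delta_{v_i^*,v_i}^{(s)}$ and $\Delta_{e_i v_i^*,v_i}^{(s)}$ pair covectors against the highest-weight vector $v_i$ of $V(\varpi_i^\vee)$, and the weight analysis of the previous step shows directly that the action of $H_\mu$ preserves them.  For $f_i^{(s)}$ with $s > \mu_i$, the key observation is that the leading $\mu_i$ coefficients of the ratio $\Delta_{v_i^*, f_i v_i}(u) / \Delta_{v_i^*, v_i}(u)$ are rigidly determined by the $t^{w_0\mu}$ factor, so only the tail with $s > \mu_i$ yields genuinely independent $H_\mu$-invariant functions on $\Gr_\mu$.

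The main content is the generation claim.  Using the bracket formula \eqref{eq: poisson bracket of minors}, iterated Poisson brackets of $\Delta_{e_i v_i^*, v_i}(u)$ and $\Delta_{v_i^*, v_i}(u)$ against the $f_j^{(r)}$'s produce matrix coefficients of the form $\Delta_{\beta, v_i}(u)$ for $\beta$ ranging over a spanning set of $V(\varpi_i^\vee)^*$, and subsequent brackets reach $\Delta_{\beta, \gamma}(u)$ for $\gamma$ throughout the representation.  Since matrix coefficients of all fundamental representations Poisson-generate $\O(\Gm)$ (a Peter--Weyl-type fact for the thick loop group), restricting to $H_\mu$-invariants then recovers all of $\O(\Gr_\mu)$.

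I expect this generation step to be the main obstacle: the induction must simultaneously track the weight of the vectors reached and the order $s$ in $t^{-1}$, and one has to verify that every bracket stays inside the $H_\mu$-invariant subring rather than producing something only well-defined on $\Gm$.  This is in essence the classical-limit counterpart of the standard generation theorem for the Yangian in its Drinfeld ``new'' presentation, and a careful double induction on weight and on $s$ is likely the right vehicle to push it through.
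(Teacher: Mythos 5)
This theorem is quoted directly from \cite[Theorem 3.12]{KWWY} and is not proved in the present paper, so there is no internal proof to compare your attempt against; I can only assess the proposal on its own merits. Your high-level plan --- identify $\O(\Gr_\mu)$ with the right-$H_\mu$-invariants in $\O(\Gm)$ where $H_\mu = \mathrm{Stab}_{\Gm}(t^{w_0\mu})$, describe $\mathrm{Lie}(H_\mu)$ via root-valuation conditions, verify invariance of the listed elements, and then prove generation --- is a plausible skeleton and correctly singles out the generation step as the crux.

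That generation step, however, contains a genuine gap rather than a merely unfinished computation. The concluding inference, ``since matrix coefficients of all fundamental representations Poisson-generate $\O(\Gm)$, restricting to $H_\mu$-invariants recovers all of $\O(\Gr_\mu)$,'' is a non-sequitur: a Poisson-generating set for an ambient algebra does not descend to a Poisson-generating set for an invariant subalgebra, so you must show directly that every $H_\mu$-invariant matrix coefficient is reached. Moreover, your description of what the brackets actually produce is reversed. By the bracket formula (compare the paper's Lemma \ref{lemma: bracket with f_i}), bracketing $\Delta_{v_i^\ast,\gamma}(v)$ against $f_j^{(k+1)}$ lowers the weight of the \emph{right} entry $\gamma$ to $f_j\gamma$ while keeping $v_i^\ast$ fixed; moving the \emph{left} entry $\beta$ requires brackets with $\Delta_{e_j v_j^\ast,v_j}^{(1)}$, as in the paper's Lemma \ref{lemma: Poisson closed}. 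Since only $f_j^{(r)}$ with $r>\mu_j$ are available, each such bracket shifts the attainable order $s$ by the threshold $\mu_j$, and one must carry this $\mu$-dependent bookkeeping through a genuine double induction on weight and on $s$ and then confirm that the output spans all of $\O(\Gm)^{H_\mu}$. Until that induction is actually carried out with the $\mu$-shifted thresholds tracked, the claim is not established.
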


Let $ J_\mu^\lambda $ denote the Poisson ideal of $ \O(\Gr_\mu) $ which is generated by $\Delta_{v_i^\ast,v_i}^{(s)}$ for $i\in I$ and $s>m_i$, where $ \lambda - \mu = \sum_{i \in I} m_i \alpha_{i^*} $.  In \cite[Proposition 2.21]{KWWY}, the authors proved that the set-theoretic vanishing locus of $ J_\mu^\lambda $ is $ \Gr^\lambda_\mu $ and they conjectured the following:

\begin{Conjecture} \label{conj: KWWY conj}
$J_\mu^\lambda $ is the ideal of $ \Gr^\lambda_\mu $.
\end{Conjecture}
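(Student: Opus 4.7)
The plan is to prove the conjecture conditionally on the reducedness of $\overline{\Y^\lambda}$ supplied by Theorem~\ref{theorem: main theorem}. The goal is the scheme-theoretic identity
\[
V(J_\mu^\lambda) = \overline{\Y^\lambda} \cap \Gr_\mu,
\]
since once this is established, the reducedness $\overline{\Y^\lambda} = \overline{\Gr^\lambda}$ immediately collapses the right-hand side to $\overline{\Gr^\lambda} \cap \Gr_\mu = \Gr^\lambda_\mu$, and because $\Gr^\lambda_\mu$ is reduced by \cite[Proposition 5.3]{KumSch}, $J_\mu^\lambda$ must be its radical defining ideal.

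To produce this identification, my first step would be to pass to an ordinary (non-Poisson) ideal. Let $I_\mu^\lambda \subset \O(\Gr_\mu)$ be the ordinary ideal generated by the same elements $\Delta_{v_i^*, v_i}^{(s)}$ for $i\in I$ and $s > m_i$; tautologically $I_\mu^\lambda \subset J_\mu^\lambda$. I would first show that $V(I_\mu^\lambda) = \overline{\Y^\lambda}\cap \Gr_\mu$ scheme-theoretically. This is a direct translation of the pole-order condition defining $\overline{\Y^\lambda}$: on the point $g t^{w_0\mu}$ with $g \in \Gm$, the expansion $\Delta_{v_i^*, v_i}(g t^{w_0\mu}) = \sum_{s} \Delta_{v_i^*, v_i}^{(s)}(g)\, t^{-s}$ records the pole of the $\varpi_i^\vee$-component of the trivialization, and the $\overline{\Y^\lambda}$-condition is precisely that the coefficients for $s > \langle \lambda^*-\mu^*, \varpi_i^\vee\rangle = m_i$ vanish. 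Restricting to fundamental weights is enough, just as in Section~\ref{sec:Type A calculations}: the higher $\nu^\vee$-Pl\"ucker pole bounds follow automatically from those for $\varpi_i^\vee$.

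The main obstacle is the reverse inclusion $J_\mu^\lambda \subset I_\mu^\lambda$, i.e.\ showing that $I_\mu^\lambda$ is already Poisson-closed. Here the bracket formula \eqref{eq: poisson bracket of minors} is the essential tool. Because $v_i$ and $v_i^*$ are highest-weight vectors in $V(\varpi_i^\vee)$ and $V(\varpi_i^\vee)^*$ respectively, we have $e_{\alpha^\vee} v_i = 0 = e_{\alpha^\vee} v_i^*$ for every positive coroot $\alpha^\vee$, which annihilates most terms on the right-hand side of \eqref{eq: poisson bracket of minors}. The surviving contributions split into Cartan terms (proportional to $\Delta_{v_i^*,v_i}(u)$ itself, hence already in $I_\mu^\lambda$ for sufficiently high degree) and lowering-operator terms of the form $\Delta_{v_i^*, f_{\alpha^\vee} v_i}(u)$ and $\Delta_{f_{\alpha^\vee}v_i^*, v_i}(u)$, whose coefficients in $t^{-s}$ must be compared against $m_i$ after factoring $(u-v)^{-1}$ as a geometric series. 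By the Leibniz rule it suffices to run this check when $\Delta_{\beta,\gamma}^{(r)}$ ranges over the Poisson generating set of $\O(\Gr_\mu)$ furnished by Theorem~\ref{subalgebra}. The most delicate case is the shifted generator $f_i^{(s)}$ with $s>\mu_i$, where one has to handle the quotient $\Delta_{v_i^*, f_i v_i}(u)/\Delta_{v_i^*, v_i}(u)$ and track degrees carefully through the denominator.

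Combining the two inclusions gives $J_\mu^\lambda = I_\mu^\lambda$, and together with $V(I_\mu^\lambda) = \overline{\Y^\lambda}\cap \Gr_\mu$ this produces the desired scheme-theoretic identity. Specialized to $\lambda \in \mathcal{X}$ via Theorem~\ref{theorem: main theorem}, we conclude that $V(J_\mu^\lambda) = \Gr^\lambda_\mu$ with the reduced structure, verifying the conjecture in these cases.
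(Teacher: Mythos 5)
Your overall architecture matches the paper's: establish that $J_\mu^\lambda$ equals the defining ideal of $\Y^\lambda_\mu = \overline{\Y^\lambda}\cap\Gr_\mu$, then invoke the reducedness $\overline{\Y^\lambda}=\overline{\Gr^\lambda}$ for $\lambda\in\mathcal{X}$. However there is a genuine gap in the core step. You introduce the \emph{ordinary} ideal of $\O(\Gr_\mu)$ generated by the leading minors $\Delta_{v_i^\ast,v_i}^{(s)}$ for $s>m_i$, and assert both that it cuts out $\Y^\lambda_\mu$ scheme-theoretically and that it is already closed under Poisson brackets. These assertions are equivalent (since $\Y^\lambda_\mu$ is a Poisson subscheme), and both are false. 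The $\overline{\Y^\lambda}$ condition bounds the pole of \emph{every} matrix coefficient $\Delta_{\beta,\gamma}$ as $\beta,\gamma$ range over weight bases of $V(\varpi_i^\vee)^\ast, V(\varpi_i^\vee)$, not only the leading one; the precise condition on $\Gr_\mu$ is $\Delta_{\beta,\gamma}^{(s)}=0$ for $s > m_i+\langle\mu^\ast,\varpi_i^\vee-\operatorname{wt}(\gamma)\rangle$. These non-leading coefficients are not ordinary-ideal multiples of the leading minors. Already for $G=SL_2$, $\mu=0$, $\lambda=2\varpi_1$ (so $m_1=1$): eliminating $g_{22}^{(s)}$ via $\det g=1$ identifies $\O(\Gm)=\O(\Gr_0)$ with a polynomial ring on the variables $g_{11}^{(s)},g_{12}^{(s)},g_{21}^{(s)}$; the ordinary ideal generated by $\{g_{11}^{(s)}:s\geq 2\}$ plainly fails to contain $g_{12}^{(2)}$, yet $g_{12}^{(2)}$ lies in the defining ideal of $\Y^{2\varpi_1}_0$.

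Producing the non-leading $\Delta_{\beta,\gamma}^{(s)}$ from the leading minors genuinely requires the Poisson bracket --- this is the content of Lemma~\ref{lemma: bracket with f_i} and the Proposition of Section~\ref{sec: Poisson brackets}, where bracketing with $f_j^{(k)}$ converts $\Delta_{v_i^\ast,\gamma}$ into $\Delta_{v_i^\ast,f_j\gamma}$ up to controlled corrections. The paper's key move, which your plan omits, is to leave $\O(\Gr_\mu)$ for the full algebra $\O(\Gm)$, where all the $\Delta_{\beta,\gamma}^{(s)}$ are genuine coordinate functions: there one proves that the ordinary ideal of $\O(\Gm)$ generated by the Poisson ideal $J_\mu^\lambda$ coincides with the ordinary ideal of all the $\Delta_{\beta,\gamma}^{(s)}$, and only then descends to $\O(\Gr_\mu)$ via the locally-trivial fibration $p:\Gm\to\Gr_\mu$. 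Staying inside $\O(\Gr_\mu)$ hits the further obstacle that the functions $\Delta_{v_i^\ast,f_j\gamma}^{(s)}$ need not even lie in the subalgebra $\O(\Gr_\mu)\subset\O(\Gm)$. Finally, a small slip: $v_i^\ast$ is the \emph{lowest}-weight vector of $V(\varpi_i^\vee)^\ast$, so it is killed by $f_{\alpha^\vee}$, not $e_{\alpha^\vee}$; the surviving non-Cartan terms in the bracket with a leading minor are $\Delta_{v_i^\ast,f_{\alpha^\vee}v_i}$ and $\Delta_{e_{\alpha^\vee}v_i^\ast,v_i}$.
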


In the following section we will prove:
\begin{Theorem} \label{th:reducedness}
$J_\mu^\lambda $ is the ideal of $ \Y^\lambda_\mu $.
\end{Theorem}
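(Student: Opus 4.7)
The plan is to prove the two containments $J_\mu^\lambda \subseteq I(\Y_\mu^\lambda)$ and $I(\Y_\mu^\lambda) \subseteq J_\mu^\lambda$, where $I(\Y_\mu^\lambda)$ denotes the defining ideal of $\Y_\mu^\lambda$ inside $\O(\Gr_\mu)$. The forward containment is the direct one: since $\Y_\mu^\lambda$ is a Poisson subscheme of $\Gr_\mu$, the ideal $I(\Y_\mu^\lambda)$ is automatically a Poisson ideal, so it suffices to show each Poisson generator $\Delta_{v_i^\ast, v_i}^{(s)}$ with $s > m_i$ vanishes on $\Y_\mu^\lambda$. Writing a point of $\Gr_\mu$ as $g \cdot t^{w_0 \mu}$ with $g \in G_1[[t^{-1}]]$, and using that $v_i$ is a weight vector of weight $\varpi_i^\vee$, one computes
$$ \Delta_{v_i^\ast, v_i}(g \cdot t^{w_0\mu}) \;=\; t^{-\langle \mu, \varpi_{i^\ast}^\vee\rangle} \sum_{s \geq 0} \Delta_{v_i^\ast, v_i}^{(s)}(g) \, t^{-s}. $$
Applying the defining condition of $\overline{\Y^\lambda}$ with $\nu^\vee = \varpi_i^\vee$, and with highest weight $v_i$ paired with lowest weight $v_i^\ast$, the $t$-valuation of the left hand side must be at least $-\langle \lambda, \varpi_{i^\ast}^\vee \rangle$, which forces $\Delta_{v_i^\ast, v_i}^{(s)}(g) = 0$ for all $s > \langle \lambda - \mu, \varpi_{i^\ast}^\vee \rangle = m_i$.

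For the reverse containment, we must exhibit all defining equations of $\Y_\mu^\lambda$ inside $J_\mu^\lambda$. By the definition of $\overline{\Y^\lambda}$, these arise as the vanishings $\Delta_{\beta, v}^{(s)} = 0$ for $\beta \in V(\nu^\vee)^\ast$, $v \in V(\nu^\vee)$, $\nu^\vee$ dominant, and $s$ above a suitable threshold. First, we reduce to $\nu^\vee = \varpi_i^\vee$ fundamental: any $V(\nu^\vee)$ with $\nu^\vee = \sum n_i \varpi_i^\vee$ embeds as a direct summand of $\bigotimes_i V(\varpi_i^\vee)^{\otimes n_i}$, and matrix coefficients decompose multiplicatively across the tensor factors. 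Next, for $V(\varpi_i^\vee)$ we write $\beta = X \cdot v_i^\ast$ and $v = Y \cdot v_i$ for some $X, Y \in U(\g)$, using that the extremal weight vectors cyclically generate these representations under the $\g$-action. We then use the Poisson formula \eqref{eq: poisson bracket of minors} iteratively: bracketing $\Delta_{v_i^\ast, v_i}^{(s)}$ (for $s > m_i$) against the Poisson generators $\Delta_{e_j v_j^\ast, v_j}^{(s')}$, $\Delta_{v_j^\ast, v_j}^{(s')}$, and $f_j^{(s')}$ furnished by Theorem \ref{subalgebra}, extracting appropriate coefficients in the auxiliary variable, produces shifts of one of $v_i^\ast$ or $v_i$ by a Chevalley generator at controlled total $s$-degree. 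An induction on the word length of $X, Y$ in the Chevalley generators, paired with the tensor-product reduction above, expresses every required $\Delta_{\beta, v}^{(s)}$ inside $J_\mu^\lambda$ modulo lower-order terms already known inductively to lie in $J_\mu^\lambda$.

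The principal obstacle is the bookkeeping in this inductive step. The Poisson bracket formula \eqref{eq: poisson bracket of minors} produces a sum over the full dual basis $\{J_a\}, \{J^a\}$ of $\g$ on the right hand side, so isolating a specific Chevalley-generator shift of $v_i^\ast$ or $v_i$ requires projecting onto suitable weight components while absorbing the remaining summands as inductive error terms of strictly smaller weight or Poisson depth. Matching the $u^{-s}v^{-s'}$ expansions so that the target degree $s$ in the desired $\Delta_{\beta,v}^{(s)}$ is correctly produced, and ensuring that the $(u-v)^{-1}$ denominator in \eqref{eq: poisson bracket of minors} is expanded consistently with the embedding $\Gr_\mu \hookrightarrow \overline{\Y^\lambda}$, forms the main technical difficulty. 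Representation-theoretic analysis of the weight poset of $V(\varpi_i^\vee)$ will drive the induction, with the Frobenius-split reducedness of scheme-theoretic intersections of Schubert varieties (Proposition \ref{prop:intersecting-schubert-varieties}) potentially used to simplify the combinatorial tracking of error terms.
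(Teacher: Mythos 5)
Your overall structure is right: prove both containments $J_\mu^\lambda \subseteq I(\Y_\mu^\lambda)$ and $I(\Y_\mu^\lambda) \subseteq J_\mu^\lambda$, handle the first via the definition of $\overline{\Y^\lambda}$ and the Poisson-ness of $I(\Y_\mu^\lambda)$, and handle the second by reducing to fundamental weights and using iterated Poisson brackets against the generators from Theorem \ref{subalgebra}. Your valuation computation for the forward direction is also correct. The paper organizes things slightly differently --- it pulls everything back to $\Gm$ along the fibration $p : \Gm \to \Gr_\mu$ and works with the ordinary ideal $I_\mu^\lambda = J_\mu^\lambda \cdot \O(\Gm)$, which lets it prove a clean intermediate Proposition giving explicit generators for $I_\mu^\lambda$ --- but your direct approach in $\Gr_\mu$ would be morally equivalent.

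However, the proposal stops exactly where the real work begins. You acknowledge ``the principal obstacle'' and ``the main technical difficulty'' (isolating a specific Chevalley-generator shift from the full $\sum_a J_a \otimes J^a$ sum while controlling error terms and $u^{-s}$-degrees) but do not resolve it, and some of what you say about resolving it is wrong. The key missing ingredient is the explicit computation of $\{f_j^{(k+1)}, \Delta_{v_i^\ast,\gamma}(v)\}$ (Lemma \ref{lemma: bracket with f_i} in the paper): it shows this bracket equals $v^k d_j^{1/2} \Delta_{v_i^\ast, f_j\gamma}(v)$ plus a correction $p(v)\Delta_{v_i^\ast,\gamma}(v)$ plus error terms $q_{\alpha^\vee}(v)\Delta_{v_i^\ast, e_{\alpha^\vee}\gamma}(v)$. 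Crucially, the error terms involve $e_{\alpha^\vee}\gamma$, which has \emph{strictly higher} weight than $\gamma$, so the induction on the second argument must run \emph{downward} in $\mathrm{wt}(\gamma)$ starting from the highest weight $v_i$. Your description of the error terms as having ``strictly smaller weight or Poisson depth,'' and your proposed induction on the word length of $X, Y$ in the Chevalley generators, both point the wrong way: without the downward-weight organization the induction is not obviously well-founded, and word length is too coarse an invariant to control which $\Delta$'s appear. You also need the companion fact that the resulting ideal $\widetilde I_\mu^\lambda$ is closed under Poisson brackets with $\O(\Gr_\mu)$ (Lemma \ref{lemma: Poisson closed}), which gives the converse containment $J_\mu^\lambda \subseteq \widetilde I_\mu^\lambda$; this step does not appear in your outline. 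Finally, the suggested use of Frobenius splitting (Proposition \ref{prop:intersecting-schubert-varieties}) is a red herring: that result plays no role in this argument.
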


\begin{Corollary}
Conjecture \ref{conj: KWWY conj} is true when $G = SL_n$ and $\lambda \in \mathcal{X}$ (as defined in Section \ref{section: Generating more cases}).
\end{Corollary}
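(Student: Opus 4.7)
The plan is to deduce the corollary as a direct synthesis of two prior results in the paper. Theorem \ref{theorem: main theorem} asserts the scheme-theoretic equality $\overline{\Y^\lambda} = \overline{\Gr^\lambda}$ for all $\lambda \in \mathcal{X}$, while Theorem \ref{th:reducedness} identifies the Poisson ideal $J_\mu^\lambda$ as the defining ideal of $\Y_\mu^\lambda$. Conjecture \ref{conj: KWWY conj} then follows once one knows that $\Y_\mu^\lambda$ and $\Gr^\lambda_\mu$ agree as subschemes of $\Gr_\mu$.

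The key step is to note that by definition, $\Y_\mu^\lambda = \overline{\Y^\lambda} \cap \Gr_\mu$ and $\Gr^\lambda_\mu = \overline{\Gr^\lambda} \cap \Gr_\mu$, both as scheme-theoretic intersections. For $\lambda \in \mathcal{X}$, Theorem \ref{theorem: main theorem} furnishes the scheme-theoretic equality of the ambient closed subschemes $\overline{\Y^\lambda}$ and $\overline{\Gr^\lambda}$ of the affine Grassmannian. Intersecting each of these with $\Gr_\mu$ yields the scheme-theoretic identification $\Y_\mu^\lambda = \Gr^\lambda_\mu$. Invoking Theorem \ref{th:reducedness}, the ideal $J_\mu^\lambda$ of $\O(\Gr_\mu)$ is precisely the defining ideal of $\Y_\mu^\lambda = \Gr^\lambda_\mu$, which is the content of Conjecture \ref{conj: KWWY conj} for the coweights in question.

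The real difficulty has already been absorbed into the two cited theorems, so the corollary itself amounts to bookkeeping. The nontrivial input from Theorem \ref{theorem: main theorem} is the explicit complete-intersection description of $\overline{\Y^{kn\varpi_1}}$ combined with the summand/meet propagation arguments of Section \ref{section: Generating more cases}. The nontrivial input from Theorem \ref{th:reducedness} is the Poisson-ideal analysis carried out in Section \ref{sec: Poisson brackets}, where one must match the generators $\Delta_{v_i^\ast,v_i}^{(s)}$ of $J_\mu^\lambda$ against the Pl\"ucker-type equations cutting out $\overline{\Y^\lambda}$. Given both, the corollary requires no further argument.
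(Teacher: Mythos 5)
Your proof is correct and takes the same route the paper intends: for $\lambda\in\mathcal{X}$, Theorem \ref{theorem: main theorem} gives $\overline{\Y^\lambda}=\overline{\Gr^\lambda}$ as schemes, so $\Y^\lambda_\mu=\overline{\Y^\lambda}\cap\Gr_\mu=\overline{\Gr^\lambda}\cap\Gr_\mu=\Gr^\lambda_\mu$, and Theorem \ref{th:reducedness} then identifies $J_\mu^\lambda$ as the ideal of $\Gr^\lambda_\mu$, which is exactly Conjecture \ref{conj: KWWY conj}. The paper leaves this two-line synthesis implicit; your spelled-out version matches it.
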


Consider the (ordinary) ideal $I_\mu^\lambda \subset \O(\Gm)$ generated by
$$J_\mu^\lambda \subset \O(\Gr_\mu) \subset \O(\Gm)$$
We will study $ J^\lambda_\mu $ using $ I^\lambda_\mu $. This is possible because of the following simple result concerning the map $ p : \Gm \rightarrow \Gr_\mu $ defined by $ g \mapsto g t^{w_0\mu} $.

\begin{Proposition}
$ J_\mu^\lambda $ is the ideal of $ \Gr^\lambda_\mu $ as a subvariety of $ \Gr_\mu $ if and only if $ I^\lambda_\mu $ is the ideal of $ p^{-1}(\Gr^\lambda_\mu)  $ as a subvariety of $ \Gm$.
\end{Proposition}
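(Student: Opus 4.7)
The plan is to use that the orbit map $p \colon \Gm \to \Gr_\mu$ is a principal bundle for the stabilizer $H = \operatorname{Stab}_{\Gm}(t^{w_0\mu})$, and hence is faithfully flat with geometrically reduced fibers. This will allow us to transfer reducedness questions back and forth between $\O(\Gr_\mu)$ and $\O(\Gm)$.

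First, I would reformulate both sides of the ``if and only if'' in terms of reducedness of a quotient ring. The vanishing locus of $J_\mu^\lambda$ in $\Gr_\mu$ is exactly $\Gr^\lambda_\mu$ by \cite[Proposition 2.21]{KWWY}, so $J_\mu^\lambda$ equals the ideal of the (reduced) subvariety $\Gr^\lambda_\mu$ precisely when $J_\mu^\lambda$ is radical, i.e., $\O(\Gr_\mu)/J_\mu^\lambda$ is reduced. On the other hand, $I_\mu^\lambda = J_\mu^\lambda \cdot \O(\Gm)$ has vanishing locus $p^{-1}(\Gr^\lambda_\mu)$, so analogously $I_\mu^\lambda$ equals the ideal of $p^{-1}(\Gr^\lambda_\mu)$ precisely when $\O(\Gm)/I_\mu^\lambda$ is reduced.

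Once both sides are recast in this form, the equivalence becomes purely formal. Because $\O(\Gr_\mu) \hookrightarrow \O(\Gm)$ is faithfully flat with geometrically reduced fibers, the base change $\O(\Gr_\mu)/J_\mu^\lambda \mapsto \O(\Gm)/J_\mu^\lambda \cdot \O(\Gm) = \O(\Gm)/I_\mu^\lambda$ preserves reducedness in both directions. The ``reduced implies reduced'' direction uses the geometric reducedness of the fibers, while the converse follows from the injection $\O(\Gr_\mu)/J_\mu^\lambda \hookrightarrow \O(\Gm)/I_\mu^\lambda$ induced by faithful flatness, together with the fact that a subring of a reduced ring is reduced.

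The main obstacle is verifying that $p$ really is faithfully flat with geometrically reduced fibers in the pro-algebraic setting of $\Gm = G_1[[t^{-1}]]$. Although $\Gm$ is infinite-dimensional, $\Gr_\mu$ is of finite type and the stabilizer $H$ contains a congruence subgroup of $\Gm$, so $p$ factors through a finite-type quotient where standard results for orbit maps of algebraic groups apply; the remaining fiber of $\Gm$ over this quotient is a product of affine spaces, which causes no difficulty. This verification is the step I would work out carefully.
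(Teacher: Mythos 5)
Your proof is correct and uses the same essential input as the paper --- faithfully flat descent along $p\colon \Gm\to\Gr_\mu$ --- but packages it differently. The paper argues directly at the level of closed subschemes: the forward direction is stated as a consequence of "the definition of pullback," and the converse uses that pullback of closed subschemes along a locally trivial fibration is injective. You instead translate both sides into the statement that the relevant quotient ring is reduced, and then invoke that faithfully flat base change with geometrically reduced fibers transfers reducedness in both directions (downward via the injection $\O(\Gr_\mu)/J^\lambda_\mu\hookrightarrow\O(\Gm)/I^\lambda_\mu$, upward via reducedness of flat families with geometrically reduced fibers over a reduced base). This reformulation requires one extra input --- that the vanishing locus of $J^\lambda_\mu$ is set-theoretically $\Gr^\lambda_\mu$, cited from \cite[Proposition 2.21]{KWWY} --- which the paper's argument does not explicitly invoke. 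On the other hand, your route is slightly more careful on the forward direction, where the paper's phrase "from the definition of pullback" quietly uses that $p^{-1}(\Gr^\lambda_\mu)$ stays reduced under pullback, a fact that does require the good fibers of $p$ and not just abstract pullback. Your concern about the pro-algebraic nature of $\Gm$ is legitimate but is at the same level of informality as the paper's bare assertion that $p$ is a locally trivial fibration; the reduction to a finite-type quotient modulo a congruence subgroup is indeed the standard way to handle it.
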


\begin{proof}
Let $ K^\lambda_\mu $ denote the ideal of $ \Gr^\lambda_\mu$.  From the definition of pullback, it follows that the ideal of $ p^{-1}(\Gr^\lambda_\mu) $ is the ideal generated by $ K^\lambda_\mu $ in $ \O(\Gm) $.  Thus, we see that if $ J^\lambda_\mu = K^\lambda_\mu $, then $ I^\lambda_\mu $ is the ideal of $ p^{-1}(\Gr^\lambda_\mu)  $.

Conversely, suppose that $ I^\lambda_\mu $ is the ideal of $ p^{-1}(\Gr^\lambda_\mu) $.  Thus, both the scheme $ \Gr^\lambda_\mu $ and the subscheme of $ \Gr_\mu $ defined by $ J^\lambda_\mu $ pullback under $ p$ to the same subscheme of $ \Gm $.  But, since $ \Gm \rightarrow \Gr_\mu $ is a locally trivial fibration, this implies that these two subschemes of $ \Gr_\mu $ are equal.  So the result follows.
\end{proof}

Thus in order to prove Theorem \ref{th:reducedness}, it suffices to prove the following result.
\begin{Theorem} \label{th:idealofY}
$I^\lambda_\mu $ is the ideal of $ m^{-1}(\overline{\Y^\lambda})$.
\end{Theorem}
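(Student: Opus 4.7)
The plan is to establish the two inclusions between $I^\lambda_\mu$ and the defining ideal $\mathcal{I} \subset \O(\Gm)$ of $m^{-1}(\overline{\Y^\lambda})$.

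For the inclusion $I^\lambda_\mu \subset \mathcal{I}$, the key observation is that
\[
\Delta_{v_i^\ast, v_i}(g t^{w_0\mu}) = t^{-\langle\mu^\ast, \varpi_i^\vee\rangle}\Delta_{v_i^\ast, v_i}(g),
\]
since $v_i$ is a highest weight vector. The condition defining $\overline{\Y^\lambda}$, restricted to the representation $V(\varpi_i^\vee)$ and this highest weight matrix coefficient, forces the above Laurent series to have $t^{-1}$-degree at most $\langle\lambda^\ast, \varpi_i^\vee\rangle$; after stripping the scalar shift this translates to $\Delta^{(s)}_{v_i^\ast, v_i}(g) = 0$ for $s > m_i$. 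Hence the basic generators of $J^\lambda_\mu$ lie in the defining ideal of $\Y^\lambda_\mu$ in $\O(\Gr_\mu)$. Since $\Y^\lambda_\mu$ is a Poisson subscheme of $\Gr_\mu$, this defining ideal is Poisson-closed and therefore contains all of $J^\lambda_\mu$; extending to $\O(\Gm)$ via pullback along $p$ yields $I^\lambda_\mu \subset \mathcal{I}$.

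For the reverse inclusion $\mathcal{I} \subset I^\lambda_\mu$, I would first describe $\mathcal{I}$ via generators. Repeating the analysis above for arbitrary weight vectors $v_\eta \in V(\varpi_i^\vee)$ and covectors $\beta \in V(\varpi_i^\vee)^\ast$, the defining condition of $\overline{\Y^\lambda}\cap U$ gives $\Delta_{\beta, v_\eta}(g t^{w_0\mu}) = t^{\langle w_0\mu, \eta\rangle}\Delta_{\beta, v_\eta}(g)$, which must have $t^{-1}$-degree at most $\langle\lambda^\ast, \varpi_i^\vee\rangle$; equivalently, $\Delta^{(s)}_{\beta, v_\eta}(g) = 0$ for $s$ above the weight-dependent cutoff $\langle\lambda^\ast, \varpi_i^\vee\rangle + \langle w_0\mu, \eta\rangle$. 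These functions generate $\mathcal{I}$. To realize each such $\Delta^{(s)}_{\beta, v_\eta}$ in the relevant index range as an element of $I^\lambda_\mu$, I would iterate the Poisson bracket formula \eqref{eq: poisson bracket of minors}: bracketing $\Delta^{(s)}_{v_i^\ast, v_i}$ against the other generators of $\O(\Gr_\mu)$ listed in Theorem \ref{subalgebra} produces, via the $J_a \otimes J^a$ sum, matrix coefficients in which $v_i$ or $v_i^\ast$ is replaced by the action of a Chevalley generator, so iteration descends $v_i$ toward any weight vector via applications of $f_j$'s and ascends $v_i^\ast$ toward any covector via applications of $e_j$'s.

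The main obstacle I anticipate is the combinatorial bookkeeping in this last step: verifying that the $t$-shift produced by the factor $(u-v)^{-1}$ in \eqref{eq: poisson bracket of minors} is such that the resulting matrix coefficients $\Delta^{(s')}_{\beta, v_\eta}$ appear with indices $s'$ matching the weight-dependent cutoff $\langle\lambda^\ast, \varpi_i^\vee\rangle + \langle w_0\mu, \eta\rangle$, so that they indeed lie in $I^\lambda_\mu$ and collectively generate all of $\mathcal{I}$. This matching, which aligns the algebraic degree data carried by the Poisson brackets with the geometric pole bounds coming from the modular description of $\overline{\Y^\lambda}$, is the essential technical content to be carried out in Section \ref{sec: Poisson brackets}.
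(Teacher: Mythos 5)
Your proposal is correct and follows essentially the same route as the paper. The key step you identify — describing the ideal $\mathcal{I}$ of $m^{-1}(\overline{\Y^\lambda})$ by the weight-shifted matrix-coefficient cutoff $\Delta^{(s)}_{\beta,\gamma}(g)=0$ for $s > \langle\lambda^\ast,\varpi_i^\vee\rangle + \langle w_0\mu,\operatorname{wt}(\gamma)\rangle = m_i + \langle\mu^\ast,\varpi_i^\vee-\operatorname{wt}(\gamma)\rangle$, and then matching these against $I^\lambda_\mu$ via iterated Poisson brackets — is precisely what the paper does in the Proposition preceding Theorem \ref{th:idealofY} together with Lemma \ref{lemma: bracket with f_i} and Lemma \ref{lemma: Poisson closed}; the paper then derives the theorem by a direct functor-of-points comparison rather than stating the two inclusions separately, but the content is identical. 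The bookkeeping you flag as the likely obstacle (tracking how the $(u-v)^{-1}$ factor shifts the $u$-degree under brackets with $f_j^{(\mu_j+1)}$ and $\Delta_{e_j v_j^\ast,v_j}^{(1)}$, via the residue identity \eqref{eq: residue}) is indeed exactly the technical work carried out there, and your downward-induction-on-weight plan for producing $\Delta^{(s)}_{v_i^\ast,\gamma}$ from $\Delta^{(s)}_{v_i^\ast,v_i}$ is the same scheme the paper uses.
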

Here we write $ m : \Gm \rightarrow \Gr $ for the morphism given by $ p:\Gm \rightarrow \Gr_\mu $ followed by the inclusion of $ \Gr_\mu $ into $ \Gr $.

\section{Poisson brackets}
\label{sec: Poisson brackets}

We begin with a general fact about ideals generated by Poisson ideals:
\begin{Lemma}
Suppose $A \subset B$ is an inclusion of Poisson algebras, and that $J\subset A$ is a Poisson ideal.  Consider the ordinary ideal $I = JB \subset B$.  Then $\{ A, I\} \subset I$.
\end{Lemma}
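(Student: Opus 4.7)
The plan is to use the Leibniz rule for the Poisson bracket together with the definition of a Poisson ideal. The statement is essentially the observation that the ideal generated in $B$ by a Poisson ideal of $A$ is closed under bracketing with $A$, and the proof is a one-step calculation on generators.

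First I would write a general element $x \in I$ as a finite sum $x = \sum_k j_k b_k$ with $j_k \in J$ and $b_k \in B$, which is possible because $I = JB$ is by definition the $B$-ideal generated by $J$. For any $a \in A$, bilinearity of the bracket reduces the problem to showing $\{a, jb\} \in I$ for a single $j \in J$, $b \in B$.

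Next I would apply the Leibniz rule in $B$ to obtain
$$\{a, jb\} = \{a,j\}\, b + j\, \{a,b\}.$$
Since $a,j \in A$ and $J$ is a Poisson ideal of $A$, the bracket $\{a,j\}$ lies in $J$ (this is the only place the Poisson ideal hypothesis enters), so $\{a,j\}\,b \in JB = I$. The second term $j\{a,b\}$ lies in $JB = I$ simply because $j \in J \subset I$ and $I$ is an ideal of $B$. Summing over $k$ yields $\{a,x\} \in I$, completing the argument.

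There is essentially no obstacle here; the only subtle point is ensuring that $\{a,j\}$, computed in $B$, actually lands in $J \subset A$. This is fine because $A \subset B$ is an inclusion of Poisson algebras, so the bracket on $B$ restricts to the bracket on $A$ on pairs of elements of $A$, and then $\{a,j\} \in J$ by the Poisson ideal hypothesis on $J \subset A$.
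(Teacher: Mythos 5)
Your proof is correct, and it is the standard one-step Leibniz-rule argument that the paper treats as obvious and leaves unwritten (the lemma is stated without proof, immediately followed by the corollary it is used for). You correctly identify the one point that needs care — that $\{a,j\}$ computed in $B$ lands in $J$ because the bracket restricts to $A$ — so nothing is missing.
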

\begin{Corollary} \label{Poisson brackets}
$I_\mu^\lambda \subset \O(\Gm)$ is closed under taking Poisson brackets with $\O(\Gr_\mu)$.
\end{Corollary}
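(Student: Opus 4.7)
The plan is to deduce the Corollary as an immediate application of the preceding Lemma, together with the orbit-map description of $\Gr_\mu$. I would take $A = \O(\Gr_\mu)$ and $B = \O(\Gm)$, with the embedding $A \hookrightarrow B$ realized as the pullback along the map $p : \Gm \to \Gr_\mu$, $g \mapsto g t^{w_0\mu}$, already used at the end of Section \ref{section: Ideal generators for Lusztig slices}. The map $p$ is surjective, so $p^\ast$ is injective. Moreover, $p$ is a Poisson morphism because $\Gr_\mu$ is a Poisson homogeneous space for $\Gm$ (as recalled from \cite[Section 3]{KWWY}), so $p^\ast$ is an inclusion of Poisson algebras. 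Taking $J = J_\mu^\lambda \subset \O(\Gr_\mu)$, which is a Poisson ideal by its very definition, the ideal $J B \subset B$ appearing in the Lemma is exactly $I_\mu^\lambda$. The conclusion of the Lemma is then $\{\O(\Gr_\mu), I_\mu^\lambda\} \subset I_\mu^\lambda$, which is the Corollary.

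The substance of the argument therefore lies in the Lemma itself, which I would prove by a one-line Leibniz calculation. An arbitrary element of $I = JB$ is a finite sum $\sum_k j_k b_k$ with $j_k \in J$ and $b_k \in B$, and for any $a \in A$,
$$ \{ a, j_k b_k\} = \{a, j_k\} b_k + j_k \{a, b_k\}. $$
The first term lies in $I$ because $\{a, j_k\} \in J$ (this is where the hypothesis that $J$ is a Poisson ideal of $A$ is used, and it is essential that $a$ lies in $A$ rather than in $B$), while the second lies in $I$ because $j_k \in J \subset I$. Summing over $k$ yields $\{a, \sum_k j_k b_k\} \in I$.

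There is no real obstacle here: the only point worth double-checking is that $p^\ast : \O(\Gr_\mu) \hookrightarrow \O(\Gm)$ genuinely respects Poisson brackets, and this reduces to the already-invoked fact that $p$ is a Poisson map. I would therefore not expect to spend more than a few lines on the actual proof of the Corollary in the final write-up.
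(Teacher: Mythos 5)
Your proof is correct and follows the paper's approach exactly: apply the preceding Lemma with $A = \O(\Gr_\mu)$, $B = \O(\Gm)$, $J = J_\mu^\lambda$, noting $J_\mu^\lambda \cdot \O(\Gm) = I_\mu^\lambda$ by definition, and supply the (left implicit in the paper) Leibniz-rule proof of the Lemma. One small caution on your justification that $p^\ast$ respects brackets: being a Poisson homogeneous space does not by itself imply that the orbit map $g \mapsto g t^{w_0\mu}$ is Poisson (that requires the stabilizer to be coisotropic), but this is immaterial here since the fact that $\O(\Gr_\mu) \hookrightarrow \O(\Gm)$ is a Poisson subalgebra is exactly what \cite[Theorem 3.12]{KWWY} asserts and is simply being imported.
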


Recall a well-known ``delta-function'' property of the  formal series
$$ \frac{u^{-1}}{1-u^{-1} v} = u^{-1} + u^{-2} v + u^{-3} v^2 + \ldots $$
Namely, for any $T(u) = \sum_{n\in \ZZ} T_n u^{-n} \in Y((u^{-1}))$ we have
\begin{equation} \label{eq: residue}
\res_u \left(\frac{u^{-1}}{1-u^{-1} v} T(u)\right)  = T(v)_+ \in Y[v]
\end{equation}
where we denote $T(v)_+ := \sum_{n\geq 0} T_n v^n$, and where $\res_u$ denotes the formal residue at $u=0$ (i.e. the coefficient of $u^{-1}$).

\begin{Lemma} \label{lemma: bracket with f_i}
Let $j \in I$ and $k \geq 1$.  There exist polynomials
$$p, q_{\alpha^\vee} \in \O\left(\Gm\right)[v], \qquad \forall {\alpha^\vee} \in \Phi_+^\vee,$$
of degree $k-1$ in $v$, such that for all $i\in I$ and weight vectors $\gamma \in V(\varpi_i)$ we have
$$ \{f_j^{(k+1)}, \Delta_{v_i^{\ast},\gamma}(v)\} = v^k  d_j^{1/2} \Delta_{v_i^{\ast}, f_j \gamma}(v) + ( \alpha_j^\vee, \operatorname{wt}(\gamma) ) p(v) \Delta_{v_i^{\ast}, \gamma}(v) + \sum_{\alpha^\vee \in \Phi_+^\vee} q_{\alpha^\vee}(v) \Delta_{v_i^{\ast}, e_{\alpha^\vee} \gamma}(v)  $$
\end{Lemma}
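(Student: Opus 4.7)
Write $f_j(u) = d_j^{-1/2} N(u)/D(u)$ with $N(u) := \Delta_{v_j^{\ast}, f_j v_j}(u)$ and $D(u) := \Delta_{v_j^{\ast}, v_j}(u)$. The plan is to apply the Poisson bracket formula \eqref{eq: poisson bracket of minors} to compute both $\{N(u), \Delta_{v_i^{\ast},\gamma}(v)\}$ and $\{D(u), \Delta_{v_i^{\ast},\gamma}(v)\}$, combine them via the Leibniz rule (noting that $D(u)=1+O(u^{-1})$ is formally invertible in $\O(\Gm)[[u^{-1}]]$), and then extract the coefficient of $u^{-(k+1)}$ via the expansion $\frac{1}{u-v} = \sum_{n \geq 0} u^{-n-1} v^n$.

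\textbf{Cartan contribution.} Because $v_j$ is a highest weight vector and $v_j^{\ast}$ a lowest weight vector of $V(\varpi_j^\vee)^*$, many terms in \eqref{eq: poisson bracket of minors} drop out. For weight vectors, the Cartan part of $(u-v)\{\Delta_{\beta_1,\gamma_1}(u), \Delta_{\beta_2,\gamma_2}(v)\}$ simplifies to $\bigl((\mathrm{wt}(\gamma_1), \mathrm{wt}(\gamma_2)) - (\mathrm{wt}(\beta_1), \mathrm{wt}(\beta_2))\bigr) \Delta_{\beta_1,\gamma_1}(u) \Delta_{\beta_2,\gamma_2}(v)$. Applying this with $\mathrm{wt}(f_j v_j) = \varpi_j^\vee - \alpha_j^\vee$ and $\mathrm{wt}(v_j) = \varpi_j^\vee$, and using Leibniz, the weight-$\beta$ contributions (identical for $N$ and $D$) cancel, and the net Cartan contribution to $\{f_j(u), \Delta_{v_i^{\ast},\gamma}(v)\}$ equals $-\frac{(\alpha_j^\vee, \mathrm{wt}(\gamma))}{u-v} f_j(u) \Delta_{v_i^{\ast}, \gamma}(v)$. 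Extracting its coefficient of $u^{-(k+1)}$ yields precisely $(\alpha_j^\vee, \mathrm{wt}(\gamma)) p(v) \Delta_{v_i^{\ast}, \gamma}(v)$ with $p(v) = -\sum_{n=0}^{k-1} f_j^{(k-n)} v^n$, a polynomial of degree $k-1$ in $v$ with coefficients in $\O(\Gm)$.

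\textbf{Root-vector terms and main obstacle.} The surviving root contributions involve $\Delta_{v_i^{\ast}, e_{\alpha^\vee} \gamma}(v)$ and, a priori, $\Delta_{v_i^{\ast}, f_{\alpha^\vee} \gamma}(v)$ terms arising from $e_{\alpha^\vee} f_j v_j = [e_{\alpha^\vee}, f_j] v_j$ inside $\{N(u), \Delta_{v_i^{\ast}, \gamma}(v)\}$ (using $e_{\alpha^\vee} v_j = 0$). This commutator vanishes unless $\alpha^\vee = \alpha_j^\vee$ (in which case $[e_j, f_j] v_j = v_j$, so it produces $D(u)$) or $\alpha^\vee - \alpha_j^\vee \in \Phi_+^\vee$. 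The $\alpha^\vee = \alpha_j^\vee$ contribution, after Leibniz and the identity $d_j^{1/2} f_j(u) D(u) = N(u)$, produces exactly the leading $v^k d_j^{1/2} \Delta_{v_i^{\ast}, f_j \gamma}(v)$ term in the formula upon extraction of the $u^0$ part. The remaining root-vector contributions reorganize into $\sum_{\alpha^\vee} q_{\alpha^\vee}(v) \Delta_{v_i^{\ast}, e_{\alpha^\vee} \gamma}(v)$, after reindexing via the commutator $[e_{\alpha^\vee}, f_j]$ (which is a root vector for $\alpha^\vee - \alpha_j^\vee$) and using lowest weight of $v_j^{\ast}$ to kill the corresponding terms in $\{J_a v_j^{\ast}, \cdot\}$. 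The main obstacle is the careful bookkeeping required to verify (i) that all $\Delta_{v_i^{\ast}, f_{\alpha^\vee} \gamma}(v)$-type terms indeed cancel after combining the Leibniz contributions, and (ii) that each $q_{\alpha^\vee}(v)$ has degree at most $k-1$ in $v$; both follow from the structure of the $1/(u-v)$ and $1/D(u)$ expansions but require tracking several interacting terms in the coefficients of $u^{-s}$ for $s = 0, 1, \ldots, k$.
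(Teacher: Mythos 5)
Your proposal follows the same route as the paper: apply Leibniz to $f_j = d_j^{-1/2}N/D$, use the Poisson bracket formula \eqref{eq: poisson bracket of minors} together with highest/lowest-weight vanishing to arrive at the intermediate identity \eqref{eq: bracket with f}, then extract the coefficient of $u^{-(k+1)}$ via the $\frac{1}{u-v}$ geometric expansion (which the paper phrases as the residue identity \eqref{eq: residue}), and read off $p$, $q_{\alpha^\vee}$ of degree $k-1$. One small clarification on your bookkeeping worry: the $\Delta_{v_i^\ast, f_{\alpha^\vee}\gamma}(v)$-type terms for $\alpha^\vee\neq\alpha_j^\vee$ do not cancel against each other after Leibniz but vanish outright, because $[e_{\alpha^\vee},f_j]$ is proportional to $e_{\alpha^\vee-\alpha_j^\vee}$ (or zero) and hence kills the highest weight vector $v_j$, so the only surviving such term is $\alpha^\vee=\alpha_j^\vee$, which is the leading term you correctly identify.
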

\begin{proof}
Using the definition of $f_j(u)$ and the formula (\ref{eq: poisson bracket of minors}) for the Poisson bracket, as well as the identity $\{a^{-1}, b\} = -a^{-1} \{ a, b\} a^{-1}$ valid in any Poisson algebra, one can show that
\begin{align} 
	(u-v)\{f_j(u), \Delta_{v_i^\ast,\gamma}(v)\} &= d_j^{1/2} \Delta_{v_i^\ast, f_j \gamma}(v) - ( \alpha_j^\vee, \operatorname{wt}(\gamma) ) f_j(u) \Delta_{v_i^\ast, \gamma}(v) \nonumber \\
&\phantom{X}+  \sum_{\alpha^\vee\in \Phi_+^\vee} d_j^{-1/2}d_{\alpha^\vee} \left( \frac{\Delta_{v_j^\ast, f_{\alpha^\vee} f_j v_j}(u)}{\Delta_{v_j^\ast, v_j}(u)}- \frac{\Delta_{v_j^\ast, f_j v_j}(u) \Delta_{v_j^\ast, f_{\alpha^\vee} v_j}(u)}{\Delta_{v_j^\ast, v_j}(u)^2} \right) \Delta_{v_i^\ast, e_{\alpha^\vee} \gamma}(v) \label{eq: bracket with f}
\end{align}
where all series are expanded as Laurent series in $u^{-1}$ and $v^{-1}$.

Now observe that
$$	\{f_j^{(k+1)}, \Delta_{v_i^\ast,\gamma}(v) \} = \ \res_u \Big( u^k \{ f_j(u), \Delta_{v_i^\ast, \gamma}(v) \} \Big) $$
We will rewrite the right-hand side.  First we use equation (\ref{eq: bracket with f}) with both sides multiplied by $u^k \frac{u^{-1}}{1-u^{-1} v}$, and then we apply the identity (\ref{eq: residue}):
\begin{align*}
& \res_u \left( u^k \frac{u^{-1}}{1-u^{-1}v} \bigg[ d_j^{1/2} \Delta_{v_i^\ast, f_j \gamma}(v) - ( \alpha_j^\vee, \operatorname{wt}(\gamma) ) f_j(u) \Delta_{v_i^\ast, \gamma}(v) \right.  \\  & \left.+ \sum_{\alpha^\vee}  d_j^{-1/2}d_{\alpha^\vee}  \Big( \frac{\Delta_{v_j^\ast, f_{\alpha^\vee} f_j v_j}(u)}{\Delta_{v_j^\ast, v_j}(u)}- \frac{\Delta_{v_j^\ast, f_j v_j}(u) \Delta_{v_j^\ast, f_{\alpha^\vee} v_j}(u)}{\Delta_{v_j^\ast, v_j}(u)^2} \Big) \Delta_{v_i^\ast, e_{\alpha^\vee} \gamma}(v) \bigg] \right)  \\
=& \ v^k d_j^{1/2} \Delta_{v_i^\ast, f_j \gamma}(v) - \Big[ v^k ( \alpha_j^\vee, \operatorname{wt}(\gamma) ) f_j(v) \Big]_+ \Delta_{v_i^\ast, \gamma}(v) \\
& + \sum_{\alpha^\vee} \left[v^k d_j^{-1/2}d_{\alpha^\vee}  \Big( \frac{\Delta_{v_j^\ast, f_{\alpha^\vee} f_j v_j}(v)}{\Delta_{v_j^\ast, v_j}(v)}- \frac{\Delta_{v_j^\ast, f_j v_j}(v) \Delta_{v_j^\ast, f_{\alpha^\vee} v_j}(v)}{\Delta_{v_j^\ast, v_j}(v)^2} \Big)\right]_+ \Delta_{v_i^\ast, e_{\alpha^\vee} \gamma}(v)
\end{align*}
The claim now follows, with
\begin{align*}
p(v) & = - \left[v^k f_j(v)\right]_+, \\
q_{\alpha^\vee}(v) & = \left[v^k d_j^{-1/2}d_{\alpha^\vee}  \Big( \frac{\Delta_{v_j^\ast, f_{\alpha^\vee} f_j v_j}(v)}{\Delta_{v_j^\ast, v_j}(v)}- \frac{\Delta_{v_j^\ast, f_j v_j}(v) \Delta_{v_j^\ast, f_{\alpha^\vee} v_j}(v)}{\Delta_{v_j^\ast, v_j}(v)^2} \Big)\right]_+
\end{align*}
\end{proof}

\begin{Proposition}
$I_\mu^\lambda$ is generated as an ordinary ideal by
$$\Delta_{\beta, \gamma}^{(s)}\ \ \text{ for } s > m_i + \langle\mu^*, \varpi_i^\vee - \operatorname{wt} (\gamma) \rangle $$
over all $i\in I$,  where $\beta, \gamma$ range over weight bases for $V(\varpi_i^\vee)^\ast$ and $V(\varpi_i^\vee)$, respectively.
\end{Proposition}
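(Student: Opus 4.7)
Let $I \subset \O(\Gm)$ denote the ordinary ideal generated by the proposed family $\{\Delta_{\beta,\gamma}^{(s)} : s > m_i + \langle\mu^*, \varpi_i^\vee - \operatorname{wt}(\gamma)\rangle\}$.  The plan is to prove both inclusions $I \subset I_\mu^\lambda$ and $I_\mu^\lambda \subset I$: the former by iteratively producing every claimed generator from the Poisson generators of $J_\mu^\lambda$ via Lemma \ref{lemma: bracket with f_i} (and an analog), the latter by verifying that $I$ absorbs Poisson brackets with $\O(\Gr_\mu)$.

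For $I \subset I_\mu^\lambda$, I first fix $\beta = v_i^*$ and induct on $\operatorname{wt}(\gamma) \leq \varpi_i^\vee$.  The base case $\gamma = v_i$ is immediate since $\Delta_{v_i^*, v_i}^{(s)} \in J_\mu^\lambda \subset I_\mu^\lambda$ for $s > m_i$.  For the inductive step, write $\gamma = f_j \gamma'$ for some weight vector $\gamma'$ of strictly higher weight and apply Lemma \ref{lemma: bracket with f_i} with $k = \mu_j$, so that $f_j^{(k+1)} \in \O(\Gr_\mu)$.  Extracting the coefficient of $v^{-r}$ from that lemma gives
\[
d_j^{1/2}\Delta_{v_i^*,\gamma}^{(r+k)} = \{f_j^{(k+1)}, \Delta_{v_i^*,\gamma'}^{(r)}\} - (\alpha_j^\vee, \operatorname{wt}(\gamma'))\sum_{a=0}^{k-1} p_a \Delta_{v_i^*,\gamma'}^{(r+a)} - \sum_{\alpha^\vee}\sum_{a=0}^{k-1} q_{\alpha^\vee,a} \Delta_{v_i^*, e_{\alpha^\vee}\gamma'}^{(r+a)}.
\]
Taking $r > m_i + \langle\mu^*, \varpi_i^\vee - \operatorname{wt}(\gamma')\rangle$, the Poisson bracket lies in $I_\mu^\lambda$ by Corollary \ref{Poisson brackets} together with the inductive hypothesis, the $\Delta_{v_i^*,\gamma'}^{(r+a)}$ terms lie in $I_\mu^\lambda$ by the same hypothesis, and the $\Delta_{v_i^*,e_{\alpha^\vee}\gamma'}^{(r+a)}$ terms lie in $I_\mu^\lambda$ by the outer induction on weight, where dominance of $\mu^*$ (giving $\langle\mu^*,\alpha^\vee\rangle \geq 0$) is exactly what ensures $r + a$ exceeds the required bound $m_i + \langle\mu^*, \varpi_i^\vee - \operatorname{wt}(\gamma') - \alpha^\vee\rangle$.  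Solving for $\Delta_{v_i^*,\gamma}^{(r+k)}$ and letting $r$ vary yields exactly the range $s = r + k > m_i + \langle\mu^*, \varpi_i^\vee - \operatorname{wt}(\gamma)\rangle$.

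To vary $\beta$ as well, I run a second induction on $\operatorname{wt}(\beta)$ starting from $v_i^*$.  Writing $\beta = e_j\beta'$ for some $\beta'$ of strictly lower weight, I derive an analog of Lemma \ref{lemma: bracket with f_i} by expanding $\{\Delta_{e_j v_j^*, v_j}^{(k+1)}, \Delta_{\beta',\gamma}(v)\}$ via the Poisson bracket formula \eqref{eq: poisson bracket of minors}.  The term indexed by $J_a = f_j,\, J^a = d_j e_j$ there exploits the identity $f_j e_j v_j^* = v_j^*$ to produce a leading contribution proportional to $\Delta_{v_j^*, v_j}(v)\Delta_{e_j\beta',\gamma}(v)$ --- whose coefficient $\Delta_{v_j^*, v_j}(v) \in 1 + v^{-1}\O(\Gm)[[v^{-1}]]$ is invertible on $\Gm$ --- while all other contributions involve either $\Delta_{J^a\beta', \gamma}$ with $J^a\beta'$ of weight $\leq \operatorname{wt}(\beta')$ or $\Delta_{\beta', J^a\gamma}$ with $J^a\gamma$ of weight $\geq \operatorname{wt}(\gamma)$; both types lie in $I_\mu^\lambda$ by earlier steps of the two inductions.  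I expect the main technical obstacle to be in this step: making the analog lemma precise and tracking the degree bookkeeping through the $\beta$-induction.

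For the reverse inclusion $I_\mu^\lambda \subset I$, it suffices to show $J_\mu^\lambda \subset I$.  The starting Poisson generators $\Delta_{v_i^*,v_i}^{(s)}$ for $s > m_i$ lie in $I$ directly; by the Leibniz rule it remains to check, for each generator $\Delta_{\beta,\gamma}^{(s)}$ of $I$ and each algebra generator of $\O(\Gr_\mu)$ listed in Theorem \ref{subalgebra} --- namely $f_j^{(k)}$ for $k > \mu_j$ and $\Delta_{v_j^*,v_j}^{(k)}, \Delta_{e_j v_j^*,v_j}^{(k)}$ for $k > 0$ --- that the Poisson bracket lies in $I$.  Brackets with $f_j^{(k)}$ are controlled by Lemma \ref{lemma: bracket with f_i} and its $\beta$-analog from above, and the indices of the resulting matrix-coefficient terms satisfy the bound defining $I$ by the same dominance argument used in Step 1; brackets with $\Delta_{v_j^*,v_j}^{(k)}$ and $\Delta_{e_j v_j^*,v_j}^{(k)}$ expand via \eqref{eq: poisson bracket of minors} into analogous sums, handled the same way.
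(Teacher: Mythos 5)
Your overall strategy --- set $I$ equal to the proposed ideal, prove $I\subset I_\mu^\lambda$ by producing the generators inductively via Poisson brackets, prove $I_\mu^\lambda\subset I$ by showing $I$ absorbs brackets with $\O(\Gr_\mu)$ --- is exactly the paper's, and your $\gamma$-induction with $k=\mu_j$ in Lemma~\ref{lemma: bracket with f_i}, including the use of dominance of $\mu^*$ to handle the $e_{\alpha^\vee}\gamma'$ terms, matches the paper's argument step for step.

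The one place you diverge from the paper is in the $\beta$-induction, and here your reasoning has a flaw worth flagging. In the Poisson bracket formula~(\ref{eq: poisson bracket of minors}) applied to $\{\Delta_{e_j v_j^*,v_j}(u),\Delta_{\beta',\gamma}(v)\}$, the factor coming from the first argument is evaluated at $u$, so the leading term is $\Delta_{v_j^*,v_j}(u)\,\Delta_{e_j\beta',\gamma}(v)$, not $\Delta_{v_j^*,v_j}(v)\,\Delta_{e_j\beta',\gamma}(v)$. After taking $\res_u(u^k\frac{u^{-1}}{1-u^{-1}v}\,\cdot\,)$ via~(\ref{eq: residue}), this coefficient becomes the truncated polynomial $\left[v^k\Delta_{v_j^*,v_j}(v)\right]_+$, a monic degree-$k$ polynomial in $v$ --- not the full power series $\Delta_{v_j^*,v_j}(v)$. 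So your appeal to invertibility of $\Delta_{v_j^*,v_j}(v)\in 1+v^{-1}\O(\Gm)[[v^{-1}]]$ is not the right observation for isolating $\Delta_{e_j\beta',\gamma}^{(s)}$; if you pursued general $k$ you would instead need a secondary induction on $s$ using monicity of the truncated polynomial. The paper (following \cite[Prop.\ 2.15]{KWWY}) avoids all of this by taking $k=0$: since the bound $s>m_i+\langle\mu^*,\varpi_i^\vee-\operatorname{wt}(\gamma)\rangle$ does not depend on $\operatorname{wt}(\beta)$, there is no degree shift to compensate for when moving from $\beta'$ to $e_j\beta'$, and for $k=0$ the identity collapses to $d_j\,\Delta_{e_j\beta',\gamma}(v)=-\{\Delta_{e_jv_j^*,v_j}^{(1)},\Delta_{\beta',\gamma}(v)\}-d_j\,\Delta_{\beta',e_j\gamma}(v)$, with a constant leading coefficient and only two error terms, both handled by the inductions already in place. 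Your version is salvageable but needlessly complicated; you correctly flagged this step as the likely technical obstacle.

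One cosmetic point: you write $\gamma = f_j\gamma'$ for a single $j$, but a non-highest-weight vector is in general only a \emph{sum} $\gamma=\sum_j f_j\gamma_j$; the paper writes it this way and then concludes by linearity. This does not affect the argument but should be stated.
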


In \cite[Proposition 2.15]{KWWY}, we proved this for the case $ \mu = 0 $.  The current proof follows the same strategy, making use of the previous lemma.
\begin{proof}
Denote the ideal generated by these elements by $\widetilde{I}_\mu^\lambda$.  To begin we show that $\widetilde{I}_\mu^\lambda \subset I_\mu^\lambda$.

 We first prove this claim for elements of the form $\Delta_{v_i^\ast, \gamma}^{(s)}$, proceeding by downward induction on the weight of $\gamma$ (which we may assume is a weight vector).  The base case $\Delta_{v_i^\ast, v_i}^{(s)}$ follows from the definition of $J_\mu^\lambda$.  Now suppose that $\gamma \in V(\varpi_i^\vee)$ is not highest weight, so that $ \gamma = \sum f_j \gamma_j$ for some weight vectors $\gamma_j$ of higher weight than $\gamma$.

By the inductive assumption, $\Delta_{v_i^\ast, \gamma_j}^{(s)}$ and $\Delta_{v_i^\ast, e_{\alpha^\vee} \gamma_j}^{(s)}$ are in $I_\mu^\lambda$ for $s> m_i + \langle \mu^*, \varpi_i^\vee - \text{wt}(\gamma_j)\rangle$. In this case, Lemma \ref{Poisson brackets} implies that $\{ f_j^{(\mu_j+1)}, \Delta_{v_i^\ast, \gamma_j}^{(s)}\} \in I_\mu^\lambda$, as $f_j^{(\mu_j+1)} \in \O(\Gr_\mu)$ by Theorem \ref{subalgebra}.

From the $k = \mu_j$ case of the previous lemma,
$$ v^{\mu_j} d_j^{1/2} \Delta_{v_i^\ast, f_j \gamma_j}(v) = \{ f_j^{(\mu_j+1)}, \Delta_{v_i^\ast, \gamma_j}(v)\} -  (\alpha_j^\vee, \text{wt}(\gamma_j)) p(v) \Delta_{v_i^\ast, \gamma_j}(v) - \sum_{\alpha^\vee}  q_\alpha(v) \Delta_{v_i^\ast, e_{\alpha^\vee} \gamma_j}(v) $$
for some polynomials $p, q_{\alpha^\vee}$ of degree $\mu_j-1$.  Comparing coefficients of powers of $v$,  it follows from the discussion in the previous paragraph that $\Delta_{v_i^\ast, f_j \gamma_j}(v) \in I_\mu^\lambda$ for
$$ s >  m_i + \langle \mu^*,\varpi_i^\vee -\text{wt}(f_j \gamma_j) \rangle  = m_i + \langle \mu^*, \varpi_i^\vee - \text{wt}(\gamma)\rangle $$
and so the same is true of $\Delta_{v_i^\ast, \gamma}^{(s)}$.  The proof that the remainder of the elements $\Delta_{\beta, \gamma}^{(s)}$ lie in the ideal as claimed now proceeds by taking Poisson brackets with elements $\Delta_{e_j v_j^\ast, v_j}^{(1)}$, exactly as in \cite[Proposition 2.15]{KWWY}.

Finally, by Lemma \ref{lemma: Poisson closed} below $\widetilde{I}_\mu^\lambda$ is closed under Poisson brackets with $\O(\Gr_\mu)$.  Since $\widetilde{I}_\mu^\lambda$ contains the Poisson generators of $J_\mu^\lambda$, it follows that $J_\mu^\lambda \subset \widetilde{I}_\mu^\lambda$.  Therefore $I_\mu^\lambda \subset\widetilde{I}_\mu^\lambda $, so we have equality.
\end{proof}

\begin{Lemma} \label{lemma: Poisson closed}
With notation as in the above proof, $\widetilde{I}_\mu^\lambda$ is closed under Poisson brackets with $\O(\Gr_\mu)$. 
\end{Lemma}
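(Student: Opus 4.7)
The plan is to use the Leibniz and Jacobi identities to reduce the statement to a finite number of explicit Poisson bracket computations. By Leibniz the set $\{f \in \O(\Gr_\mu) : \{f, \widetilde{I}_\mu^\lambda\} \subset \widetilde{I}_\mu^\lambda\}$ is a subalgebra of $\O(\Gr_\mu)$, and by Jacobi it is a Poisson subalgebra. Hence by Theorem \ref{subalgebra} it suffices to verify the property for $f$ equal to one of the Poisson generators $\Delta_{v_i^\ast, v_i}^{(s')}$, $\Delta_{e_i v_i^\ast, v_i}^{(s')}$ (for $s' \geq 1$), or $f_j^{(s')}$ (for $s' > \mu_j$). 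Also by Leibniz, it is enough to check these brackets on the generators $\Delta_{\beta, \gamma}^{(s)}$ of $\widetilde{I}_\mu^\lambda$, where $s > B := m_i + \langle \mu^\ast, \varpi_i^\vee - \operatorname{wt}(\gamma)\rangle$.

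For the matrix coefficient generators, I would apply formula (\ref{eq: poisson bracket of minors}) with $(\beta_1, \gamma_1)$ equal to $(v_i^\ast, v_i)$ or $(e_i v_i^\ast, v_i)$ and $(\beta_2, \gamma_2) = (\beta, \gamma)$, then use the formal expansion $(u-v)^{-1} = \sum_{k \geq 0} u^{-k-1} v^k$ to extract the coefficient of $u^{-s'} v^{-s}$. This expresses $\{f, \Delta_{\beta, \gamma}^{(s)}\}$ as a finite sum of products $\Delta_{\beta_1, J_a \gamma_1}^{(p_1)} \Delta_{\beta, J^a \gamma}^{(p_2)}$ and $\Delta_{J_a \beta_1, \gamma_1}^{(p_1)} \Delta_{J^a \beta, \gamma}^{(p_2)}$, with $p_2 \in \{s, s+1, \ldots, s+s'-1\}$ in each summand. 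The key observation is that the threshold for $\Delta_{\beta', \gamma'}^{(p)}$ to lie in $\widetilde{I}_\mu^\lambda$ depends only on the right argument $\gamma'$. In the first family, nonvanishing of $J_a v_i$ forces $J_a$ to be Cartan or a negative root vector, so $J^a$ has weight zero or a positive root; dominance of $\mu^\ast$ gives $\langle \mu^\ast, \operatorname{wt}(J^a) \rangle \geq 0$, and so the threshold for $\Delta_{\beta, J^a \gamma}^{(p_2)}$ is at most $B$, while $p_2 \geq s > B$ puts this factor in $\widetilde{I}_\mu^\lambda$. In the second family, the right argument $\gamma$ is unchanged, so the threshold is exactly $B$ and again $p_2 > B$. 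Thus every summand lies in $\widetilde{I}_\mu^\lambda$.

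For $f = f_j^{(k+1)}$ with $k \geq \mu_j$ and $\beta = v_i^\ast$, Lemma \ref{lemma: bracket with f_i} applies directly. Extracting the coefficient of $v^{-s}$ from the identity there yields three types of contributions: a nonzero scalar multiple of $\Delta_{v_i^\ast, f_j \gamma}^{(s+k)}$, a linear combination of $\Delta_{v_i^\ast, \gamma}^{(s+r)}$ for $0 \leq r \leq k-1$, and a linear combination of $\Delta_{v_i^\ast, e_{\alpha^\vee} \gamma}^{(s+r)}$ for positive roots $\alpha^\vee$ and $0 \leq r \leq k-1$. The leading term lies in $\widetilde{I}_\mu^\lambda$ because $s + k > B + \mu_j$, which follows from $s > B$ and $k \geq \mu_j$. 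The remaining terms lie in the ideal because $s + r \geq s > B$ and $\langle \mu^\ast, \alpha^\vee \rangle \geq 0$. For general weight vector $\beta$, I would re-derive the identity of Lemma \ref{lemma: bracket with f_i} by applying (\ref{eq: poisson bracket of minors}) with $\beta_2 = \beta$; the resulting formula acquires additional terms reflecting the $\g$-action on the left factor (such as $\Delta_{e_{\alpha^\vee} \beta, \gamma}(v)$), but these have $\gamma$ as right argument and are weighted by polynomials in $v$ of degree at most $k-1$, so the same bound analysis applies.

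The main subtlety is identifying the correct invariant to track, namely that the ideal threshold depends only on the right argument of the matrix coefficient. Once this observation is combined with the dominance of $\mu^\ast$ and the hypothesis $k \geq \mu_j$, all the required valuation inequalities follow by direct inspection.
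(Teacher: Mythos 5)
Your proposal is essentially correct and follows the same overall architecture as the paper's proof: reduce by Leibniz/Jacobi to checking brackets of Poisson generators of $\O(\Gr_\mu)$ against ideal generators of $\widetilde{I}_\mu^\lambda$; for those brackets, use the formal-series manipulations built around the residue identity, isolate the right argument $\gamma'$ of each resulting matrix coefficient, and invoke dominance of $\mu^*$ to compare the index against the threshold. Your treatment of the matrix-coefficient generators $\Delta_{v_i^\ast,v_i}^{(r)}$ and $\Delta_{e_i v_i^\ast,v_i}^{(r)}$ matches the paper's. The one genuine divergence is in handling $\{f_j^{(k+1)}, \Delta_{\beta,\gamma}^{(s)}\}$ for a general weight vector $\beta$: the paper runs an induction on $\mathrm{ht}(\beta + \varpi_i^\vee)$, writing $\beta = \sum_j e_j\beta_j$ and commuting $\{f_i^{(s)},\cdot\}$ past $\{\Delta_{e_j v_j^\ast,v_j}^{(1)},\cdot\}$ using the commutator identity from \cite[Theorem 3.9]{KWWY}, so that only the already-established case $\beta = v_i^\ast$ ever needs a direct computation. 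You instead propose to re-derive the analogue of Lemma~\ref{lemma: bracket with f_i} directly for general $\beta$. That route does work, and it is arguably more self-contained since it avoids the external commutator identity, at the cost of redoing the bracket computation; the paper's induction is cleaner once \cite[Theorem 3.9]{KWWY} is granted.

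Two small imprecisions in your general-$\beta$ sketch are worth flagging, though neither breaks the argument. First, since $v_j^\ast$ is a \emph{lowest} weight vector, the additional terms from the second sum in formula~\eqref{eq: poisson bracket of minors} are of the form $\Delta_{f_{\alpha^\vee}\beta,\gamma}(v)$ rather than $\Delta_{e_{\alpha^\vee}\beta,\gamma}(v)$. Second, the coefficient of $\Delta_{f_j\beta,\gamma}(v)$ is in general a degree-$k$ polynomial in $v$, not degree $\leq k-1$: the series $\Delta_{e_j v_j^\ast, f_j v_j}(u)/\Delta_{v_j^\ast,v_j}(u)$ has a nonzero constant term $-\langle h_j,\varpi_j^\vee\rangle$. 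This is harmless because this term has $\gamma$ unchanged as right argument, so its threshold is $B$ and $s+r > B$ already holds for all $r \geq 0$; but the blanket ``degree at most $k-1$'' claim should be dropped and replaced by the weaker (and sufficient) observation that all exponents produced are $\geq s$.
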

\begin{proof}
It suffices to verify that for $x$ one of the generators for $\O(\Gr_\mu)$ from Theorem \ref{subalgebra}, and for $y$ one of the generators of $\widetilde{I}_\mu^\lambda$, we have $\{x,y\} \in \widetilde{I}_\mu^\lambda$.  

We compute $\{ \Delta_{v_i^\ast,v_i}^{(r)}, \Delta_{\beta,\gamma}(v)\}$ as in Lemma \ref{lemma: bracket with f_i}.  Using equation (\ref{eq: poisson bracket of minors}),
\begin{align*} \{ \Delta_{v_i^\ast, v_i}^{(r)}, \Delta_{\beta,\gamma}(v)\} &= \res_u \left( u^{r-1}\frac{u^{-1}}{1-u^{-1}v} \bigg[ (\varpi_i^\vee, \text{wt}(\gamma) + \text{wt}(\beta) )\Delta_{v_i^\ast,v_i}(u) \Delta_{\beta,\gamma}(v) \right.\\
&\phantom{X}+\left. \sum_{{\alpha^\vee} \in \Phi_+^\vee} d_{\alpha^\vee} \Big( \Delta_{v_i^\ast, f_{\alpha^\vee} v_i}(u) \Delta_{\beta, e_{\alpha^\vee} \gamma}(v) - \Delta_{e_{\alpha^\vee} v_i^\ast, v_i}(u) \Delta_{f_{\alpha^\vee} \beta, \gamma}(v) \Big) \bigg] \right)
\end{align*}
Extracting the coefficient of $v^{-s}$ where $s> m_i + \langle \mu^*, \varpi_i^\vee - \text{wt}(\gamma)\rangle$, this expresses $\{\Delta_{v_i^\ast,v_i}^{(r)}, \Delta_{\beta,\gamma}^{(s)}\}$ as an element of $\widetilde{I}_\mu^\lambda$, since the series in $v$ on the right-hand side contribute generators from this ideal.  The case of $x = \Delta_{e_i v_i^\ast, v_i}^{(r)}$ is similar.

An analogous argument using Lemma \ref{lemma: bracket with f_i} shows that $\{f_i^{(s)}, \Delta_{v_i^\ast,\gamma}^{(r)}\} \in \widetilde{I}_\mu^\lambda$ for $s> \mu_i$ and $r> m_i + \langle \mu^*, \varpi_i^\vee - \text{wt}(\gamma)\rangle$.  To show that $\{f_i^{(s)}, \Delta_{\beta,\gamma}^{(r)}\} \in \widetilde{I}_\mu^\lambda$ for all $\beta$, we use induction on $\text{ht}(\beta+\varpi_i^\vee)$.  The base case $\beta = v_i^\ast$ is covered above.  For the inductive step, write $\beta = \sum_j e_j \beta_j$. Then as in the proof of \cite[Proposition 2.15]{KWWY},
$$ \Delta_{\beta,\gamma}^{(r)} = \sum_j \left( d_j^{-1} \{ \Delta_{e_j v_j^\ast, v_j}^{(1)}, \Delta_{\beta_j,\gamma}^{(r)}\} - \Delta_{\beta_j, e_j \gamma}^{(r)} \right) $$
It follows from \cite[Theorem 3.9]{KWWY} that the commutator of $\{f_i^{(s)},\cdot\}$ and $\{\Delta_{e_j v_j^\ast,v_j}^{(1)},\cdot\}$ is $\{\delta_{ij}h_i^{(s)},\cdot\}$, where $h_i^{(s)}$ is a certain polynomial in the leading minors $\Delta_{v_k^\ast,v_k}^{(t)}$.  Commuting these operators lets us apply the inductive hypothesis, proving the claim.
\end{proof}

Now that we have a precise description of $ I^\lambda_\mu $ we are in position to prove Theorem \ref{th:idealofY} and thus Theorem \ref{th:reducedness}.

\begin{proof}[Proof of Theorem \ref{th:idealofY}]
Let $ g $ be an $S$-point of $ \Gm $.  Assume that all the above generators of $ I_\mu^\lambda $ vanish on $ g $.  These conditions imply that $g$ is an $S$-point of $G_1[t^{-1}]$, which we identify with the big cell $ U \subset \Gr$. We would like to show that $ gt^{w_0 \mu} \in \overline{\Y^\lambda}(S) $.  Thus, we need to check condition (iii) appearing in the definition of $ \overline{\Y^\lambda} $.

First, we claim that it suffices to check this third condition when $ \nu^\vee $ is a fundamental weight $ \varpi_i^\vee $. Therefore we must check that for all $i$, the composed map
$$\phi_{\varpi_i^\vee} : \fF^0_G \times^G V(\varpi_i^\vee) \dashedrightarrow \fF_G \times^G V(\varpi_i^\vee) \rightarrow  \fF_G \times^G V(\varpi_i^\vee) \otimes \cO\left(\langle \lambda^*, \varpi_i^\vee \rangle \right)$$ is regular on all of $S \times X$, where $ \phi $ is the image of $ gt^{w_0 \mu} $ under the map $ G((t)) \rightarrow \Gr $.  Now $ gt^{w_0\mu} $ defines a $ \field((t)) \otimes \O(S) $-module map
$$
V(\varpi_i^\vee) \otimes \field((t)) \otimes \O(S) \rightarrow V(\varpi_i^\vee) \otimes \field((t)) \otimes \O(S)
$$
Condition (iii) is equivalent to the condition that the image of $ V(\varpi_i^\vee) \otimes \field[[t]] \otimes \O(S) $ lies in $ V(\varpi_i^\vee) \otimes t^{\langle -\lambda^*, \varpi^\vee_i \rangle} \C[[t]] \otimes \O(S) $.

If we pick a weight vectors $ \gamma \in V(\varpi^\vee_i) $ and $ \beta \in V(\varpi^\vee_i)^* $, we see that
$$ \Delta_{\beta, \gamma}(gt^{w_0\mu}) = t^{\langle w_0 \mu, \operatorname{wt}(\gamma) \rangle}\Delta_{\beta, \gamma}(g) \in t^{-\langle \lambda^*, \varpi_i^\vee \rangle} \field[[t]] $$
since the generators of $ I_\mu^\lambda $ vanish on $ g $.  Thus $ gt^{w_0 \mu}$ maps $ \gamma \otimes 1 \otimes 1 $ into the desired subspace, and since it is a module map, the result follows.

\end{proof}


\begin{thebibliography}{[HKKPTVVZ]}

\bibitem[AM]{AM}
{Atiyah, M. F. and Macdonald, I. G.},
{\em Introduction to commutative algebra}.
{Addison-Wesley Publishing Co., Reading, Mass.-London-Don Mills, Ont.},
      {(1969)},
     {ix+128}.

\bibitem[F]{F}
{Faltings, Gerd},
{``Algebraic loop groups and moduli spaces of bundles''},
{\em J. Eur. Math. Soc. (JEMS)},
{\bf 5},
{(2003)},
{no. 1},
{41--68}.
     
\bibitem[FM]{FM}
{Finkelberg, Michael and Mirkovi{\'c}, Ivan},
{``Semi-infinite flags. {I}. {C}ase of global curve {$\bold P^1$}''},
{\em Differential topology, infinite-dimensional {L}ie algebras, and applications},
{Amer. Math. Soc. Transl. Ser. 2},
{\bf 194},
{81--112},

\bibitem[FL]{Fourier-Littleman}
{Fourier, G. and Littelmann, P.},
{``Tensor product structure of affine {D}emazure modules and limit constructions''}.
{\em Nagoya Math. J.}, {\bf 182}, {(2006)}, {171--198}.

\bibitem[GW]{GW}
{G{\"o}rtz, Ulrich and Wedhorn, Torsten},
{\em Algebraic geometry {I}},
{Advanced Lectures in Mathematics},
{Schemes with examples and exercises},
{Vieweg + Teubner, Wiesbaden},
{(2010)},
{viii+615}.

\bibitem[EGAIV3]{EGAIV3}
{Grothendieck, A.},
{\em \'{E}l\'ements de g\'eom\'etrie alg\'ebrique. {IV}. \'{E}tude locale des sch\'emas et des morphismes de sch\'emas. {III}},
{ Inst. Hautes \'Etudes Sci. Publ. Math.},
{no. 28}, {(1966)}.

\bibitem[Kac]{Kac}
{Kac, Victor G.},
{\em Infinite-dimensional {L}ie algebras},
{Third Edition},
{Cambridge University Press, Cambridge},
{(1990)},
{xxii+400}.

\bibitem[KWWY]{KWWY}
{Kamnitzer, Joel and Webster, Ben and Weekes, Alex and Yacobi, Oded},
{``Yangians and quantizations of slices in the affine {G}rassmannian''},
{\em Algebra Number Theory},
{\bf 8},
{(2014)},
{no. 4},
{857--893}.


\bibitem[Knu]{Knu}
{Knutson, Allen}, {``Some schemes related to the commuting variety''}, 
{\em J. Algebraic Geom.}, {\bf 14}, {(2005)}, {no. 2}, {283--294}.

\bibitem[Kum]{Kum}
{Kumar, Shrawan},
{\em Kac-{M}oody groups, their flag varieties and representation theory},
{Progress in Mathematics},
{\bf 204},
{Birkh\"auser Boston, Inc., Boston, MA},
{(2002)},
{xvi+606}.








\bibitem[KumSch]{KumSch}
{Kumar, Shrawan and Schwede, Karl},
{``Richardson varieties have {K}awamata log terminal singularities''},
{Int. Math. Res. Not. IMRN},
{(2014)},
{no. 3},
{842--864}.

\bibitem[St]{Stembridge}
{Stembridge, John R.},
{``The partial order of dominant weights''},
{Adv. Math.},
{\bf 136},
{(1998)},
{no. 2},
{340--364}.

\bibitem[Zhu]{Zhu}
{Zhu, Xinwen},
{``Affine {D}emazure modules and {$T$}-fixed point subschemes in the affine {G}rassmannian''},
{\em Adv. Math.}, {\bf 221}, {(2009)}, {no. 2}, {570--600}.

\end{thebibliography}
\end{document}